\newtheorem{theorem}{Theorem}
\crefname{theorem}{theorem}{Theorems}
\Crefname{Theorem}{Theorem}{Theorems}
\newaliascnt{lemma}{theorem}
\newtheorem{lemma}[lemma]{Lemma}
\crefname{lemma}{lemma}{lemmas}
\Crefname{Lemma}{Lemma}{Lemmas}
\newaliascnt{corollary}{theorem}
\newtheorem{corollary}[corollary]{Corollary}
\crefname{corollary}{corollary}{corollaries}
\Crefname{Corollary}{Corollary}{Corollaries}
\newaliascnt{proposition}{theorem}
\newtheorem{proposition}[proposition]{Proposition}
\crefname{proposition}{proposition}{propositions}
\Crefname{Proposition}{Proposition}{Propositions}
\newaliascnt{definition}{theorem}
\crefname{definition}{definition}{definitions}
\Crefname{Definition}{Definition}{Definitions}
\newaliascnt{remark}{theorem}
\newtheorem{remark}[remark]{Remark}
\crefname{remark}{remark}{remarks}
\Crefname{Remark}{Remark}{Remarks}
\crefname{example}{example}{examples}
\Crefname{Example}{Example}{Examples}
\crefname{figure}{figure}{figures}
\Crefname{Figure}{Figure}{Figures}
\newtheorem{assumption}{\textbf{H}\hspace{-3pt}}
\Crefname{assumption}{\textbf{H}\hspace{-3pt}}{\textbf{H}\hspace{-3pt}}
\crefname{assumption}{\textbf{H}}{\textbf{H}}
\newtheorem{assumptionL}{\textbf{L}\hspace{-3pt}}
\Crefname{assumptionL}{\textbf{L}\hspace{-3pt}}{\textbf{L}\hspace{-3pt}}
\crefname{assumptionL}{\textbf{L}}{\textbf{L}}
\newtheorem{assumptionG}{\textbf{G}\hspace{-3pt}}
\Crefname{assumptionG}{\textbf{G}\hspace{-3pt}}{\textbf{G}\hspace{-3pt}}
\crefname{assumptionG}{\textbf{G}}{\textbf{G}}
\def\rmw{\mathrm{w}}
\def\measSet{\mathbb{M}}
\def\Setdrift{\mathcal{E}}
\newcommandx{\functionspace}[2][1=+]{\mathbb{F}_{#1}(#2)}
\def\convSym{\mathrm{c}}
\def\StSym{\mathrm{s}}
\newcommandx{\VarDeux}[3][3=]{\operatorname{Var}^{#3}_{#1}\left\{#2 \right\}}
\newcommand{\VarDeuxLigne}[2]{\operatorname{Var}_{#1}\{#2 \}}
\def\bornevar{D_n(\gaStep)}
\newcommand{\1}{\mathbbm{1}}
\newcommand{\B}{\mathcal{B}}
\newcommand{\borelSet}{\mathcal{B}}
\newcommand{\LeftEqNo}{\let\veqno\@@leqno}
\newcommand{\floor}[1]{\left\lfloor #1 \right\rfloor}
\newcommand{\ceil}[1]{\left\lceil #1 \right\rceil}
\newcommand{\N}{\ensuremath{\mathbb{N}}}
\newcommand{\PE}{\mathbb{E}}
\newcommand{\abs}[1]{\left\vert #1 \right\vert}
\newcommand{\tvnorm}[1]{\| #1 \|_{\mathrm{TV}}}
\newcommandx{\Vnorm}[2][1=V]{\| #2 \|_{#1}}
\newcommandx{\VnormEq}[2][1=V]{\left\| #2 \right\|_{#1}}
\newcommandx{\norm}[2][1=]{\ifthenelse{\equal{#1}{}}{\left\Vert #2 \right\Vert}{\left\Vert #2 \right\Vert^{#1}}}
\newcommandx{\normsup}[2][1=]{\ifthenelse{\equal{#1}{}}{\left\Vert #2 \right\Vert_{\infty}}{\left\Vert #2 \right\Vert^{#1}_{\infty}}}
\newcommandx{\normLigne}[2][1=]{\ifthenelse{\equal{#1}{}}{\Vert #2 \Vert}{\Vert #2\Vert^{#1}}}
\newcommand{\parenthese}[1]{\left(#1 \right)}
\newcommand{\parentheseDeux}[1]{\left[ #1 \right]}
\newcommand{\defEns}[1]{\left\lbrace #1 \right\rbrace }
\newcommand{\ps}[2]{\left\langle#1,#2 \right\rangle}
\newcommand{\eqdef}{\overset{\text{\tiny def}} =}
\newcommand{\proba}[1]{\mathbb{P}\left( #1 \right)}
\newcommandx\probaMarkovTilde[2][2=]
\newcommand{\expe}[1]{\PE \left[ #1 \right]}
\newcommand{\bigO}{\ensuremath{\mathcal O}}
\newcommand{\filtrationTilde}{\widetilde{\mathcal{F}}}
\newcommand{\plusinfty}{+\infty}
\newcounter{hypoconbis}
\newcounter{saveconbis}
\newcommand\debutH{\begin{list}
{\textbf{H\arabic{hypoconbis}}}{\usecounter{hypoconbis}}\setcounter{hypoconbis}{\value{saveconbis}}}
\newcommand\finH{\end{list}\setcounter{saveconbis}{\value{hypoconbis}}}
\def\ie{i.e.}
\def\eqsp{\;}
\newcommand{\coint}[1]{\left[#1\right)}
\newcommand{\ocint}[1]{\left(#1\right]}
\newcommand{\ooint}[1]{\left(#1\right)}
\newcommand{\ccint}[1]{\left[#1\right]}
\newcommand{\indi}[1]{\1_{#1}}
\newcommandx{\weight}[2][2=n]{\omega_{#1,#2}^N}
\newcommand{\boule}[2]{\operatorname{B}(#1,#2)}
\newcommand{\ball}[2]{\operatorname{B}(#1,#2)}
\def\TV{\mathrm{TV}}
\def\as{\ensuremath{\text{a.s}}}
\def\rmd{\mathrm{d}}
\newcommandx\sequence[3][2=,3=]
\newcommandx{\sequencen}[2][2=n\in\N]{\ensuremath{(#1)_{#2}}}
\newcommandx\sequenceDouble[4][3=,4=]
\newcommandx{\sequencenDouble}[3][3=n\in\N]{\ensuremath{\{ (#1_{n},#2_{n}), \eqsp #3 \}}}
\newcommand{\wrt}{w.r.t.}
\def\iid{i.i.d.}
\def\rme{\mathrm{e}}
\def\rset{\mathbb{R}}
\def\nset{\mathbb{N}}
\newcommandx{\CPE}[3][1=]{{\mathbb E}^{#3}_{#1}\left[#2 \right]} 
\newcommandx{\CPVar}[3][1=]{\mathrm{Var}^{#3}_{#1}\left\{ #2 \right\}}
\newcommand{\CPP}[3][]
{\ifthenelse{\equal{#1}{}}{{\mathbb P}\left(\left. #2 \, \right| #3 \right)}{{\mathbb P}_{#1}\left(\left. #2 \, \right | #3 \right)}}
\def\generator{\mathscr{A}}
\def\PLang{\PL}
\newcommandx{\osc}[2][1=]{\mathrm{osc}_{#1}(#2)}
\def\bEa{c}
\def\cVEa{\chi}
\def\VEa{\Vconv{\convSym}}
\def\thetaEa{\lambda}
\def\KUa{R_{\convSym}}
\newcommandx{\boundConvTV}[1][1=]{F^{#1}_{n,p}(x;\gamma)}
\def\rhoU{\rhoUl}
\newcommand{\Vconv}[1]{W_{#1}}
\newcommand{\VStD}{W_{\StSym}}
\def\Gammasum{\GaStep}
\def\mcf{\mathcal{F}}
\newcommand{\chunk}[4][]%
{\ifthenelse{\equal{#1}{}}{\ensuremath{{#2}_{#3:#4}}}{\ensuremath{#2^#1}_{#3:#4}}
}
\def\KL{\operatorname{KL}}
\def\Id{\operatorname{Id}}
\def\IdM{\operatorname{I}_d}
\def\Ltwo{\mathrm{L}^2}
\def\Lone{\mathrm{L}^1}
\newcommand\densityPi[1]{\frac{\rmd #1}{\rmd \pi}}
\newcommand\densityPiLigne[1]{\rmd #1 /\rmd \pi}
\newcommand\density[2]{\frac{\rmd #2}{\rmd #1}}
\newcommand\densityLigne[2]{\rmd #2/\rmd #1}
\def\Csetfunction{\mathrm{C}}
\def\Gammabf{\mathbf{\Gamma}}
\def\CPoincare{C_{\mathrm{P}}}
\def\Clogsob{C_{\mathrm{LS}}}
\newcommand\Ent[2]{\mathrm{Ent}_{#1}\left(#2\right)}
\def\bargaStep{\bar{\gaStep}}
\def\Funfo{F}
\def\FunfoD{G}
\def\globmin{\xstar}
\def\VOne{\VEa}
\def\thetaOne{\theta}
\def\KOne{K}
\def\bOne{\beta}
\def\gaStep{\gamma}
\def\GaStep{\Gamma}
\def\RKer{R}
\def\QKer{Q}
\def\VSAlpha{{\Vly_{\varsigma}}}
\def\thetaSAlpha{\theta_{\varsigma}}
\def\KSAlpha{K_{\varsigma}}
\def\bSAlpha{\beta_{\varsigma}}
\def\ConstVnorm{C_{\varsigma}}
\def\rateExpVnorm{\upsilon_{\varsigma}}
\def\ConstVnormUndemi{C_{1/2}}
\def\rateExpVnormUndemi{\upsilon_{1/2}}
\def\ConstVnormUnquart{C_{1/4}}
\def\rateExpVnormUnquart{\upsilon_{1/4}}
\def\Calpha{a_{\alpha}}
\def\Mrho{M_{\rho}}
\def\bDriftRgaStep{c}
\def\KDriftRgaStep{K}
\def\lDriftgaStep{\lambda}
\def\Vlysqrt{\Vly^{1/2}}
\def\Vlyb{\Vly}
\def\bUndemi{\beta_{1/2}}
\def\KUndemi{K_{1/2}}
\def\thetaUndemi{\theta_{1/2}}
\def\kgaStep{\mathrm{k}_\gaStep}
\def\qgaStep{\mathrm{q}_\gaStep}
\def\rgaStep{\mathrm{r}_\gaStep}
\def\pinumber{\uppi}
\def\boreleanA{\mathrm{A}}
\def\boundDeuxTVa{B(\gaStep,V(x))}
\def\boundDeuxTVaccarre{B^2(\gaStep,v)}
\def\boundDeuxTVab{B(\gaStep,1)}
\def\rankP{p_0}
\def\nFunp{n(p)}
\def\BM{B}
\def\AtermProofBloc{A}
\def\BtermProofBloc{B}
\def\LU{L}
\def\PL{P}
\def\YL{Y}
\def\YLtilde{\tilde{Y}}
\def\YLbar{\bar{Y}}
\def\XE{X}
\def\ZE{\mathrm{Z}}
\def\rhoUl{\eta}
\def\RUl{M_{\rhoU}}
\def\VUl{U}
\def\xstar{x^{\star}}
\def\Vly{V}
\def\XSDE{\mathbf{X}}
\def\PSDE{\mathbf{P}}
\def\YSDE{\mathbf{Y}}
\def\ZE{Z}
\def\fune{e}
\def\shift{\mathrm{S}}
\def\RSt{M_{\StSym}}
\def\constSt{m}
\def\constStD{\tilde{m}_{\StSym}}
\def\mStD{\constSt}
\def\RStV{\RSt}
\def\constStV{\constSt}
 \def\time{t}
\def\generatorL{\generator^{\mathrm{L}}}
\def\barB{\bar{B}}
\def\bfPhi{\mathbf{\Phi}}
\newcommand{\PEMb}[2]{\tilde{\mathbb{E}}_{#1}\left[#2 \right]}
\newcommand{\PEMbLigne}[2]{\tilde{\mathbb{E}}_{#1}[#2 ]}
\newcommand{\PPMb}[2]{\tilde{\mathbb{P}}_{#1}\left(#2 \right)}
\newcommand{\2}[1]{\1_{\{#1\}}}
\def\diagSet{\Delta}
\def\tpsRetour{\mathrm{T}}
\def\eqspD{\hspace{0.2mm}}
\def\LF{\LU}
\def\thetaESt{\uplambda}
\def\boundM{\mathrm{K}}
\def\convexSet{\mathsf{K}}
\def\UD{U}
\def\bESt{\mathrm{c}}
\def\VdriftS{\mathrm{W}_{\StSym}}
\def\deltaSS{\delta}
\def\RdriftSS{\mathrm{R}}
\def\Thetadrift{\Theta}
\def\closed{G}
\def\geneLang{\generatorL}
\def\boundMd{\mathrm{D}}
\def\gaStep{\gamma}
\def\Fsmall{\omega}
\def\tauConv{\kappa}
\def\nbTpsR{\ell}
\def\Vdrifta{V}
\def\bdrift{\beta}
\def\thetadrift{\theta}
\def\LUl{\LU}
\def\tauConvSt{\kappa}
\def\Gammasum{\GaStep}
\def\RStD{\tilde{M}_{\StSym}}
\def\Uun{U_1}
\def\Ude{U_2}
\def\kappaS{\varpi}
\def\xstarun{\xstar_1}
\def\xunstar{\xstar_1}
\def\Lun{L_1}
\def\funtestcontrolEntropie{\psi}
\def\sigmagaStep{\sigma_{\gaStep}}
\begin{document}

\begin{frontmatter}

\title{Non-asymptotic convergence analysis for the Unadjusted Langevin Algorithm}
\runtitle{Non-asymptotic convergence analysis for the ULA}

\begin{aug}
\author{\fnms{Alain} \snm{Durmus} \thanksref{e1} \ead[label=e1,mark]{alain.durmus@telecom-paristech.fr}} \and
\author{ \fnms{\'Eric}  \snm{Moulines} \thanksref{e2} \ead[label=e2,mark]{eric.moulines@polytechnique.edu}}

\affiliation{Institut Mines-Télécom ; Télécom ParisTech ; CNRS LTCI }
\address{LTCI, Telecom ParisTech \& CNRS, \\
46 rue Barrault, 75634 Paris Cedex 13, France.\\
\printead{e1}\\}
\affiliation{Centre de Math\'ematiques Appliqu\'ees, UMR 7641, Ecole Polytechnique}
\address{Centre de Math\'ematiques Appliqu\'ees, UMR 7641,\\
 Ecole Polytechnique,\\
 route de Saclay,
 91128 Palaiseau cedex, France.\\
 \printead{e2}
}

\runauthor{A.~Durmus, \'E.~Moulines}
\end{aug}

\begin{abstract}
  :~In this paper, we study a method to sample from a target
  distribution $\pi$ over $\rset^d$ having a positive density with
  respect to the Lebesgue measure, known up to a
  normalisation factor. This method is based on the Euler
  discretization of the overdamped Langevin stochastic differential
  equation associated with $\pi$. For both constant and decreasing
  step sizes in the Euler discretization, we obtain non-asymptotic
  bounds for the convergence to the target distribution $\pi$ in total
  variation distance. A particular attention is paid to the dependency
  on the dimension $d$, to demonstrate the
  applicability of this method in the high dimensional setting.  These
  bounds improve and extend the results of \cite{dalalyan:2014}.
\end{abstract}


\begin{keyword}[class=AMS]
\kwd[primary ]{65C05, 60F05, 62L10}
\kwd[; secondary ]{65C40, 60J05,93E35}
\end{keyword}

\begin{keyword}
\kwd{total variation distance}\kwd{Langevin diffusion} \kwd{Markov Chain Monte Carlo} \kwd{Metropolis Adjusted Langevin Algorithm}
\kwd{Rate of convergence}
\end{keyword}
\end{frontmatter}

\maketitle
\section{Introduction}
Sampling distributions over high-dimensional state-spaces is a
  problem which has recently attracted a lot of research efforts in
  computational statistics and machine learning (see \cite{cotter:roberts:stuart:white:2013} and \cite{andrieu:defreitas:doucet:jordan:2003}
  for details); applications include Bayesian non-parametrics,
  Bayesian inverse problems and aggregation of estimators.  All these
  problems boil down to sample a target distribution $\pi$ having a
  density \wrt\ the Lebesgue measure on $\rset^d$, known up to a
  normalisation factor  $x \mapsto \rme^{-U(x)}/\int_{\rset^d} \rme^{-U(y)}
  \rmd y$ where $U$ is continuously differentiable.  We consider a
sampling method based on the Euler discretization of the overdamped Langevin
stochastic differential equation (SDE)
\begin{equation}
\label{eq:langevin}
\rmd \YL_t = -\nabla U (\YL_t) \rmd t + \sqrt{2}\, \rmd \BM_t^d \eqsp,
\end{equation}
 where $(B_t^d)_{t\geq0}$ is a $d$-dimensional Brownian motion.
 It is well-known that the Markov semi-group associated with the Langevin diffusion $(\YL_t)_{t \geq 0}$ is reversible \wrt~$\pi$. Under suitable conditions, the convergence to $\pi$ takes place at geometric rate. Precise quantitative estimates of the rate of convergence with explicit
dependency on the dimension $d$ of the state space have been recently obtained using either
functional inequalities such as Poincar{\'e} and log-Sobolev inequalities (see \cite{bakry:cattiaux:guillin:2008,cattiaux:guillin:2009} \cite{bakry:gentil:ledoux:2014}) or by coupling techniques (see \cite{eberle:2015}).
The Euler-Maruyama discretization scheme associated to the Langevin diffusion yields the discrete time-Markov chain given by
\begin{equation}
\label{eq:euler-proposal}
\XE_{k+1}= \XE_k - \gaStep_{k+1} \nabla U(\XE_k) + \sqrt{2 \gaStep_{k+1}} \ZE_{k+1}
\end{equation}
where $(\ZE_k)_{k \geq 1}$ is an \iid\ sequence of standard Gaussian $d$-dimensional random vectors and $(\gaStep_k)_{k \geq 1}$ is a sequence of step sizes,
which can either be held constant or be chosen to decrease to $0$.
The idea of using the Markov chain $(\XE_k)_{k \geq 0}$ to sample approximately from the target $\pi$ has  been first introduced in the physics literature by \cite{parisi:1981} and popularised in the computational statistics community by \cite{grenander:1983} and \cite{grenander:miller:1994}. It has been studied in depth by \cite{roberts:tweedie:1996}, which proposed to use a Metropolis-Hastings step at each iteration to enforce reversibility \wrt\ $\pi$ leading to the Metropolis Adjusted Langevin Algorithm (MALA).
They coin the term \emph{unadjusted} Langevin algorithm (ULA) when the Metropolis-Hastings step is skipped.

The purpose of this paper is to study  the convergence of the ULA algorithm. The emphasis is put on non-asymptotic computable bounds; we pay a particular attention to the way these bounds scale with the dimension $d$ and constants characterizing the smoothness and curvature of the potential $U$. Our study covers both constant and decreasing step sizes and we analyse both the "finite horizon"  (where the total number of simulations is specified before running the algorithm) and "any-time" settings (where the algorithm can be stopped after any iteration).

When the step size $\gaStep_k= \gaStep$ is  constant, under appropriate conditions (see \cite{roberts:tweedie:1996}), the Markov chain $(\XE_n)_{n \geq 0} $  is $V$-uniformly geometrically ergodic with a stationary distribution $\pi_\gaStep$. With few exceptions, the stationary distribution $\pi_\gaStep$ is different from the target $\pi$. If the step size $\gaStep$ is small enough, then the stationary distribution of this chain is in some sense close to $\pi$.  We provide non-asymptotic bounds of the $V$-total variation distance between $\pi_\gamma$ and $\pi$, with explicit dependence on the step size $\gaStep$ and the dimension $d$. Our results complete and extend the recent works by \cite{dalalyan:tsybakov:2012} and \cite{dalalyan:2014}.

When $(\gaStep_k)_{k \geq 1}$ decreases to zero, then $(\XE_k)_{k \geq 0}$ is a non-homogeneous Markov chain.
If in addition $\sum_{k=1}^\infty \gaStep_k = \infty$,  we show that the marginal distribution  of this non-homogeneous chain converges, under some mild additional conditions,  to the target distribution $\pi$, and provide explicit bounds for the convergence.
Compared to the related works by \cite{lamberton:pages:2002}, \cite{lamberton:pages:2003}, \cite{lemaire:2005} and \cite{lemaire:menozzi:2010}, we establish not only the weak convergence of the weighted empirical measure of the path to the target distribution  but  a much stronger convergence in total variation, similarly to \cite{dalalyan:2014}, where the strongly log-concave case is considered.

The paper is organized as follows.
In \Cref{sec:analysis-ula-under}, the main convergence results are stated under abstract assumptions. We
then specialize in \Cref{sec:pract-cond-geom} these results to different classes of densities. 
The proofs are gathered in
\Cref{sec:proof}. Some general convergence results for diffusions
based on reflection coupling, which are of independent interest, are
stated in \Cref{sec:quant-conv-bounds}.


\subsection*{Notations and conventions}
$\mathcal{B}(\rset^d)$ denotes the Borel $\sigma$-field of $\rset^d$ and $\functionspace[]{\rset^d}$ the set of all Borel measurable functions on $\rset^d$. For $f \in \functionspace[]{\rset^d}$ set $\Vnorm[\infty]{f}= \sup_{x \in \rset^d} \abs{f(x)}$. Denote by $\measSet(\rset^d)$ the space of finite signed measure on $(\rset^d, \mathcal{B}(\rset^d))$ and $\measSet_0(\rset^d) =  \{\mu \in \measSet(\rset^d)\ | \ \mu(\rset^d) = 0\}$. For $\mu \in \measSet(\rset^d)$  and $f \in \functionspace[]{\rset^d}$ a $\mu$-integrable function, denote by $\mu(f)$ the integral of $f$ \wrt~$\mu$. Let $\Vly: \rset^d \to \coint{1,\infty}$ be a measurable function. For $f \in \functionspace[]{\rset^d}$, the $\Vly$-norm of $f$ is given by $\Vnorm[\Vly]{f}= \sup_{x \in \rset^d} |f(x)|/\Vly(x)$. For $\mu \in \measSet(\rset^d)$, the $\Vly$-total variation distance of $\mu$ is defined as
\begin{equation*}
\Vnorm[\Vly]{\mu} = \sup_{f \in \functionspace[]{\rset^d}, \Vnorm[\Vly]{f} \leq 1}  \abs{\int_{\rset^d } f(x) \rmd \mu (x)} \eqsp.
\end{equation*}
If $\Vly \equiv 1$, then $\Vnorm[\Vly]{\cdot}$ is the total variation  denoted by $\tvnorm{\cdot}$.

For $p \geq 1$, denote by $\mathrm{L}^p(\pi)$ the set of  measurable functions such that $\pi(|f|^p) < \infty$.  For
$f \in \Ltwo(\pi)$, the variance of $f$
under $\pi$ is denoted by $\VarDeux{\pi}{f}$.
For all functions $f$ such that $f \log(f) \in \Lone(\pi)$, the
entropy of $f$ with respect to $\pi$   is defined by
\begin{equation*}
\Ent{\pi}{f} = \int_{\rset^d} f(x)\log(f(x)) \rmd \pi (x) \eqsp.
\end{equation*}
Let $\mu$ and $\nu$ be two probability measures on $\rset^d$. If $\mu
\ll \nu$, we denote by $\densityLigne{\nu}{\mu}$ the Radon-Nikodym
derivative of $\mu$ \wrt~$\nu$. Denote for all $x,y \in \rset^d$ by
$\ps{x}{y}$ the scalar product of $x$ and $y$ and $\norm{x}$ the
Euclidean norm of $x$. For $k \geq 0$, denote by $C^k(\rset^d)$, the
set of $k$-times continuously differentiable functions $f : \rset^d
\to \rset$. For $f \in C^2(\rset^d)$, denote by $\nabla f$ the
gradient of $f$ and $\Delta f$ the Laplacian of $f$.  For all $x \in
\rset^d$ and $M >0$, we denote by $\boule{x}{M}$, the ball centered at
$x$ of radius $M$.  Denote for $K \geq 0$, the oscillation of a
function $f \in C^0(\rset^d)$ in the ball $\boule{0}{K}$ by
$\osc[K]{f}= \sup_{\boule{0}{K}}(f) - \inf_{\boule{0}{K}}(f)$. Denote
the oscillation of a bounded function $f \in C^0(\rset^d)$ on
$\rset^d$ by $\osc[\rset^d]{f}= \sup_{\rset^d}(f) -
\inf_{\rset^d}(f)$.  In the sequel, we take the convention that
$\sum_{p}^n =0$ and $\prod_p ^n = 1$, for $n,p \in \nset$, $n <p$.



\section{General conditions for the convergence of ULA}
\label{sec:analysis-ula-under}
In this section, we derive a bound on the convergence of the ULA to
the target distribution $\pi$ when the Langevin diffusion is
geometrically ergodic and the Markov kernel associated with the EM
discretization satisfies a Foster-Lyapunov drift inequality.

Consider the following assumption on the potential $U$:
\begin{assumptionL}
\label{assum:regularity}
The function $U$ is continuously differentiable on $\rset^d$ and gradient Lipschitz, \ie~there exists $\LU \geq 0$ such that  for all $x,y \in \rset^d$,
$$
\norm{\nabla U(x) - \nabla U(y)} \leq \LU \norm{x-y} \eqsp.
$$
\end{assumptionL}
Under \Cref{assum:regularity},
by  \cite[Theorem 2.4-3.1]{ikeda:watanabe:1989} for every initial point $x \in \rset^d$, there exists a unique strong
solution $(\YL_t(x))_{t \geq 0}$ to the Langevin SDE \eqref{eq:langevin}.
Define for all $t \geq 0$, $x \in \rset^d$ and  $\boreleanA \in \B(\rset^d)$,
$\PL_t(x,\boreleanA) = \proba{\YL_t(x) \in \boreleanA}$. The semigroup $(\PL_t)_{t \geq 0}$ is reversible \wrt~$\pi$, and hence admits $\pi$ as its (unique) invariant distribution.
In this section, we consider the case where $(\PL_t)_{t \geq 0}$ is geometrically ergodic, \ie~there exists  $\kappa \in \coint{0,1}$ such that for  any initial distribution $\mu_0$ and $t > 0$,
\begin{equation}
\label{eq:ergodicity-langevin-expo}
\norm{\mu_0 \PL_t - \pi}_{\TV} \leq C(\mu_0) \kappa^t    \eqsp,
\end{equation}
for some constant $C(\mu_0) \in \ccint{0,\plusinfty}$.
Denote by $\generatorL$ the generator    associated with the  semigroup $(\PL_t)_{t \geq 0}$, given for all $f\in C^2(\rset^d)$ by
\begin{equation*}
\geneLang f = -\ps{\nabla U}{\nabla f} + \Delta f \eqsp.
\end{equation*}
 A twice continuously differentiable  function $\Vly: \rset^d \to \coint{1,\infty}$ is a \emph{Lyapunov function} for the generator $\geneLang$ if there exist $\theta > 0$, $\beta \geq 0$ and $\Setdrift \subset \borelSet$ such that,
\begin{equation}
\label{eq:lyapunov-condition-V}
\geneLang \Vly \leq -\theta \Vly + \beta \1_{\Setdrift} \eqsp.
\end{equation}
By \cite[Theorem~2.2]{roberts:tweedie:1996}, if $\Setdrift$ in \eqref{eq:lyapunov-condition-V} is a non-empty compact set, then the Langevin diffusion is  geometrically ergodic.

Consider now the EM discretization of the diffusion \eqref{eq:euler-proposal}.
Let $(\gaStep_k)_{k \geq 1}$ be a sequence of positive and nonincreasing step sizes and for $0 \leq n \leq p$, denote by
 \begin{equation}
 \label{eq:def_GaStep}
 \GaStep_{n,p} = \sum_{k=n}^p \gaStep_k \eqsp, \qquad \GaStep_n = \GaStep_{1,n} \eqsp.
\end{equation}
For $\gaStep >0$, consider the Markov kernel $\RKer_\gaStep$ given for all $\boreleanA \in \mathcal{B}(\rset^d)$
and $x \in \rset^d$ by
\begin{equation*}
\RKer_\gaStep(x,\boreleanA) =
\int_\boreleanA (4\uppi \gaStep)^{-d/2} \exp \parenthese{-(4 \gaStep)^{-1}\norm[2]{y-x+ \gaStep \nabla U(x)}} \rmd y
\eqsp.
\end{equation*}
The discretized Langevin diffusion $(\XE_n)_{n \geq 0}$ given in \eqref{eq:euler-proposal} is a  time-inhomogeneous
Markov chain, for $p \geq n \geq 1$ and $f \in  \functionspace[+]{\rset^d}$,  $\CPE{f(\XE_p)}{\mcf_n}= \QKer^{n,p}_\gaStep  f(\XE_n)$ where $\mcf_n= \sigma(\XE_\ell, 0 \leq \ell \leq n)$ and
\begin{equation*}
\QKer^{n,p}_\gaStep = \RKer_{\gaStep_n} \cdots \RKer_{\gaStep_p}  \eqsp, \qquad \QKer^{n}_\gaStep = \QKer^{1,n}_\gaStep \eqsp,
\end{equation*}
with the convention that for $n,p \geq 0$, $n < p$, $\QKer^{p,n}_\gaStep$ is the identity operator.
Under \Cref{assum:regularity}, the Markov kernel $\RKer_\gaStep$ is strongly Feller, irreducible and strongly aperiodic. We will say that a function $V: \rset^d \to \coint{1,\infty}$  satisfies a Foster-Lyapunov drift condition for $\RKer_\gaStep$ if  there exist constants $\bar{\gaStep} >0$, $\lambda \in \coint{0,1}$ and $c > 0$ such that, for all $\gaStep \in \ocint{0,\bar{\gaStep}}$
\begin{equation}
\label{eq:foster-lyapunov-drift}
\RKer_\gaStep V \leq \lambda^\gaStep V + \gaStep c  \eqsp.
\end{equation}
The particular form of \eqref{eq:foster-lyapunov-drift} reflects how
the mixing rate of the Markov chain depends upon the step size $\gaStep
> 0$. If $\gaStep= 0$, then $\RKer_0(x,\boreleanA)= \delta_x(\boreleanA)$ for $x \in
\rset^d$ and $\boreleanA \in \mathcal{B}(\rset^d)$. A Markov chain with
transition kernel $\RKer_0$ is not mixing. Intuitively, as $\gaStep$
gets larger, then it is expected that the mixing of
$\RKer_\gaStep$ increases.  If for some $\gaStep > 0$, $\RKer_\gaStep$ satisfies
\eqref{eq:foster-lyapunov-drift}, then $\RKer_\gaStep$
admits a unique stationary distribution $\pi_\gaStep$.
We  use \eqref{eq:foster-lyapunov-drift}  to control quantitatively the moments of the time-inhomogeneous chain.
The types of bounds which are needed, are summarised in the following elementary Lemma.
\begin{lemma}
\label{lem:bound_inho_moment}
Let $\bar{\gaStep} >0$. Assume that for all $x \in \rset^d$ and $\gaStep \in \ocint{0,\bar{\gaStep}}$, \eqref{eq:foster-lyapunov-drift} holds for some constants $\lambda \in \ooint{0,1}$ and $c >0$. Let $(\gaStep_k)_{k \geq 1}$ be a sequence of nonincreasing step sizes such that
$\gaStep_k \in \ocint{0,\bar{\gaStep}}$ for all $k \in \nset^*$. Then for all $n \geq 0$ and $x \in \rset^d$, $  \QKer_{\gaStep}^n \Vly(x) \leq \Funfo(\lambda,\Gammasum_{n},c,\gaStep_1,\Vly(x))$ where
\begin{equation}
\label{eq:def_Funfo}
\Funfo(\lambda, a, c, \gaStep, w) =  \lambda^{a}w+  c(-\lambda^{\gaStep} \log(\lambda))^{-1} \eqsp.
\end{equation}
\end{lemma}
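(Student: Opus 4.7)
The plan is to unroll the drift inequality \eqref{eq:foster-lyapunov-drift} through the $n$ composed kernels by induction, and then estimate the resulting geometric-like sum by a comparison with an integral.

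First I would prove by induction on $n \geq 0$ that
$$
\QKer_\gaStep^n \Vly(x) \leq \lambda^{\Gammasum_n} \Vly(x) + c \sum_{k=1}^n \gaStep_k \, \lambda^{\Gammasum_{k+1,n}},
$$
with the usual convention $\Gammasum_{n+1,n} = 0$. The base case $n=0$ is immediate, since by the conventions stated in the notations $\Gammasum_0 = 0$ and the empty sum is $0$. For the induction step, I would write $\QKer_\gaStep^n = \QKer_\gaStep^{n-1} \RKer_{\gaStep_n}$, apply \eqref{eq:foster-lyapunov-drift} to obtain $\RKer_{\gaStep_n} \Vly \leq \lambda^{\gaStep_n} \Vly + \gaStep_n c$, and use that $\QKer_\gaStep^{n-1}$ is a Markov kernel (hence monotone on positive functions and preserving constants) to conclude $\QKer_\gaStep^n \Vly \leq \lambda^{\gaStep_n} \QKer_\gaStep^{n-1} \Vly + \gaStep_n c$. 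Plugging in the induction hypothesis and using $\gaStep_n + \Gammasum_{n-1} = \Gammasum_n$ and $\gaStep_n + \Gammasum_{k+1,n-1} = \Gammasum_{k+1,n}$ delivers the claimed unrolling.

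Next I would bound the sum $S_n := \sum_{k=1}^n \gaStep_k \lambda^{\Gammasum_{k+1,n}}$ by $(-\lambda^{\gaStep_1} \log \lambda)^{-1}$. Writing $\Gammasum_{k+1,n} = \Gammasum_{k,n} - \gaStep_k$, we have $\lambda^{\Gammasum_{k+1,n}} = \lambda^{-\gaStep_k} \lambda^{\Gammasum_{k,n}}$. Because $(\gaStep_k)$ is nonincreasing and $\lambda \in \ooint{0,1}$, it follows that $\lambda^{-\gaStep_k} \leq \lambda^{-\gaStep_1}$, so $\gaStep_k \lambda^{\Gammasum_{k+1,n}} \leq \lambda^{-\gaStep_1} \gaStep_k \lambda^{\Gammasum_{k,n}}$. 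Since $t \mapsto \lambda^t$ is decreasing, $\gaStep_k \lambda^{\Gammasum_{k,n}} \leq \int_{\Gammasum_{k+1,n}}^{\Gammasum_{k,n}} \lambda^t \rmd t$, and telescoping over $k = 1,\dots,n$ yields $\sum_{k=1}^n \gaStep_k \lambda^{\Gammasum_{k,n}} \leq \int_0^{\Gammasum_n} \lambda^t \rmd t = (1 - \lambda^{\Gammasum_n})/(-\log \lambda) \leq (-\log \lambda)^{-1}$. Combining, $S_n \leq \lambda^{-\gaStep_1}/(-\log \lambda) = (-\lambda^{\gaStep_1} \log \lambda)^{-1}$, which matches the definition of $\Funfo$ in \eqref{eq:def_Funfo}.

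The computation is essentially mechanical; the only subtle point — and the reason why $\gaStep_1$ appears inside the constant term — is the extraction of the factor $\lambda^{-\gaStep_k}$ from $\lambda^{\Gammasum_{k+1,n}}$, which must be uniformly bounded using monotonicity of $(\gaStep_k)$ before the integral comparison can be applied.
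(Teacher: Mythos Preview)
Your proof is correct. The induction step yielding $\QKer_\gaStep^n \Vly(x) \leq \lambda^{\Gammasum_n} \Vly(x) + c \sum_{k=1}^n \gaStep_k \lambda^{\Gammasum_{k+1,n}}$ is identical to the paper's. The only difference is in how the sum is controlled: the paper bounds each factor $\lambda^{\gaStep_j} \leq 1 + \lambda^{\gaStep_1}\log(\lambda)\gaStep_j$ (using $\lambda^t = 1 + \int_0^t \lambda^s\log\lambda\,\rmd s$ and monotonicity of $(\gaStep_k)$), turns the sum into a telescoping product, and reads off the same constant $(-\lambda^{\gaStep_1}\log\lambda)^{-1}$. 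Your route instead extracts $\lambda^{-\gaStep_k} \leq \lambda^{-\gaStep_1}$ and compares the remaining Riemann sum to $\int_0^{\Gammasum_n}\lambda^t\,\rmd t$. Both tricks are elementary and yield exactly the same bound; your integral comparison is arguably slightly more direct, while the paper's telescoping product avoids writing an integral altogether. Neither approach offers a real advantage over the other.
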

\begin{proof}
  The proof is postponed to \Cref{sec:proof:lem:inh}.
\end{proof}
Note that \Cref{lem:bound_inho_moment} implies that $\sup_{k \geq 0} \{ \QKer_{\gaStep}^k \Vly(x) \} \leq \FunfoD(\lambda,c,\gaStep_1,\Vly(x))$ where
\begin{equation}
  \label{eq:def_FunfoD}
  \FunfoD(\lambda,c,\gaStep,w) = w + c(-\lambda^{\gaStep} \log(\lambda))^{-1} \eqsp.
\end{equation}
We give below the main ingredients which are needed to obtain a quantitative bound for $\tvnorm{\delta_x \QKer_{\gaStep}^p -\pi}$ for all $x \in \rset^d$.
This quantity is decomposed as follows: for all $0 \leq n < p$,
\begin{multline}
\label{eq:triangle_ineq_TV}
\tvnorm{\delta_x \QKer_{\gaStep}^p -\pi} \\
\leq  \tvnorm{\delta_x \QKer_{\gaStep}^n \QKer_{\gaStep}^{n+1,p} - \delta_x \QKer^n_\gaStep \PL_{\GaStep_{n+1,p}}} + \tvnorm{\delta_x \QKer^n_\gaStep \PL_{\GaStep_{n+1,p}} - \pi}
 \eqsp.
\end{multline}
To control the first term on the right hand side, we use a method
introduced in \cite{dalalyan:tsybakov:2012} and elaborated in
\cite{dalalyan:2014}. The second term is bounded using the convergence of
the semi-group to $\pi$, see \eqref{eq:ergodicity-langevin-expo}.
\begin{proposition}
  \label{propo:girsanov_comparison}
  Assume that \Cref{assum:regularity} and
  \eqref{eq:ergodicity-langevin-expo} hold. Let $(\gaStep_k)_{k \geq 0}$ be a sequence of nonnegative
  step sizes. Then for all $x \in \rset^d$, $n \geq 0$, $p \geq 1$, $n < p$,
\begin{multline}
  \label{eq:eq_base}
  \tvnorm{\delta_x \QKer_{\gaStep}^p -\pi} \\
\leq
2^{-1/2} \LU \parenthese{ \sum_{k=n}^{p-1} \defEns{(\gaStep_{k+1}^3/3) A(\gaStep,x) + d \gaStep_{k+1}^2}}^{1/2}
+ C(\delta_x \QKer_{\gaStep}^n) \kappa^{\GaStep_{n+1,p}} \eqsp,
\end{multline}
where $\kappa, C(\delta_x \QKer_{\gaStep}^n)$ are defined in \eqref{eq:ergodicity-langevin-expo} and
\begin{equation}
\label{eq:eq_base1}
A(\gaStep,x) = \sup_{k \geq 0}  \int_{\rset^d}\norm[2]{\nabla U(z)} \QKer_{\gaStep}^{k}(y,\rmd z) \eqsp.
\end{equation}
\end{proposition}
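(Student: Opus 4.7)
The plan is to control the two terms in the triangle inequality \eqref{eq:triangle_ineq_TV} separately. The second term is handled immediately by applying the geometric ergodicity \eqref{eq:ergodicity-langevin-expo} with initial distribution $\mu_0 = \delta_x \QKer_\gaStep^n$, which gives exactly $C(\delta_x \QKer_\gaStep^n) \kappa^{\GaStep_{n+1,p}}$.

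For the first term, the strategy is to realize $\delta_x \QKer_\gaStep^p$ and $\delta_x \QKer_\gaStep^n \PL_{\GaStep_{n+1,p}}$ as time-$T$ marginals (with $T = \GaStep_{n+1,p}$) of two continuous-time processes driven by a common Brownian motion $(B_t)_{t \in \ccint{0,T}}$ and started from the common law $\delta_x \QKer_\gaStep^n$. Set $t_0 = 0$ and $t_k = \sum_{j=1}^k \gaStep_{n+j}$ for $1 \leq k \leq p-n$, and write $\underline t = t_k$ whenever $t \in \coint{t_k, t_{k+1}}$. Define
\begin{equation*}
\rmd \tilde Y_t = -\nabla U(\tilde Y_{\underline t})\, \rmd t + \sqrt{2}\, \rmd B_t,
\qquad
\rmd Y_t = -\nabla U(Y_t)\, \rmd t + \sqrt{2}\, \rmd B_t.
\end{equation*}
A short induction shows that $\tilde Y_{t_k}$ is distributed as $\delta_x \QKer_\gaStep^{n+k}$, so $\loi(\tilde Y_T) = \delta_x \QKer_\gaStep^p$ and $\loi(Y_T) = \delta_x \QKer_\gaStep^n \PL_T$.

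By the data processing inequality, the TV distance between these two marginals is bounded by the TV distance between the path laws on $C(\ccint{0,T};\rset^d)$. With the paper's convention $\tvnorm{\mu-\nu} = \sup_{\normsup{f}\leq 1} \abs{\mu(f) - \nu(f)}$, Pinsker's inequality gives $\tvnorm{\cdot} \leq \sqrt{2\KL(\loi(\tilde Y_{0:T}) \mid \loi(Y_{0:T}))}$. Since both SDEs share the diffusion coefficient $\sqrt{2}$, Girsanov's theorem yields
\begin{equation*}
\KL(\loi(\tilde Y_{0:T}) \mid \loi(Y_{0:T})) = \tfrac{1}{4}\PE\left[\int_0^T \norm[2]{\nabla U(\tilde Y_t) - \nabla U(\tilde Y_{\underline t})}\, \rmd t\right].
\end{equation*}
Using \Cref{assum:regularity} to bound the integrand by $\LU^2 \norm[2]{\tilde Y_t - \tilde Y_{\underline t}}$ together with the explicit Euler representation $\tilde Y_t - \tilde Y_{\underline t} = -(t-\underline t)\nabla U(\tilde Y_{\underline t}) + \sqrt{2}(B_t - B_{\underline t})$ gives $\PE[\norm[2]{\tilde Y_t - \tilde Y_{\underline t}}] \leq (t-\underline t)^2 \PE[\norm[2]{\nabla U(\tilde Y_{\underline t})}] + 2d(t-\underline t)$. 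Integrating over $\ccint{t_k, t_{k+1}}$ contributes $(\gaStep_{n+k+1}^3/3)\PE[\norm[2]{\nabla U(\tilde Y_{t_k})}] + d\gaStep_{n+k+1}^2$; bounding the expectation by $A(\gaStep,x)$ from \eqref{eq:eq_base1} and summing over $k=0,\dots,p-n-1$ reindexes into the sum appearing in \eqref{eq:eq_base}. The prefactor $2^{-1/2}\LU$ comes from combining the factor $\LU^2/4$ in the Girsanov identity with the factor $\sqrt{2}$ from Pinsker.

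The main technical obstacle is the rigorous application of Girsanov: under \Cref{assum:regularity}, $\norm{\nabla U(\cdot)}$ grows at most linearly, so the second moments of $\tilde Y$ and $Y$ remain finite on $\ccint{0,T}$, and a standard localization argument (or a Novikov-type condition after an exponential moment bound on $\tilde Y$) ensures that the Girsanov exponential is a true martingale on $\ccint{0,T}$, justifying the KL identity above.
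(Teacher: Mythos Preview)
Your proof is correct and follows essentially the same approach as the paper: split via the triangle inequality \eqref{eq:triangle_ineq_TV}, handle the second term by \eqref{eq:ergodicity-langevin-expo}, and for the first term couple the Langevin diffusion with the continuously-interpolated Euler scheme on a common Brownian motion, apply Girsanov to obtain the $\tfrac14\int\PE\norm[2]{\nabla U(\tilde Y_t)-\nabla U(\tilde Y_{\underline t})}\,\rmd t$ bound on the KL divergence, then Pinsker. The only cosmetic difference is that the paper fixes the starting point $y$, bounds $\tvnorm{\delta_y \QKer_\gaStep^{n+1,p}-\delta_y\PL_{\GaStep_{n+1,p}}}$, and then integrates against $\delta_x\QKer_\gaStep^n(\rmd y)$, whereas you start both processes directly from the random law $\delta_x\QKer_\gaStep^n$; the resulting estimates are identical.
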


\begin{proof}
The proof follows the same lines as  \cite[Lemma 2]{dalalyan:2014} but is given for completeness.
For $0 \leq s \leq t$, let $\Csetfunction(\ccint{s,t},\rset^d)$ be  the space of continuous functions on $\ccint{s,t}$ taking values in $\rset^d$.
For all  $y \in \rset^d$, denote by $\mu_{n,p}^y$ and $\bar{\mu}_{n,p}^y$ the laws on $\Csetfunction(\ccint{\GaStep_n,\GaStep_p},\rset^d)$ of the Langevin diffusion $(\YL_t(y))_{\GaStep_n \leq t \leq \GaStep_p}$ and of the continuously-interpolated Euler discretization $(\YLbar_t(y))_{\GaStep_n \leq t \leq \GaStep_p}$, both started at $y$ at time $\GaStep_n$.
Denote by $(Y_t(y),\overline{Y}_t(y))_{t \geq \GaStep_n}$ the unique strong solution started at $(y,y)$ at time $t=\Gamma_n$
of  the time-inhomogeneous diffusion   defined  for $t \geq \GaStep_n$,  by
\begin{equation}
\label{eq:definition_couplage}
\begin{cases}
\rmd\YL_t = - \nabla U(\YL_t) \rmd t + \sqrt{2} \rmd B_t^d  \\
\rmd \YLbar_t= - \overline{\nabla U}(\YLbar,t) \rmd t + \sqrt{2} \rmd B_t^d \eqsp,
\end{cases}
\end{equation}
where for any continuous function $\rmw: \rset_+ \to \rset^{d}$  and $t \geq \Gamma_n$
\begin{equation}
\label{eq:definition-nablaU-discret}
\overline{\nabla U}(\rmw,t)= \sum_{k=n}^{\infty} \nabla U(\rmw_{\GaStep_k}) \indi{\coint{\GaStep_k,\GaStep_{k+1}}}(t) \eqsp.
\end{equation}

Girsanov's Theorem \cite[Theorem~5.1, Corollary~5.16, Chapter~3]{karatzas:shreve:1991} shows that $\mu_{n,p}^y$ and $\bar{\mu}_{n,p}^y$ are mutually absolutely continuous and in addition, $\bar{\mu}_{n,p}^y$-almost surely
\begin{multline}
\label{eq:girsanov_base}
\density{\bar{\mu}^y_{n,p}}{\mu^y_{n,p}}= \exp \left(\frac{1}{2} \int_{\GaStep_n}^{\GaStep_p}
  \ps{\nabla U(\YLbar_s(y)) - \overline{\nabla U}(\YLbar(y),s)}{\rmd \YLbar_s(y)} \right.  \\
\left. - \frac{1}{4} \int_{\GaStep_n}^{\GaStep_p}  \left\{ \norm[2]{\nabla U(\YLbar_s(y))} - \norm[2]{\overline{\nabla U}(\YLbar(y),s)} \right\} \rmd s \right) \eqsp.
\end{multline}
Under \Cref{assum:regularity},
\eqref{eq:girsanov_base} implies for all $y \in \rset^d$:
\begin{align}
\nonumber
\KL(\mu_{n,p}^y \vert \bar{\mu}_{n,p}^y)
&\leq 4^{-1} \int_{\GaStep_n}^{\GaStep_p} \expe{\norm[2]{\nabla U(\YLbar_s(y)) - \overline{\nabla U}(\YLbar(y),s)}} \rmd s \\
\nonumber
& \leq 4^{-1} \sum_{k=n}^{p-1} \int_{\GaStep_k}^{\GaStep_{k+1}} \expe{\norm[2]{\nabla U(\YLbar_s(y)) -\nabla U(\YLbar_{\GaStep_k}(y))}} \rmd s\\
\label{eq:pinsker_girsanov_2}
&\leq 4^{-1} \LU^2 \sum_{k=n}^{p-1} \defEns{(\gaStep_{k+1}^3/3) \int_{\rset^d}\norm[2]{\nabla U(z)} \QKer_{\gaStep}^{n+1,k}(y,\rmd z) + d \gaStep_{k+1}^2} \eqsp.
\end{align}
By the Pinsker inequality, $\tvnorm{\delta_y \QKer_{\gaStep}^{n+1,p} - \delta_y \PL_{\GaStep_{n+1,p}} } \leq \sqrt{2}\{\KL(\mu_{n,p}^y \vert \bar{\mu}_{n,p}^y)\}^{1/2}$.
The proof is concluded by combining this inequality, \eqref{eq:pinsker_girsanov_2} and \eqref{eq:ergodicity-langevin-expo} in \eqref{eq:triangle_ineq_TV}.
\end{proof}
In the sequel, depending on the conditions on the potential $U$ and the techniques of proof,
for any given $x \in \rset^d$, $C(\delta_x \QKer_{\gaStep}^n)$ can
have two kinds of upper bounds, either of the form
$-\log(\gaStep_n)W(x)$, or $\exp(a \Gamma_n) W(x)$, for some function
$W :\rset^d \to \rset$ and $a > 0$. In both cases, as shown in
\Cref{propo:limit-zeros-tv-1}, it is possible to choose $n$ as a
function of $p$, so that $\lim_{p \to \plusinfty} \tvnorm{\delta_x
  \QKer_{\gaStep}^p-\pi}=0$ under appropriate conditions on the
sequence of step sizes $(\gaStep_{k})_{k \geq 1}$.
\begin{proposition}
\label{propo:limit-zeros-tv-1}
Assume that \Cref{assum:regularity} and \eqref{eq:ergodicity-langevin-expo} hold. Let $(\gaStep_k)_{ k\geq 1}$ be a
nonincreasing sequence satisfying $\lim_{k \to \plusinfty} \GaStep_k
=\plusinfty$ and  $\lim_{k \to \infty} \gaStep_k= 0$.
 Then, $\lim_{n \to \infty} \tvnorm{\delta_x \QKer_{\gaStep}^n -
  \pi}=0$ for any $x \in \rset^d$ for which one of the two following conditions holds:
\begin{enumerate}[label=(\roman*)]
\item
\label{propo:limit-zeros-tv-1a}
$ A(\gaStep,x) < \infty$ and $ \limsup_{n \to \plusinfty} C(\delta_x
\QKer_{\gaStep}^n)/ (-\log(\gaStep_n)) < \plusinfty$, where $A(\gaStep,x)$ is defined in \eqref{eq:eq_base1}.
\item
\label{propo:limit-zeros-tv-1b}
 $ \sum_{k =1}^{\infty} \gaStep_k^2 <
\plusinfty$, $A(\gaStep,x) < \infty$ and $\limsup_{n \to \plusinfty} \log\{C(\delta_x
\QKer_{\gaStep}^n) \}/ \GaStep_n < \plusinfty$.
\end{enumerate}
\end{proposition}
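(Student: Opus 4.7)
The plan is to start from Proposition \ref{propo:girsanov_comparison} and, for each $p$, select the cutoff index $n=n(p)<p$ so that both the Girsanov discretization error and the residual semigroup term tend to $0$ as $p\to\infty$. The tension is clear: enlarging $n$ shrinks the discretization sum but shortens $\Gamma_{n+1,p}$, slowing the geometric decay $\kappa^{\Gamma_{n+1,p}}$. Because $(\gaStep_k)$ is nonincreasing, the elementary inequality $\sum_{k=n}^{p-1}\gaStep_{k+1}^2 \leq \gaStep_{n+1}\Gamma_{n+1,p}$ (together with the analogous bound for the cubic terms, using $A(\gaStep,x)<\infty$) handles the first term in case (i), while under the square-summability hypothesis of case (ii) the sharper tail bound $\sum_{k=n}^{p-1}\gaStep_{k+1}^2 \leq \sum_{k\geq n+1}\gaStep_k^2$ vanishes as $n\to\infty$ uniformly in $p$.

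For case (i), the hypothesis provides $C(\delta_x\QKer_\gaStep^n)\leq c_1(-\log\gaStep_n)$ for $n$ large, and I would take $n(p)$ to be the largest $n<p$ satisfying $\Gamma_{n+1,p}\geq -2\log\gaStep_n/\log(1/\kappa)$. Since $n\mapsto\Gamma_{n+1,p}$ is nonincreasing while $n\mapsto -\log\gaStep_n$ is nondecreasing, such $n(p)$ is well defined, and $n(p)\to\infty$ because for any fixed $N$, $\Gamma_{N+1,p}\to\infty$ as $p\to\infty$ forces $n(p)\geq N$. With this choice $\kappa^{\Gamma_{n(p)+1,p}}\leq\gaStep_{n(p)}^2$, so the semigroup term is at most of order $-\gaStep_{n(p)}^2\log\gaStep_{n(p)} \to 0$; meanwhile, by maximality of $n(p)$, $\Gamma_{n(p)+1,p}=O(-\log\gaStep_{n(p)+1})$, so the discretization term is $O(\sqrt{-\gaStep_{n(p)+1}\log\gaStep_{n(p)+1}}) \to 0$.

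For case (ii), the hypothesis yields $\log C(\delta_x\QKer_\gaStep^n)\leq a\Gamma_n+O(1)$ for some $a>0$. I would fix $\theta\in(0,1)$ with $\theta(a+\log(1/\kappa))<\log(1/\kappa)$ and set $n(p)=\max\{n\leq p:\Gamma_n\leq\theta\Gamma_p\}$. Then $n(p)\to\infty$ because $\Gamma_{n(p)+1}>\theta\Gamma_p\to\infty$ and $(\Gamma_k)$ diverges. Using $\Gamma_{n+1,p}=\Gamma_p-\Gamma_n$, the residual term is bounded above by $\exp(a\Gamma_{n(p)}-\Gamma_{n(p)+1,p}\log(1/\kappa))\leq \exp(-[\log(1/\kappa)-\theta(a+\log(1/\kappa))]\Gamma_p)$, which decays exponentially in $\Gamma_p$, while the discretization term vanishes by the square-summability estimate above. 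The main obstacle is identifying these balanced choices of $n(p)$; once they are made, the rest reduces to routine estimates based on $\gaStep_k\to 0$ and $\Gamma_k\to\infty$.
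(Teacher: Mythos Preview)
Your proposal is correct and follows the same overall strategy as the paper: invoke Proposition~\ref{propo:girsanov_comparison} and, for each $p$, select a cutoff index $n(p)$ so that both the Girsanov discretization sum and the residual semigroup term vanish as $p\to\infty$. The only differences are in the particular choices of $n(p)$: for part~(i) the paper defines $n(p)=\min\{k<p:\kappa^{\Gamma_{k+1,p}}>\gamma_{k+1}\}$, which is essentially the same crossing point you identify but approached from the other side; for part~(ii) the paper takes $n(p)=\lfloor\log\Gamma_p\rfloor$ (so $\Gamma_{n(p)}\leq\gamma_1\log\Gamma_p$ is logarithmically small relative to $\Gamma_p$), whereas you take $\Gamma_{n(p)}$ to be a sufficiently small constant fraction of $\Gamma_p$. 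Both choices work for the same reason, and the remaining estimates in either argument are routine consequences of $\gamma_k\to 0$, $\Gamma_k\to\infty$, and (in~(ii)) $\sum_k\gamma_k^2<\infty$.
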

\begin{proof}
\begin{enumerate}[label=(\roman*),wide=0pt, labelindent=\parindent]
\item
There exists $\rankP \geq 1$ such that for all $p \geq \rankP$, $\kappa^{\gaStep_p} > \gaStep_p$ and $\kappa^{\GaStep_p} \leq \gamma_1$. Therefore, we can define for all $p \geq \rankP$,
\begin{equation}
\label{eq:def_nFunp}
\nFunp \eqdef \min \defEns{k \in \defEns{0,\cdots,p-1} | \kappa^{\GaStep_{k+1,p}} >  \gaStep_{k+1}} \eqsp.
\end{equation}
and $\nFunp \geq 1$. We first show that $ \liminf_{p \to \infty} \nFunp= \infty$. The proof goes by contradiction. If  $ \liminf_{p \to \infty} \nFunp < \infty$ we could extract a bounded subsequence $(n(p_k))_{k \geq 1}$. For such sequence, $(\gaStep_{n(p_k)+1})_{k \geq 1}$ is
bounded away from $0$,
but $\lim_{k \to \plusinfty} \kappa^{\GaStep_{n(p_k)+1,p_k}} = 0$ which yields to a contradiction.
The definition of $\nFunp$ implies that $\kappa^{\GaStep_{\nFunp,p}} \leq \gaStep_{\nFunp}$, showing that
\begin{multline*}
\limsup_{p \to \plusinfty} C(\delta_x \QKer_{\gaStep}^{\nFunp}) \kappa^{\GaStep_{\nFunp,p}}\\ \leq \limsup_{p \to \plusinfty} \frac{C(\delta_x
\QKer_{\gaStep}^{\nFunp})}{ -\log(\gaStep_{\nFunp})} \,  \limsup_{p \to \plusinfty} \defEns{\gaStep_{\nFunp}(-\log(\gaStep_{\nFunp}))}= 0  \eqsp.
\end{multline*}
On the other hand, since $(\gaStep_k)_{k \geq 1}$ is nonincreasing, for any $\ell \geq 2$,
\[
\sum_{k=\nFunp+1}^{p} \gaStep_k^\ell \leq \gaStep_{\nFunp+1}^{\ell-1} \GaStep_{\nFunp+1,p} \leq \gaStep_{\nFunp+1}^{\ell-1} \log(\gaStep_{\nFunp+1})/\log(\kappa) \eqsp.
\]
The proof follows from \eqref{eq:eq_base} using $\lim_{p \to \infty} \gamma_{\nFunp}= 0$.
\item
  For all $p \geq 1$, define $n(p)=
  \max(0,\floor{\log(\GaStep_p)})$. Note that since $\lim_{k \to \plusinfty}
  \GaStep_k = \plusinfty$, we have  $\lim_{p \to \plusinfty} n(p) =
  \plusinfty$. Using $ \sum_{k
    =1}^{\plusinfty} \gaStep_k^2 < \plusinfty$ and $(\gaStep_k)_{k \geq 1}$ is a
  nonincreasing sequence, we get for all $\ell \geq 2$,
\[
\lim_{p \to
    \plusinfty} \sum_{k=n(p)}^p \gaStep^{\ell}_k = 0 \eqsp,
\]
 which shows that
  the first term in the right side of \eqref{eq:eq_base} goes to $0$ as
  $p$ goes to infinity. As for the second term, since $\limsup_{n \to \plusinfty} \log\{C(\delta_x
\QKer_{\gaStep}^n) \}/ \GaStep_n < \plusinfty$, we get
  using that $(\gaStep_k)_{k \geq 1}$ is nonincreasing and $n(p) \leq \log(\GaStep_p)$,
 \begin{align*}
&   C(\delta_x \QKer_{\gaStep}^{n(p)}) \kappa^{\GaStep_{n(p),p}}\\
&\leq \exp \parenthese{\log(\kappa)\GaStep_p + \parentheseDeux{ \{\log(C(\delta_x \QKer_{\gaStep}^{n(p)}))/\GaStep_{n(p)} \}_+
-\log(\kappa)} \GaStep_{ n(p)}}\\
&\leq \exp \parenthese{\log(\kappa)\GaStep_p + \parentheseDeux{\sup_{k \geq 1} \{ \log(C(\delta_x \QKer_{\gaStep}^k))/\GaStep_k \}_+
-\log(\kappa)} \gaStep_1 \log(\GaStep_p)}
 \eqsp.
 \end{align*}
 Using $\kappa < 1$ and  $\lim_{k \to \plusinfty} \GaStep_k = \plusinfty$, we have
 $\lim_{p \to \plusinfty}C(\delta_x \QKer_{\gaStep}^{n(p)})
 \kappa^{\GaStep_{n(p),p}} = 0$, which concludes the proof.
\end{enumerate}
\end{proof}

Using \eqref{eq:eq_base}, we can also assess the convergence of the algorithm for
constant step sizes $\gaStep_k = \gaStep$ for all $k \geq 1$. Two different kinds of results can be derived. First, for a given
precision $\varepsilon > 0$, we can try to optimize the step size $\gamma$ to minimize the number of iterations $p$ required to achieve $\tvnorm{\delta_x \QKer_{\gaStep}^p - \pi} \leq \varepsilon$.
Second if the total number of iterations is fixed $p \geq 1$, we may determine the step size $\gaStep >0$ which minimizes $\tvnorm{\delta_x \QKer_{\gaStep}^{p}- \pi }$.
\begin{lemma}
  \label{propo:precision}
Assume that \eqref{eq:eq_base} holds.
  Assume that there exists $\bargaStep > 0$ such that
 $\bar{C}(x) = \sup_{\gaStep \in \ocint{0,\bargaStep}} \sup_{n \geq 1} C(\delta_x \RKer_{\gaStep}^n)
  <\plusinfty$ and  $\sup_{\gaStep \in \ocint{0,\bargaStep}} A(\gaStep,x) \leq \bar{A}(x)$, where $ C(\delta_x \RKer_{\gaStep}^n)$ and $A(\gaStep,x)$ are defined in \eqref{eq:ergodicity-langevin-expo} and \eqref{eq:eq_base1} respectively. Then for all $\varepsilon >0$, we get $\tvnorm{\delta_x  \RKer_{\gaStep}^p-\pi} \leq \varepsilon$ if
\begin{equation}
  \label{eq:precision}
  p > T \gaStep^{-1} \quad \text{ and } \quad \gaStep \leq \frac{-d+\sqrt{d^2 +(2/3) \bar{A}(x) \varepsilon^2 (L^2T)^{-1} }}{2 \bar{A}(x)/3} \wedge \bargaStep \eqsp,
\end{equation}
where
\begin{equation*}
  T = \parenthese{ \log\{ \bar{C}(x) \}-\log(\varepsilon/2)} \Big/(- \log(\kappa)) \eqsp.
\end{equation*}
\end{lemma}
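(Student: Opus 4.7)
The plan is to apply \Cref{propo:girsanov_comparison} with constant step size $\gaStep_k=\gaStep$ and a carefully chosen split index $n$, and then to arrange the two terms on the right--hand side of \eqref{eq:eq_base} to each be bounded by $\varepsilon/2$. The condition $\gaStep\leq\bargaStep$ is what allows me to invoke the uniform bounds $\bar A(x)$ and $\bar C(x)$ provided by the hypothesis.

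First, I would choose $n\in\{1,\dots,p-1\}$ so that the remaining time horizon $\GaStep_{n+1,p}=(p-n)\gaStep$ is at least $T$; the assumption $p>T\gaStep^{-1}$ is precisely what makes such a choice possible, and I can pick $n$ so that $(p-n)\gaStep$ exceeds $T$ by at most one step size. This handles the exponential term: by the definition of $T$,
\[
C(\delta_x \QKer_{\gaStep}^n)\,\kappa^{(p-n)\gaStep}
\leq \bar C(x)\,\kappa^{T}
=\bar C(x)\exp\bigl(T\log\kappa\bigr)
=\bar C(x)\,\frac{\varepsilon}{2\bar C(x)}
=\frac{\varepsilon}{2}.
\]

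Next, for the Girsanov--based first term: with a constant step size, the sum in \eqref{eq:eq_base} reduces to $(p-n)\gaStep^2\bigl(d+\gaStep\bar A(x)/3\bigr)$, and the choice of $n$ gives $(p-n)\gaStep\le T$ up to an $O(\gaStep)$ correction that is absorbed in the constants. Requiring
\[
2^{-1/2}L\,\sqrt{\,T\gaStep\bigl(d+\gaStep\bar A(x)/3\bigr)\,}\;\leq\;\frac{\varepsilon}{2}
\]
and squaring leads to the quadratic inequality in $\gaStep$
\[
\frac{\bar A(x)}{3}\,\gaStep^{2} + d\,\gaStep - \frac{\varepsilon^{2}}{2 L^{2}\,T}\;\leq\;0,
\]
whose positive root is exactly the upper bound on $\gaStep$ displayed in \eqref{eq:precision}. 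Intersecting with $\gaStep\leq\bargaStep$ yields the second condition, and summing the two contributions $\varepsilon/2+\varepsilon/2=\varepsilon$ gives the claim.

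Nothing in this argument is genuinely difficult; the main obstacle is cosmetic, namely the integer nature of the split index $n$. One cannot arrange $(p-n)\gaStep$ to equal $T$ exactly, so there is a discrepancy of at most one step. I would absorb this in the bound either by taking $n=\max\{0,\lceil p-T/\gaStep\rceil\}$ and noting that the extra $O(\gaStep)$ term in the first--term estimate is dominated by the linear term $d\gaStep$ already present in the quadratic, or by tightening the statement with the trivial modification that $p$ be the smallest integer strictly larger than $T/\gaStep$. The heart of the proof is the quadratic inequality above.
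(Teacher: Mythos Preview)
Your approach is exactly the paper's: the authors simply write ``For $p>T\gaStep^{-1}$, set $n=p-\lfloor T\gaStep^{-1}\rfloor$. Then using the stated expressions of $\gaStep$ and $T$ in \eqref{eq:eq_base} concludes the proof.'' Your derivation of the quadratic in $\gaStep$ and the bound on the exponential term is the intended computation, and your discussion of the integer rounding issue is actually more careful than the paper's one-line sketch, which glosses over the same point.
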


\begin{proof}
For $ p > T \gaStep^{-1}$, set $n = p - \floor{T \gaStep^{-1}}$. Then using the stated expressions of $\gaStep$ and $T$ in \eqref{eq:eq_base} concludes the proof.
\end{proof}

Note that an upper bound for $\gamma$ defined in \eqref{eq:precision}
is $\epsilon^2 (L^2Td)^{-1}$. The dependency of $T$ on the dimension $d$ will be addressed in \Cref{sec:pract-cond-geom}.

\begin{lemma}
  \label{propo:fixed_budget}
Assume that \Cref{assum:regularity} and \eqref{eq:ergodicity-langevin-expo} hold.
  In addition, assume that there exist $\bargaStep > 0$  and
  $n \in \nset$, $n >0$, such that $ \bar{C}_n(x)= \sup_{\gaStep \in \ocint{0,\bargaStep}} C(\delta_x \RKer_{\gaStep}^n)
  <\plusinfty$ and  $\sup_{\gaStep \in \ocint{0,\bargaStep}} A(\gaStep,x) \leq \bar{A}(x)$. For all $p >n$ and all $x \in \rset^d$, if $\gaStep = \log(p-n)\{ (p-n) (-\log(\kappa)) \}^{-1} \leq \bargaStep$, then
  \begin{multline*}
    \tvnorm{\delta_x \RKer_{\gaStep}^p - \pi} \\
\leq (p-n)^{-1/2}\{\bar{C}_n(x) (p-n)^{-1/2}+ \log(p-n)(d+ \bar{A}(x)\log(p-n)(p-n)^{-1})^{1/2} \} \eqsp.
  \end{multline*}
\end{lemma}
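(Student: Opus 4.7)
The plan is to specialize \Cref{propo:girsanov_comparison} to the constant step-size case $\gaStep_k = \gaStep$ for all $k$, and then tune $\gaStep$ to the prescribed value so that the two error terms balance.

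First, with the constant step size, $\QKer_{\gaStep}^p = \RKer_{\gaStep}^p$, $\QKer_{\gaStep}^n = \RKer_{\gaStep}^n$ and $\GaStep_{n+1,p} = (p-n)\gaStep$. The sum in \eqref{eq:eq_base} collapses to $(p-n)\{(\gaStep^3/3)A(\gaStep,x) + d\gaStep^2\}$, and using the uniform bounds $A(\gaStep,x) \leq \bar{A}(x)$ and $C(\delta_x\RKer_{\gaStep}^n) \leq \bar{C}_n(x)$, \eqref{eq:eq_base} yields
\begin{equation*}
\tvnorm{\delta_x \RKer_{\gaStep}^p - \pi}
\leq 2^{-1/2} \LU \bigl((p-n)\{(\gaStep^3/3)\bar{A}(x) + d\gaStep^2\}\bigr)^{1/2}
+ \bar{C}_n(x)\,\kappa^{(p-n)\gaStep} \eqsp.
\end{equation*}

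Second, the choice $\gaStep = \log(p-n)/\{(p-n)(-\log\kappa)\}$ is calibrated so that $(p-n)\gaStep(-\log\kappa) = \log(p-n)$, hence $\kappa^{(p-n)\gaStep} = (p-n)^{-1}$, controlling the second (ergodic) term by $\bar{C}_n(x)(p-n)^{-1} = (p-n)^{-1/2}\cdot \bar{C}_n(x)(p-n)^{-1/2}$. Substituting the same $\gaStep$ into $(p-n)\gaStep^2 = \log^2(p-n)/\{(p-n)(\log\kappa)^2\}$ and $(p-n)\gaStep^3 = \log^3(p-n)/\{(p-n)^2(-\log\kappa)^3\}$, we get that the first (discretization) term is of order
\begin{equation*}
\frac{\log(p-n)}{(p-n)^{1/2}}\,\Bigl(d + \bar{A}(x)\,\frac{\log(p-n)}{p-n}\Bigr)^{1/2}
\end{equation*}
up to the multiplicative constant $\LU/\{\sqrt{2}(-\log\kappa)\}$ arising from the Girsanov and Pinsker steps; factoring $(p-n)^{-1/2}$ yields exactly the stated form.

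There is essentially no obstacle beyond bookkeeping: the only non-trivial observation is that the calibrated $\gaStep$ reduces the geometric factor $\kappa^{(p-n)\gaStep}$ to the polynomial rate $(p-n)^{-1}$, which matches (up to a $\log$ factor) the $O(\sqrt{(p-n)\gaStep^2})$ discretization rate. The hypothesis $\gaStep \leq \bargaStep$ is what legitimizes the uniform bounds $\bar{A}(x)$ and $\bar{C}_n(x)$ at this particular $\gaStep$, which is why it appears as a standing assumption.
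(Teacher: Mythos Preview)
Your proposal is correct and follows exactly the approach the paper takes: the paper's own proof is the single line ``The proof is a straightforward calculation using \eqref{eq:eq_base},'' and your write-up is precisely that calculation spelled out. Your honest remark that the displayed bound suppresses the constant $\LU/\{\sqrt{2}(-\log\kappa)\}$ (and the factor $1/(3(-\log\kappa))$ in front of $\bar{A}(x)$) is accurate; the lemma as stated in the paper is implicitly absorbing these into the asymptotic form.
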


\begin{proof}
The proof is a  straightforward calculation using \eqref{eq:eq_base}.
\end{proof}

To get quantitative bounds for the total variation distance $\tvnorm{\delta_x \QKer_{\gaStep}^p -\pi}$
it is therefore required to get bounds on  $\kappa$, $A(\gaStep,x)$ and to control $C(\delta_x \QKer_\gaStep^n)$.
We will consider in the sequel  two different approaches to get \eqref{eq:ergodicity-langevin-expo}, one based on functional inequalities, the other on
coupling techniques.  We will
consider also increasingly stringent assumptions for the potential
$U$. Whereas we will always obtain the same type of
exponential bounds, the dependency of the constants on the dimension
will be markedly different. In the worst case, the dependency is exponential. It is polynomial when $U$ is convex.

\section{Practical conditions for geometric ergodicity of the Langevin diffusion and their consequences for ULA}
\sectionmark{Practical conditions}
\label{sec:pract-cond-geom}
\subsection{Superexponential densities}
\label{subsec:superexponential}
Assume first that the potential is superexponential outside a ball. This is a rather weak assumption (we do not assume convexity here).
\begin{assumption}
\label{assum:drift-condition-reinforced}
The potential $U$ is twice continuously differentiable  and there exist  $\rho >0$, $\alpha \in \ocint{1,2}$ and $M_\rho \geq 0$ such that for all $x \in \rset^d$, $\norm{x - \globmin } \geq M_\rho$, $\ps{\nabla U(x)}{x - \globmin} \geq \rho \norm{x -\globmin}^{\alpha}$.
\end{assumption}
The price to pay will be constants which are exponential in the dimension.
Under \Cref{assum:drift-condition-reinforced}, the potential $U$ is unbounded off compact set. Since $U$ is continuous, it has a global minimizer $\globmin$, which is a point at which $\nabla U(\globmin)= 0$. Without loss of generality, it is assumed that $U(\globmin)= 0$.
\begin{lemma}
  \label{lem:majo:U}
  Assume \Cref{assum:regularity} and \Cref{assum:drift-condition-reinforced}. Then for all $x \in \rset^d$,
\begin{equation}
\label{eq:calpha}
U(x) \geq \rho \norm[\alpha]{x - \globmin}/(\alpha+1) - \Calpha \quad \text{with} \quad \Calpha =  \rho \Mrho^{\alpha}/(\alpha+1)  + \Mrho^2 \LU/2   \eqsp.
\end{equation}
\end{lemma}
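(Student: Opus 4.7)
The plan is to integrate the one-dimensional function $\psi(t) = U(\globmin + t(x-\globmin))$ for $t \in \ccint{0,1}$ along the segment from $\globmin$ to $x$, combining the superexponential bound of \Cref{assum:drift-condition-reinforced} on the ``far'' portion with the gradient-Lipschitz bound of \Cref{assum:regularity} on the ``near'' portion. Since $U(\globmin)=0$ and $\nabla U(\globmin) = 0$, we have $U(x) = \int_0^1 \psi'(t)\rmd t$ with
\[
\psi'(t) = \ps{\nabla U(\globmin + t(x-\globmin))}{x-\globmin} \eqsp.
\]

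The key observation is that for $y_t = \globmin + t(x-\globmin)$ we have $y_t - \globmin = t(x-\globmin)$, so $t\psi'(t) = \ps{\nabla U(y_t)}{y_t - \globmin}$. Therefore, whenever $t\norm{x-\globmin} \geq \Mrho$, \Cref{assum:drift-condition-reinforced} gives
\[
\psi'(t) \geq t^{-1}\rho\, \norm{y_t - \globmin}^{\alpha} = \rho\, t^{\alpha-1}\norm{x-\globmin}^{\alpha} \eqsp.
\]
On the complementary regime $t\norm{x-\globmin} < \Mrho$, I use \Cref{assum:regularity} together with $\nabla U(\globmin)=0$ to obtain $\norm{\nabla U(y_t)} \leq \LU\, t\norm{x-\globmin}$, hence $\abs{\psi'(t)} \leq \LU\, t\norm{x-\globmin}^2$.

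Now I split on whether $\norm{x-\globmin} \geq \Mrho$. In that case, set $t_0 = \Mrho/\norm{x-\globmin} \in \ocint{0,1}$ and write $U(x) = \int_0^{t_0}\psi'(t)\rmd t + \int_{t_0}^1\psi'(t)\rmd t$. The first piece is bounded below by $-\LU\norm{x-\globmin}^2 t_0^2/2 = -\LU \Mrho^2/2$, and the second piece by
\[
\rho\norm{x-\globmin}^{\alpha}\int_{t_0}^1 t^{\alpha-1}\rmd t = (\rho/\alpha)\bigl(\norm{x-\globmin}^{\alpha} - \Mrho^{\alpha}\bigr) \eqsp.
\]
The only slightly delicate bookkeeping step is to convert the factor $1/\alpha$ to $1/(\alpha+1)$: since $1/\alpha - 1/(\alpha+1) > 0$ and $\norm{x-\globmin}^{\alpha}\geq \Mrho^{\alpha}$, the identity $(\rho/\alpha)(\norm{x-\globmin}^{\alpha}-\Mrho^{\alpha}) \geq (\rho/(\alpha+1))\norm{x-\globmin}^{\alpha} - \rho\Mrho^{\alpha}/(\alpha+1)$ holds, which combined with the $-\LU\Mrho^2/2$ term yields exactly \eqref{eq:calpha}.

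For the remaining case $\norm{x-\globmin} < \Mrho$, I simply apply the gradient-Lipschitz estimate over all of $\ccint{0,1}$ to get $U(x)\geq -\LU\Mrho^2/2$, and verify that the right-hand side of \eqref{eq:calpha} is no larger than this, again because $\norm{x-\globmin}^{\alpha} < \Mrho^{\alpha}$. The only conceptual subtlety is matching the constant $1/(\alpha+1)$ rather than the naturally appearing $1/\alpha$; everything else is a routine one-variable integration.
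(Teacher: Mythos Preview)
Your proof is correct and follows essentially the same approach as the paper: both integrate $U$ along the segment from $\globmin$ to $x$, split at $t_0=\Mrho/\norm{x-\globmin}$, bound the near part via the gradient-Lipschitz condition, and apply \Cref{assum:drift-condition-reinforced} on the far part. The only cosmetic difference is how the constant $1/(\alpha+1)$ appears: the paper weakens $\ps{\nabla U(y_t)}{x-\globmin}$ to $\ps{\nabla U(y_t)}{t(x-\globmin)}$ (valid since the inner product is nonnegative on $\coint{t_0,1}$) and then integrates $t^{\alpha}$ directly, whereas you integrate $t^{\alpha-1}$ to obtain the sharper factor $1/\alpha$ and subsequently relax it to $1/(\alpha+1)$ using $\norm{x-\globmin}^{\alpha}\geq\Mrho^{\alpha}$.
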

\begin{proof}
The elementary proof is postponed to \Cref{sec:proof:majo:U}.
\end{proof}
Following \cite[Theorem~2.3]{roberts:tweedie:1996}, we first establish a drift condition for the diffusion.
\begin{proposition}
\label{prop:poincare-constant-lyapunov}
\label{coro:poincare-constant-lyapunov}
Assume \Cref{assum:regularity} and \Cref{assum:drift-condition-reinforced}.
For any $\varsigma \in \ooint{0,1}$, the drift condition \eqref{eq:lyapunov-condition-V} is satisfied with the Lyapunov function $\VSAlpha(x) = \exp(\varsigma U(x))$,
    $\thetaSAlpha = \varsigma dL$, $\Setdrift_{\varsigma} = \ball{\xstar}{\KSAlpha}$, $\KSAlpha = \max(\{2dL/(\rho(1-\varsigma))\}^{1/(2(\alpha-1))}, M_\rho)$ and $\bSAlpha = \varsigma dL  \sup\nolimits_{\{y \in \Setdrift_{\varsigma} \}}\{ \VSAlpha(y)\}$.
Moreover, there exist constants
$\ConstVnorm < \infty$ and $\rateExpVnorm >0$ such that for all $t \in \rset_+$ and
probability measures $\mu_0$ and $\nu_0$ on $(\rset^d, \mathcal{B}(\rset^d))$, satisfying $\mu_0(\VSAlpha) +
\nu_0(\VSAlpha) < \plusinfty$,
\[
\Vnorm[\VSAlpha]{\mu_0 \PL_t - \nu_0 \PL_t} \leq \ConstVnorm \rme^{-\rateExpVnorm t}\Vnorm[\VSAlpha]{\mu_0 - \nu_0} \eqsp,
\
\Vnorm[\VSAlpha]{\mu_0 \PL_t - \pi} \leq \ConstVnorm \rme^{-\rateExpVnorm t}\mu_0(\VSAlpha)
 \eqsp.
\]
\end{proposition}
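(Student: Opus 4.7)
The plan is to first establish the drift inequality \eqref{eq:lyapunov-condition-V} by direct computation of $\geneLang V_\varsigma$, and then appeal to general Harris-type theorems (drift + petiteness) to promote the drift to a $V_\varsigma$-geometric ergodicity bound for the Langevin semigroup.

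First, since $\VSAlpha = \rme^{\varsigma U}$, the chain rule gives
\begin{equation*}
\nabla \VSAlpha = \varsigma \VSAlpha \nabla U, \qquad
\Delta \VSAlpha = \varsigma \VSAlpha \Delta U + \varsigma^2 \VSAlpha \norm[2]{\nabla U},
\end{equation*}
so that $\geneLang \VSAlpha = -\ps{\nabla U}{\nabla \VSAlpha} + \Delta \VSAlpha = \varsigma \VSAlpha \bigl[ \Delta U - (1-\varsigma) \norm[2]{\nabla U}\bigr]$. I would then bound the two terms separately. Under \Cref{assum:regularity} the Hessian of $U$ has spectral norm at most $\LU$ almost everywhere, hence $\Delta U \leq d\LU$. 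Under \Cref{assum:drift-condition-reinforced}, Cauchy--Schwarz applied to $\ps{\nabla U(x)}{x-\xstar} \geq \rho \norm[\alpha]{x-\xstar}$ yields, for $\norm{x-\xstar}\geq \Mrho$, the lower bound $\norm{\nabla U(x)} \geq \rho \norm[\alpha-1]{x-\xstar}$.

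Combining these two estimates, on the exterior $\{\norm{x-\xstar} \geq \KSAlpha\}$ with $\KSAlpha$ chosen as in the statement, the negative term $(1-\varsigma)\norm[2]{\nabla U}$ dominates $\Delta U$ by a factor of two, leading to $\geneLang \VSAlpha(x) \leq -\varsigma d\LU \VSAlpha(x)$. Inside the compact set $\smallSetD_{\varsigma} = \ball{\xstar}{\KSAlpha}$ one crudely upper-bounds $\geneLang \VSAlpha$ by $\varsigma d \LU \sup_{\smallSetD_{\varsigma}} \VSAlpha$, producing the drift \eqref{eq:lyapunov-condition-V} with the stated $\thetaSAlpha, \bSAlpha, \smallSetD_\varsigma$.

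Having established the Foster--Lyapunov drift toward a compact set, the geometric ergodicity in $\VSAlpha$-norm follows from classical subgeometric/geometric ergodicity criteria for continuous-time Markov processes (see \eg~Down--Meyn--Tweedie): the Langevin diffusion has a smooth positive transition density, so every compact set is a small (hence petite) set for the skeleton chain, providing the minorization condition needed to convert the drift into a $\VSAlpha$-geometric contraction of the semigroup. This produces the coupling-type inequality $\Vnorm[\VSAlpha]{\mu_0 \PL_t - \nu_0 \PL_t} \leq \ConstVnorm \rme^{-\rateExpVnorm t}\Vnorm[\VSAlpha]{\mu_0 - \nu_0}$; the second inequality follows by specializing $\nu_0 = \pi$ and using $\pi(\VSAlpha) \leq \bSAlpha/\thetaSAlpha <\plusinfty$ together with the invariance $\pi \PL_t = \pi$.

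The main technical obstacle I anticipate is the careful bookkeeping of constants to obtain the precise expression for $\KSAlpha$ (in particular balancing the constant $2$ between the exterior radius and the drift coefficient $\thetaSAlpha$), and the verification that one can invoke the Down--Meyn--Tweedie machinery in continuous time with an unbounded Lyapunov function $\VSAlpha$; the remaining ingredients -- the chain rule, Cauchy--Schwarz, and the Hessian bound from \Cref{assum:regularity} -- are all routine.
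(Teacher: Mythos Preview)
Your proposal is correct and follows essentially the same route as the paper: the paper also computes $\geneLang \VSAlpha = \varsigma(1-\varsigma)\{-\norm[2]{\nabla U} + (1-\varsigma)^{-1}\Delta U\}\VSAlpha$, bounds $\Delta U \leq dL$ via \Cref{assum:regularity}--\Cref{assum:drift-condition-reinforced}, uses Cauchy--Schwarz on \Cref{assum:drift-condition-reinforced} to get $\norm{\nabla U(x)} \geq \rho\norm{x-\xstar}^{\alpha-1}$ outside $\ball{\xstar}{M_\rho}$, and then cites \cite[Theorem~2.3]{roberts:tweedie:1996} and \cite[Theorem~6.1]{meyn:tweedie:1993:3} for the $\VSAlpha$-geometric ergodicity. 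Your invocation of Down--Meyn--Tweedie in place of the latter references is an equivalent way to close the argument.
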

\begin{proof}
The proof, adapted from \cite[Theorem~2.3]{roberts:tweedie:1996} and \cite[Theorem 6.1]{meyn:tweedie:1993:3}, is postponed to \Cref{proof:prop:poincare-constant-lyapunov}.
\end{proof}
Under \Cref{assum:drift-condition-reinforced}, explicit expressions for  $\ConstVnorm$ and $\rateExpVnorm$ have been developed in the literature but these estimates are in general very
conservative. We now turn to establish \eqref{eq:foster-lyapunov-drift} for the Euler discretization.

\begin{proposition}
\label{prop:drift-R-gaStep}
Assume   \Cref{assum:regularity} and \Cref{assum:drift-condition-reinforced}.
Let $\bargaStep \in \ooint{0,L^{-1}}$. For all $\gaStep \in \ocint{0,\bargaStep}$ and $x \in \rset^d$,  $\RKer_{\gaStep}$ satisfies the drift condition \eqref{eq:foster-lyapunov-drift} with $\Vly(x) = \exp(U(x)/2)$,  $\KDriftRgaStep=\max ( M_\rho , (8\log(\lDriftgaStep)/\rho^2)^{1/(2(\alpha-1))})$, $\bDriftRgaStep = -2 \log(\lDriftgaStep)  \lDriftgaStep^{-\bargaStep} \sup\nolimits_{\{y \in \ball{\xstar}{ \KDriftRgaStep}\}} \Vly(y)$ and $\lDriftgaStep = \rme^{-dL / \{2(1-L \bargaStep)\}}$.
\end{proposition}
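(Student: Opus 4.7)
The plan is to compute $R_{\gaStep}V$ explicitly via a Gaussian integral, then split into two regimes: a ``drift'' regime far from $\xstar$ where the quadratic lower bound on $\|\nabla U\|^2$ beats the variance term, and a ``compact'' regime near $\xstar$ where $V$ is bounded and yields the constant $c$.

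First I would write
\[
R_{\gaStep}V(x) = \PE\bigl[\exp\bigl(U(x-\gaStep\nabla U(x)+\sqrt{2\gaStep}Z)/2\bigr)\bigr]
\]
with $Z\sim\mathcal{N}(0,\IdM)$, and apply the descent lemma from \Cref{assum:regularity}:
\[
U(y)\leq U(x)+\langle\nabla U(x),y-x\rangle+(L/2)\|y-x\|^{2},
\]
with $y=x-\gaStep\nabla U(x)+\sqrt{2\gaStep}Z$. Expanding $\|y-x\|^2$ and $\langle\nabla U(x),y-x\rangle$ and dividing by $2$ leaves, inside the exponential, a linear-plus-quadratic form in $Z$ with coefficients $a=\sqrt{\gaStep/2}(1-L\gaStep)$ on $\langle\nabla U(x),Z\rangle$ and $b=L\gaStep/2$ on $\|Z\|^{2}$ (the term $b<1/2$ because $\gaStep\leq\bargaStep<L^{-1}$). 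Completing the square in the Gaussian density gives the closed form
\[
\PE[\exp(a\langle \nabla U(x),Z\rangle+b\|Z\|^{2})]=(1-L\gaStep)^{-d/2}\exp\!\left(\tfrac{\gaStep(1-L\gaStep)}{4}\|\nabla U(x)\|^{2}\right),
\]
and after combining with the deterministic factor the $\|\nabla U(x)\|^{2}$ coefficient collapses to $-\gaStep/4$, yielding the clean bound
\[
R_{\gaStep}V(x)\leq V(x)\,(1-L\gaStep)^{-d/2}\exp\bigl(-\gaStep\|\nabla U(x)\|^{2}/4\bigr).
\]

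Next, using $-\log(1-L\gaStep)\leq L\gaStep/(1-L\bargaStep)$ for $\gaStep\in\ocint{0,\bargaStep}$, I bound $(1-L\gaStep)^{-d/2}\leq\exp(dL\gaStep/(2(1-L\bargaStep)))=\lDriftgaStep^{-\gaStep}$. Thus
\[
R_{\gaStep}V(x)\leq V(x)\,\exp\bigl(\gaStep\bigl[-\log(\lDriftgaStep)-\|\nabla U(x)\|^{2}/4\bigr]\bigr).
\]
For $\|x-\xstar\|\geq M_{\rho}$, \Cref{assum:drift-condition-reinforced} and Cauchy--Schwarz give $\|\nabla U(x)\|\geq\rho\|x-\xstar\|^{\alpha-1}$, hence $\|\nabla U(x)\|^{2}\geq\rho^{2}\|x-\xstar\|^{2(\alpha-1)}$. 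Choosing $\KDriftRgaStep=\max(M_{\rho},(-8\log(\lDriftgaStep)/\rho^{2})^{1/(2(\alpha-1))})$ guarantees $\|\nabla U(x)\|^{2}/4\geq-2\log(\lDriftgaStep)$ for $\|x-\xstar\|\geq\KDriftRgaStep$, so on this region the exponent is $\leq\gaStep\log(\lDriftgaStep)$ and we obtain $R_{\gaStep}V(x)\leq\lDriftgaStep^{\gaStep}V(x)$.

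For $x\in\ball{\xstar}{\KDriftRgaStep}$, I drop the negative exponential and retain $R_{\gaStep}V(x)\leq V(x)\lDriftgaStep^{-\gaStep}$, then write
\[
R_{\gaStep}V(x)-\lDriftgaStep^{\gaStep}V(x)\leq V(x)(\lDriftgaStep^{-\gaStep}-\lDriftgaStep^{\gaStep})=2V(x)\sinh(\gaStep\log(1/\lDriftgaStep)).
\]
Since $\sinh(u)\leq u\rme^{u}$ for $u\geq 0$, the right-hand side is bounded by $-2\gaStep\log(\lDriftgaStep)\lDriftgaStep^{-\gaStep}V(x)\leq-2\gaStep\log(\lDriftgaStep)\lDriftgaStep^{-\bargaStep}V(x)$, yielding the constant $\bDriftRgaStep$ as stated (using that $V$ is bounded on $\ball{\xstar}{\KDriftRgaStep}$). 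Combining both regimes gives \eqref{eq:foster-lyapunov-drift} globally. The main technical step is the Gaussian integration in the exponential Lyapunov setting: the linear and quadratic terms in $Z$ are not independent, so one must carry out the joint moment-generating-function calculation carefully and verify that the $\|\nabla U(x)\|^{2}$ terms combine with the correct sign to produce the drift.
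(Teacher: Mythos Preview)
Your proof is correct and follows essentially the same route as the paper's: both apply the descent lemma to bound $U(y)$, carry out the Gaussian integration to obtain $R_\gaStep V(x)\leq V(x)(1-L\gaStep)^{-d/2}\exp(-\gaStep\|\nabla U(x)\|^2/4)$, replace $(1-L\gaStep)^{-d/2}$ by $\lDriftgaStep^{-\gaStep}$, and then split into the two regimes. The only cosmetic differences are that the paper integrates directly in the variable $y$ (completing the square in the transition density) rather than passing to the MGF in $Z$, and for the compact regime it writes $\lDriftgaStep^{\gaStep}(\lDriftgaStep^{-2\gaStep}-1)$ and invokes $\rme^{t}-1\leq t\rme^{t}$, which is exactly your $\sinh(u)\leq u\rme^{u}$ after the substitution $t=2u$.
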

\begin{proof}
The proof is postponed to \Cref{proof:prop:drift-R-gaStep}.
\end{proof}

\begin{theorem}
\label{theo:convergence_TV_non_quantitatif}
Assume \Cref{assum:regularity} and \Cref{assum:drift-condition-reinforced}. Let $(\gaStep_k)_{ k\geq 1}$ be a nonincreasing sequence with $\gaStep_1 < \bargaStep$,  $\bargaStep \in \ooint{0,\LU^{-1}}$. Then,
for all $n\geq 0$, $p \geq 1$, $n<p$, and $x \in \rset^d$, \eqref{eq:eq_base} holds with $ \log(\kappa) = -\rateExpVnormUndemi$ and
\begin{equation*}
A(\gaStep,x) \leq L^{2} \parenthese{\frac{\alpha+1}{\rho}\parentheseDeux{\Calpha + \frac{4(2-\alpha)(\alpha+1)}{\alpha \rho} +2 \log\defEns{\FunfoD(\lDriftgaStep, \bDriftRgaStep,\gaStep_1,\Vly(x)) }}}^{2/\alpha}
\end{equation*}
\begin{equation}
 \label{eq:definition-B-n-p-0}
C(\delta_x \QKer_{\gaStep}^n) \leq \ConstVnormUndemi \Funfo(\lDriftgaStep,\GaStep_{1,n}, \bDriftRgaStep, \gaStep_1,\Vly(x)) \eqsp,
\end{equation}
where $\ConstVnormUndemi$, $\rateExpVnormUndemi$ are given by \Cref{coro:poincare-constant-lyapunov}, $\Funfo$ by \eqref{eq:def_Funfo}, $\Vly$, $\lDriftgaStep$, $\bDriftRgaStep$ in \Cref{prop:drift-R-gaStep}, $\FunfoD$ by \eqref{eq:def_FunfoD}, $\Calpha$   in \eqref{eq:calpha}.
\end{theorem}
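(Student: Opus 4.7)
The plan is to apply \Cref{propo:girsanov_comparison}, which supplies the decomposition \eqref{eq:eq_base}, and to identify each of $\kappa$, $C(\delta_x \QKer_\gaStep^n)$, and $A(\gaStep,x)$ using L1 and H1. The crucial observation is that the two available Lyapunov frameworks are compatible: \Cref{prop:poincare-constant-lyapunov} specialized at $\varsigma = 1/2$ and \Cref{prop:drift-R-gaStep} share the same Lyapunov function $\Vly(x) = \exp(U(x)/2)$.

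First, specialize \Cref{prop:poincare-constant-lyapunov} at $\varsigma = 1/2$. This gives the $\Vly$-uniform ergodicity bound
\[
\Vnorm[\Vly]{\mu_0 \PL_t - \pi} \leq \ConstVnormUndemi \rme^{-\rateExpVnormUndemi t} \mu_0(\Vly),
\]
and since $\Vly \geq 1$ this dominates the total-variation inequality, so \eqref{eq:ergodicity-langevin-expo} holds with $\log \kappa = -\rateExpVnormUndemi$ and $C(\mu_0) = \ConstVnormUndemi \mu_0(\Vly)$. Applying this with $\mu_0 = \delta_x \QKer_\gaStep^n$ yields $C(\delta_x \QKer_\gaStep^n) = \ConstVnormUndemi \QKer_\gaStep^n \Vly(x)$. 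To bound $\QKer_\gaStep^n \Vly(x)$, apply \Cref{lem:bound_inho_moment} with the Foster--Lyapunov drift of \Cref{prop:drift-R-gaStep} (constants $\lDriftgaStep$ and $\bDriftRgaStep$, valid on $\ocint{0,\bargaStep}$); this gives $\QKer_\gaStep^n \Vly(x) \leq \Funfo(\lDriftgaStep, \GaStep_{1,n}, \bDriftRgaStep, \gaStep_1, \Vly(x))$, which is \eqref{eq:definition-B-n-p-0}.

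Second, bound $A(\gaStep,x)$. By L1 and $\nabla U(\xstar) = 0$, $\norm{\nabla U(z)}^2 \leq L^2 \norm{z-\xstar}^2$. Inverting \Cref{lem:majo:U} yields $\norm{z-\xstar}^\alpha \leq \frac{\alpha+1}{\rho}(U(z)+\Calpha) = \frac{\alpha+1}{\rho}(2 \log \Vly(z) + \Calpha)$, and hence
\[
\norm{z-\xstar}^2 \leq \Bigl(\frac{\alpha+1}{\rho}\Bigr)^{2/\alpha} (2 \log \Vly(z) + \Calpha)^{2/\alpha}.
\]
Integrate against $\QKer_\gaStep^k(x,\cdot)$. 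Set $G := \FunfoD(\lDriftgaStep, \bDriftRgaStep, \gaStep_1, \Vly(x))$ and $M := G \rme^{\Calpha/2}$. From \Cref{lem:bound_inho_moment} and Markov's inequality, $\QKer_\gaStep^k(x, \{\norm{z-\xstar} > r\}) \leq M \exp(-\rho r^\alpha/(2(\alpha+1)))$ for $r \geq (\Calpha(\alpha+1)/\rho)^{1/\alpha}$. Expressing $\int \norm{z-\xstar}^2 \QKer_\gaStep^k(x,\rmd z) = 2 \int_0^\infty r\,\QKer_\gaStep^k(x, \{\norm{z-\xstar}>r\}) \rmd r$, splitting the integral at the threshold $r_1 = (2(\alpha+1)\log M/\rho)^{1/\alpha}$, and using the subadditivity $(u+\log M)^{(2-\alpha)/\alpha} \leq u^{(2-\alpha)/\alpha} + (\log M)^{(2-\alpha)/\alpha}$ (valid because $(2-\alpha)/\alpha \in \coint{0,1}$) produces the leading term $((\alpha+1)/\rho)^{2/\alpha}(2\log G+\Calpha)^{2/\alpha}$ together with a lower-order remainder. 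Absorb the remainder into the exponent via the subgradient inequality $(a+K)^{2/\alpha} \geq a^{2/\alpha} + (2/\alpha)K a^{2/\alpha-1}$ at $a = 2\log G + \Calpha$, with the choice $K = \frac{4(2-\alpha)(\alpha+1)}{\alpha\rho}$; this yields the announced bound on $A(\gaStep,x)$.

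The main obstacle is this last absorption step: because $u \mapsto u^{2/\alpha}$ is convex for $\alpha \in \ocint{1,2}$, a direct application of Jensen's inequality to $\int \log \Vly \,\rmd \QKer_\gaStep^k(x,\cdot)$ would yield a bound in the wrong direction. One must instead exploit the full exponential tail of $\Vly$ under $\QKer_\gaStep^k(x,\cdot)$; the interplay between the shape parameter $\alpha$ (governing the polynomial rate of the change of variable $r \mapsto r^\alpha$) and the constant $\rho$ (governing the decay of the tail) is precisely what fixes the additive correction $\frac{4(2-\alpha)(\alpha+1)}{\alpha\rho}$ inside the bracket, and its vanishing at $\alpha=2$ reflects the fact that the estimate is tight (via usual Jensen) in that limit.
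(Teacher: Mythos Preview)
Your treatment of $\kappa$ and $C(\delta_x \QKer_\gaStep^n)$ is correct and matches the paper: specialize \Cref{prop:poincare-constant-lyapunov} at $\varsigma=1/2$, then feed the Foster--Lyapunov drift of \Cref{prop:drift-R-gaStep} into \Cref{lem:bound_inho_moment}.

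The bound on $A(\gaStep,x)$, however, has a real gap. Your tail-integration route produces a remainder of the form
\[
\frac{2}{\alpha}\Bigl(\frac{2(\alpha+1)}{\rho}\Bigr)^{2/\alpha}\bigl[\Gammabf(2/\alpha)+(\log M)^{(2-\alpha)/\alpha}\bigr],
\]
and absorbing it via the subgradient inequality requires $K\geq 2+2\Gammabf(2/\alpha)(\log M)^{-(2-\alpha)/\alpha}$. Your choice $K=\frac{4(2-\alpha)(\alpha+1)}{\alpha\rho}$ does not satisfy this: as $\alpha\to 2$ the right-hand side tends to $4$ while your $K\to 0$, and for $\alpha=1$ the required inequality $8/\rho\geq 2+2/\log M$ fails for large $\rho$. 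The remainder from the tail split does \emph{not} vanish at $\alpha=2$, contrary to your closing remark; the splitting itself is already lossy. So the absorption step, as written, is false.

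The paper avoids the tail route entirely. The key observation you missed is that Jensen \emph{does} apply in the right direction, provided one chooses the right convex function: set $A_\alpha=\rho/(2(\alpha+1))$, $B_\alpha=\{(2-\alpha)/(\alpha A_\alpha)\}^{2/\alpha}$, and $\phi_\alpha(t)=\exp\bigl(A_\alpha(t+B_\alpha)^{\alpha/2}\bigr)$. The shift $B_\alpha$ is chosen precisely so that $\phi_\alpha$ is convex and increasing on $\coint{0,\infty}$ (check the second derivative). Then
\[
\PE_x\bigl[\norm{X_k-\xstar}^2\bigr]\leq \phi_\alpha^{-1}\bigl(\PE_x[\phi_\alpha(\norm{X_k-\xstar}^2)]\bigr)
\leq \phi_\alpha^{-1}\bigl(\rme^{\Calpha/2+B_\alpha^{\alpha/2}}\PE_x[\Vly(X_k)]\bigr),
\]
using \Cref{lem:majo:U} and subadditivity of $t\mapsto t^{\alpha/2}$. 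Since $\phi_\alpha^{-1}(t)\leq (A_\alpha^{-1}\log t)^{2/\alpha}$, \Cref{lem:bound_inho_moment} yields the stated bound; the constant $\frac{4(2-\alpha)(\alpha+1)}{\alpha\rho}$ is exactly $2B_\alpha^{\alpha/2}$.
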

\begin{proof}
The proof is postponed to \Cref{proof:theo:convergence_TV_non_quantitatif}.
\end{proof}
\Cref{eq:definition-B-n-p-0} implies that for all $x \in \rset^d$, we have $\sup_{n \geq 0} C(\delta_x \QKer_{\gaStep}^n) \leq \FunfoD(\lDriftgaStep, \bDriftRgaStep, \gaStep_1,\Vly(x))$, so \Cref{propo:limit-zeros-tv-1}-\ref{propo:limit-zeros-tv-1a} shows that $\lim_{p \to \plusinfty} \tvnorm{\delta_x \QKer_{\gaStep}^p-\pi} = 0$ for all $x \in \rset^d$
provided that $\lim_{k \to \plusinfty} \gaStep_k = 0$ and $\lim_{k \to \plusinfty} \GaStep_k = \plusinfty$. In addition, for the case of constant step size $\gaStep_k = \gaStep$ for all $k \geq 1$, \Cref{propo:precision} and \Cref{propo:fixed_budget} can be applied.

Let $V: \rset^d \to \rset$, defined for all $x \in \rset^d$ by
$V(x)=\exp(U(x)/2)$. By \Cref{coro:poincare-constant-lyapunov},
$(\PL_t)_{t \geq 0}$ is a contraction operator on the space of finite
signed measure $\mu \in \measSet_0$, $\mu(V^{1/2}) < \plusinfty$,
endowed with the norm $\Vnorm[V^{1/2}]{\cdot}$. It is therefore
possible to control $\Vnorm[V^{1/2}]{\delta_x \QKer_{\gaStep}^p
  -\pi}$. To simplify the notations, we limit our discussion to
constant step sizes.
\begin{theorem}
\label{theo:convergence_TV_gaStep_const}
\label{coro:asympt_bias}
Assume \Cref{assum:regularity} and \Cref{assum:drift-condition-reinforced}. Then, for all $p \geq 1$, $x \in \rset^d$
 and $\gaStep \in \ooint{0,\LU^{-1}}$, we have
\begin{equation}
\label{eq:convergence_TV_gaStep_const_1}
  \Vnorm[\Vlysqrt]{\delta_x \RKer_{\gaStep}^p -\pi} \leq \ConstVnormUnquart \kappa^{\gaStep p} \Vlysqrt(x)  + \boundDeuxTVa \eqsp,
\end{equation}
where $\log(\kappa)= -\rateExpVnormUnquart$, $\ConstVnormUnquart,\rateExpVnormUnquart, \thetaUndemi,\bUndemi$ are defined in \Cref{coro:poincare-constant-lyapunov}, $V,\lDriftgaStep,\bDriftRgaStep$ in \Cref{prop:drift-R-gaStep}, $\FunfoD$ in \eqref{eq:def_FunfoD}  and
\begin{multline*}
  \boundDeuxTVaccarre=
 L^2 \max(1,\ConstVnormUnquart^2)(1+\gaStep)(1-\kappa)^{-2} \parenthese{2\FunfoD(\lDriftgaStep,\bDriftRgaStep,\gaStep,v) +  \bUndemi /\thetaUndemi} \\
 \times
   \parenthese{  \gaStep  d+3^{-1}\gaStep^2 \Vnorm[\Vlyb^{1/2}]{\nabla U}^2 \, \FunfoD(\lDriftgaStep,\bDriftRgaStep, \gaStep,v)  } \eqsp.
\end{multline*}
Moreover, $\RKer_{\gaStep}$ has a unique invariant distribution $\pi_{\gaStep}$ and
\[
\Vnorm[\Vlysqrt]{\pi-\pi_{\gaStep}} \leq \boundDeuxTVab \eqsp.
\]
\end{theorem}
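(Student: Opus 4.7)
The plan is to combine the Girsanov comparison used in the proof of \Cref{propo:girsanov_comparison} with the $\Vlysqrt$-contraction of the Langevin semigroup provided by \Cref{coro:poincare-constant-lyapunov} for $\varsigma = 1/4$, observing that $\Vlysqrt = \rme^{U/4}$ is precisely the Lyapunov function $\VSAlpha$ for this choice. For constant step size $\gaStep$, I write the telescoping identity
\[
\delta_x \RKer_\gaStep^p - \pi = (\delta_x \PL_{p\gaStep} - \pi) + \sum_{k=0}^{p-1}\delta_x \RKer_\gaStep^k(\RKer_\gaStep - \PL_\gaStep)\PL_{(p-k-1)\gaStep}
\]
and apply the triangle inequality in $\Vnorm[\Vlysqrt]{\cdot}$. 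The first term is bounded by $\ConstVnormUnquart \kappa^{p\gaStep}\Vlysqrt(x)$ directly from \Cref{coro:poincare-constant-lyapunov}. Each telescoping summand, since $\delta_x \RKer_\gaStep^k(\RKer_\gaStep - \PL_\gaStep) \in \measSet_0(\rset^d)$, is controlled by $\ConstVnormUnquart \kappa^{(p-k-1)\gaStep}\Vnorm[\Vlysqrt]{\delta_x \RKer_\gaStep^k(\RKer_\gaStep - \PL_\gaStep)}$.

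The key technical input is a weighted one-step comparison
\[
\Vnorm[\Vlysqrt]{\mu(\RKer_\gaStep - \PL_\gaStep)}^2 \leq C \bigl(\gaStep d + \gaStep^2 \mu(\norm{\nabla U}^2)/3\bigr) \bigl(\mu \RKer_\gaStep(\Vly) \vee \mu \PL_\gaStep(\Vly)\bigr)\eqsp,
\]
obtained by combining a weighted Pinsker-type inequality $\Vnorm[\Vlysqrt]{\mu - \nu}^2 \leq c\, \KL(\mu \Vert \nu)(\mu(\Vly) \vee \nu(\Vly))$ (derived via a Hellinger bound and Cauchy--Schwarz on $\int f(\rmd \mu/\rmd \nu - 1)\rmd \nu$) with the single-step Girsanov KL estimate from the proof of \Cref{propo:girsanov_comparison}, $\KL(\mu \RKer_\gaStep \Vert \mu \PL_\gaStep) \leq (\LU^2/4)(\gaStep^2 d + \gaStep^3\mu(\norm{\nabla U}^2)/3)$. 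Plugging this into the telescoping sum at $\mu = \delta_x \RKer_\gaStep^k$, bounding $\mu(\norm{\nabla U}^2) \leq \Vnorm[\Vlysqrt]{\nabla U}^2 \RKer_\gaStep^k \Vly(x) \leq \Vnorm[\Vlysqrt]{\nabla U}^2 \FunfoD(\lDriftgaStep, \bDriftRgaStep, \gaStep, \Vly(x))$ via \Cref{lem:bound_inho_moment}, and $\PL_\gaStep \Vly \leq \Vly + \bUndemi/\thetaUndemi$ by integrating the Lyapunov inequality of \Cref{coro:poincare-constant-lyapunov}, and finally summing the geometric series $\sum_{k=0}^{p-1}\kappa^{(p-k-1)\gaStep} \leq (1-\kappa^\gaStep)^{-1} \leq (1+\gaStep)(1-\kappa)^{-1}$ assembles $\boundDeuxTVa$ in the stated form. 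The main obstacle is to establish the weighted Pinsker inequality cleanly and to match constants so that the factor $2\FunfoD + \bUndemi/\thetaUndemi$ arises exactly as stated.

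For the invariant measure statement, existence and uniqueness of $\pi_\gaStep$ follow from the drift condition of \Cref{prop:drift-R-gaStep} together with the strong Feller and irreducibility properties of $\RKer_\gaStep$ recorded immediately after its definition, yielding $\Vlysqrt$-geometric ergodicity via standard Meyn--Tweedie theory. To obtain $\Vnorm[\Vlysqrt]{\pi - \pi_\gaStep} \leq \boundDeuxTVab$, I would evaluate \eqref{eq:convergence_TV_gaStep_const_1} at $x = \xstar$, for which $\Vly(\xstar) = 1$ since $U(\xstar) = 0$, and send $p \to \infty$. The first term $\ConstVnormUnquart \kappa^{p\gaStep}\Vlysqrt(\xstar) \to 0$, and $\Vlysqrt$-geometric ergodicity of $\RKer_\gaStep$ gives $\delta_{\xstar}\RKer_\gaStep^p \to \pi_\gaStep$ in $\Vlysqrt$-norm, so the remaining bound is $\boundDeuxTVab$, matching the choice $v = 1 = \Vly(\xstar)$ in $\boundDeuxTVac$.
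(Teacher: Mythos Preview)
Your weighted Pinsker inequality is essentially the paper's \Cref{lem:contol_gamma_const}, and the overall plan of combining it with Girsanov and the $\Vlysqrt$-contraction of \Cref{coro:poincare-constant-lyapunov} is the right one. The gap is in the last step: the inequality
\[
(1-\kappa^{\gaStep})^{-1} \;\leq\; (1+\gaStep)(1-\kappa)^{-1}
\]
is false. For $0<\kappa<1$ and $0<\gaStep<1$ one has $\kappa^{\gaStep}>\kappa$, hence $1-\kappa^{\gaStep}<1-\kappa$; in fact $(1-\kappa^{\gaStep})^{-1}\sim(-\gaStep\log\kappa)^{-1}\to\infty$ as $\gaStep\to 0$.

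This is not a matter of constants. A single-step Girsanov estimate gives $\KL(\mu \RKer_{\gaStep}\,|\,\mu \PL_{\gaStep})=O(\gaStep^{2})$, and after the weighted Pinsker inequality the one-step $\Vlysqrt$-error is $O(\gaStep)$. Summing against the weights $\kappa^{(p-k-1)\gaStep}$ gives at best $O(\gaStep)(1-\kappa^{\gaStep})^{-1}=O(1)$, which does \emph{not} tend to $0$ with $\gaStep$ and cannot reproduce the stated $\boundDeuxTVa=O(\gaStep^{1/2})$. The loss comes from taking the square root step by step rather than on an aggregate.

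The paper fixes this by a block decomposition: set $\kgaStep=\lceil\gaStep^{-1}\rceil$ and telescope at the level of blocks of $\kgaStep$ consecutive steps, each block spanning physical time $\kgaStep\gaStep\in[1,1+\gaStep]$. The Girsanov KL over an entire block is additive in the steps and is therefore $O(\gaStep)$; applying \Cref{lem:contol_gamma_const} \emph{once per block} yields a per-block $\Vlysqrt$-error of order $\gaStep^{1/2}$. Between blocks the semigroup contracts by $\kappa^{\kgaStep\gaStep}\leq\kappa$, so the geometric sum over blocks is bounded by $(1-\kappa)^{-1}$, and this is exactly the origin of the factors $(1+\gaStep)(1-\kappa)^{-2}$ and $(\gaStep d+3^{-1}\gaStep^{2}\Vnorm[\Vlyb^{1/2}]{\nabla U}^{2}\cdots)$ in $\boundDeuxTVaccarre$. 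Your argument for the second assertion about $\pi_{\gaStep}$ is fine and matches the paper's.
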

\begin{proof}
  The proof of \eqref{eq:convergence_TV_gaStep_const_1} is postponed
  to \Cref{proof:theo:convergence_TV_gaStep_const}.  The bound for
  $\Vnorm[\Vlysqrt]{\pi-\pi_{\gaStep}}$ is an easy consequence of
  \eqref{eq:convergence_TV_gaStep_const_1}: by
  \Cref{propo:drift-Euler} and
  \cite[Theorem~16.0.1]{meyn:tweedie:2009}, $\RKer_{\gaStep}$ is
  $\Vlysqrt$-uniformly ergodic: $\lim_{p \to \plusinfty}
  \Vnorm[\Vlysqrt]{\delta_x \RKer_{\gaStep}^p-\pi_{\gaStep}} = 0$ for
  all $x \in \rset^d$. Finally,
  \eqref{eq:convergence_TV_gaStep_const_1} shows that for all $x \in
  \rset^d$,
\begin{equation*}
\Vnorm[\Vlysqrt]{\pi-\pi_{\gaStep}} \leq \lim_{p \to \plusinfty}\defEns{ \Vnorm[\Vlysqrt]{\delta_x  \RKer_{\gaStep}^p-\pi}+  \Vnorm[\Vlysqrt]{\delta_x  \RKer_{\gaStep}^p-\pi_{\gaStep}} } \leq \boundDeuxTVa \eqsp.
\end{equation*}
Taking the minimum over $x \in \rset^d$ concludes the proof.
\end{proof}
  Note that \Cref{theo:convergence_TV_gaStep_const} implies that there exists
  a constant $C\geq 0$ which does not depend on $\gaStep$ such that $
  \Vnorm[\Vlysqrt]{\pi-\pi_{\gaStep}} \leq C \gaStep^{1/2}$.
\begin{remark}
  It is shown in \cite[Theorem 4]{talay:tubaro:1991} that for $\phi
  \in C^{\infty}(\rset^d)$ with polynomial growth,
  $\pi_{\gaStep}(\phi) - \pi(\phi) = b(\phi) \gaStep +
  \bigO(\gaStep^2)$, for some constant $b(\phi) \in \rset$, provided
  that $U \in C^{\infty}(\rset^d)$ satisfies \Cref{assum:regularity}
  and \Cref{assum:drift-condition-reinforced}. Our result does not
  match this bound since $\boundDeuxTVab=
  \bigO(\gaStep^{1/2})$. However the bound $\boundDeuxTVab$ is uniform
  over the class of measurable functions $\phi$ satisfying for all
  $x\in \rset^d$, $\abs{\phi(x)} \leq V^{1/2}(x)$. Obtaining such
    uniform bounds in total variation is important in Bayesian
    inference, for example to compute high posterior density credible regions.
  Our result also strengthens and completes \cite[Corollary
  7.5]{mattingly:stuart:higham:2002}, which states that under
  \Cref{assum:drift-condition-reinforced} with $\alpha =2$, for any
  measurable functions $\phi : \rset^d \to \rset$ satisfying for all
  $x,y \in \rset^d$,
\begin{equation*}
  \abs{\phi(x)-\phi(y)} \leq C \norm{x-y}\{ 1+\norm{x}^{k} + \norm{y}^k\}  \eqsp,
\end{equation*}
for some $C \geq 0$, $k \geq 1$,
 $\abs{\pi_{\gaStep}(\phi) - \pi(\phi)}
\leq C \gaStep^{\chi}$ for some constants $C \geq 0$ and $\chi \in
\ooint{0,1/2}$, which does not depend on $\phi$.
\end{remark}

The bounds in \Cref{theo:convergence_TV_non_quantitatif} and \Cref{theo:convergence_TV_gaStep_const} depend upon the constants  appearing in \Cref{coro:poincare-constant-lyapunov} which are computable but are known to be pessimistic in general; see \cite{roberts:tweedie:2000}.
 More explicit  rates of convergence for the semigroup can be obtained using  Poincar{\'e} inequality; see~\cite{bakry:cattiaux:guillin:2008}, \cite{cattiaux:guillin:2009} and \cite[Chapter~4]{bakry:gentil:ledoux:2014} and the references therein. The probability measure
$\pi$ is said to satisfy a Poincar{\'e} inequality with the constant  $\CPoincare$ if for every locally Lipschitz function $h$,
\begin{equation}
\label{eq:poincare}
  \VarDeux{\pi}{h} \leq \CPoincare \int_{\rset^d} \norm[2]{\nabla h(x)} \pi(\rmd x) \eqsp.
\end{equation}
This inequality implies by \cite[Theorem~2.1]{cattiaux:guillin:2009} that for all $t \geq 0$ and any initial distribution $\mu_0$, such that $\mu_0 \ll \pi$,
\begin{equation}
\label{eq:borne-poincare-TV}
\norm{\mu_0 \PL_t - \pi}_{\TV} \leq \exp(- t/ \CPoincare) \left( \VarDeux {\pi}{\rmd \mu_0/ \rmd \pi}\right)^{1/2} \eqsp.
\end{equation}
 \cite[Theorem~1.4]{bakry:barthe:cattiaux:guillin:2008} shows that if the Lyapunov condition \eqref{eq:lyapunov-condition-V} is satisfied,  then the Poincar{\'e} inequality \eqref{eq:poincare} holds with an explicit constant. Denote by
\begin{equation}
\label{eq:bornevar}
\bornevar \eqdef  \parenthese{4 \uppi \defEns{\prod_{k=1}^n (1-L \gaStep_k)}^2\sum_{i=1}^n \gaStep_i(1-L\gaStep_i)^{-1}}^{-d/2} \eqsp.
\end{equation}
\begin{theorem}
\label{theo:convergence_TV_quantitatif_Poincare}
Assume \Cref{assum:regularity} and  \Cref{assum:drift-condition-reinforced}. Let $(\gaStep_k)_{k \geq 1}$ be a non increasing sequence. Then for all $n \geq 1$ and $x \in \rset^d$, \Cref{eq:ergodicity-langevin-expo} holds with
\begin{align*}
&\log(\kappa) = \parenthese{- \thetaUndemi^{-1} \left\{ 1 + (4\bUndemi \KUndemi^2/ \uppi^2) \rme^{\osc[\KUndemi]{U}} \right\}}^{-1} \eqsp, \\
&C(\delta_x \QKer^n_{\gaStep}) \leq \frac{ (\alpha+1)^d (2\pinumber)^{(d+1)/2} (d-1)!}{\rho ^d \Gammabf((d+1)/2)} \bornevar  \rme^{ \Calpha} \, \rme^{U(x)}
\eqsp,
\end{align*}
where $\Gammabf$ is the Gamma function and the constants $\bUndemi ,\thetaUndemi,\KUndemi, \Calpha$ are given in \Cref{prop:poincare-constant-lyapunov} and \eqref{eq:calpha} respectively.
\end{theorem}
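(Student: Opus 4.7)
The two estimates will be established separately, both relying on a global Poincaré inequality for $\pi$.

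I would first derive the rate $\log \kappa$ via \cite[Theorem~1.4]{bakry:barthe:cattiaux:guillin:2008}, which converts a Lyapunov drift together with a local Poincaré inequality on the compact drift set into a global Poincaré inequality, yielding an explicit upper bound of the form $\CPoincare \leq \theta^{-1}(1 + \beta\, C_{\mathrm{loc}})$. I would apply this with the Lyapunov function $\VSAlpha(x) = \rme^{U(x)/2}$ of \Cref{prop:poincare-constant-lyapunov} taken at $\varsigma = 1/2$, whose drift parameters $\thetaUndemi$, $\bUndemi$ and compact set $\ball{\globmin}{\KUndemi}$ are explicit there. The local Poincaré constant for $\pi$ restricted and normalised on $\ball{\globmin}{\KUndemi}$ I would bound by $(2\KUndemi/\uppi)^2 \rme^{\osc[\KUndemi]{U}}$: the geometric factor $(2K/\uppi)^2$ is the Payne--Weinberger bound on the Poincaré constant of a convex body of diameter $2\KUndemi$, and the exponential factor comes from a Holley--Stroock-type perturbation comparing the restriction of $\pi$ to the ball with the uniform measure, whose density ratio is controlled by $\rme^{\osc_{\KUndemi} U}$. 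Substituting into \eqref{eq:borne-poincare-TV} then produces \eqref{eq:ergodicity-langevin-expo} with the announced value of $\log \kappa$, which is exactly $-1/\CPoincare$ for the explicit bound obtained.

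For the prefactor $C(\delta_x \QKer_{\gaStep}^n)$, I would read off from \eqref{eq:borne-poincare-TV} the admissible choice $C(\mu) = \mathrm{Var}_\pi(\densityLigne{\pi}{\mu})^{1/2}$. Writing $\mu_n = \delta_x \QKer^n_\gaStep$, which admits a Lebesgue density $q_n(x, \cdot)$ since the kernels $\RKer_{\gaStep_k}$ are Gaussian, one has $\densityLigne{\pi}{\mu_n}(y) = Z\, q_n(x, y)\,\rme^{U(y)}$ with $Z = \int \rme^{-U(y)}\,\rmd y$. Using $\mathrm{Var}_\pi \leq \int (\cdot)^2 \rmd\pi$, inserting this expression, and invoking the pointwise estimate $q_n(x, y) \leq \bornevar$ (see the next paragraph) gives $\mathrm{Var}_\pi(\densityLigne{\pi}{\mu_n}) \leq Z\, \bornevar \int q_n(x,y)\rme^{U(y)}\,\rmd y = Z\,\bornevar\,\QKer^n_\gaStep (\rme^U)(x)$. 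The moment $\QKer^n_\gaStep(\rme^U)(x)$ I would control by a multiple of $\rme^{U(x)}$ through a Foster--Lyapunov-type computation for $\rme^U$, carried out as in the proof of \Cref{prop:drift-R-gaStep} with $\varsigma = 1$ (this is what produces the $\rme^{U(x)}$ dependence in the stated bound). For the normalising constant $Z$, I would invoke \Cref{lem:majo:U} to get $U(y) \geq (\rho/(\alpha+1))\norm{y - \globmin}^\alpha - \Calpha$; exploiting the elementary bound $r^\alpha \geq r$ valid for $r \geq 1$ and $\alpha \in \ocint{1,2}$, with the contribution of $r \leq 1$ absorbed into the combinatorial prefactor, spherical integration produces $Z \leq \rme^{\Calpha}(\alpha+1)^d(d-1)!(2\pinumber)^{(d+1)/2}/\{\rho^d \Gammabf((d+1)/2)\}$, which assembles with the other two pieces to yield the announced inequality.

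The main technical hurdle is the uniform density estimate $\sup_y q_n(x, y) \leq \bornevar$. A naive induction on the one-step kernels pushes the density of $\mu_{k-1}$ forward through $\Phi_{\gaStep_k}(y) = y - \gaStep_k \nabla U(y)$, whose Jacobian under \Cref{assum:regularity} has singular values in $\ccint{1 - L\gaStep_k,\, 1+L\gaStep_k}$; this alone multiplies $\normsup{\mu_{k-1}}$ by $(1-L\gaStep_k)^{-d}$ per step and cannot yield the correct bound. One must instead propagate a Gaussian upper envelope that jointly accounts for the push-forward Jacobian and the smoothing effect of the subsequent convolution with $\mathcal{N}(0, 2\gaStep_k I_d)$; solving the resulting covariance recurrence produces exactly the effective variance $\{\prod_k(1-L\gaStep_k)\}^2 \sum_i \gaStep_i(1-L\gaStep_i)^{-1}$ appearing in $\bornevar$. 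Once this density bound and the Foster--Lyapunov moment bound $\QKer^n_\gaStep(\rme^U)(x) \lesssim \rme^{U(x)}$ are in hand, the three pieces combine to yield the theorem.
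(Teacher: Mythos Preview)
Your derivation of $\log\kappa$ via \cite[Theorem~1.4]{bakry:barthe:cattiaux:guillin:2008} applied with the Lyapunov data of \Cref{prop:poincare-constant-lyapunov} at $\varsigma=1/2$, together with a Payne--Weinberger/Holley--Stroock local Poincar\'e constant on $\ball{\globmin}{\KUndemi}$, is exactly what the paper does.

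The plan for $C(\delta_x\QKer_\gaStep^n)$, however, has a genuine gap. You factor $\VarDeux{\pi}{\densityPiLigne{\delta_x\QKer_\gaStep^n}}\le Z\,\bornevar\,\QKer_\gaStep^n(\rme^U)(x)$ and then invoke a Foster--Lyapunov estimate $\QKer_\gaStep^n(\rme^U)(x)\lesssim\rme^{U(x)}$. This step fails: redoing the one-step computation of \Cref{prop:drift-R-gaStep} with $V=\rme^{U}$ in place of $\rme^{U/2}$ produces a ratio $R_\gaStep(\rme^U)(x)/\rme^{U(x)}$ containing a factor $\exp\{c\,\gaStep^2\norm[2]{\nabla U(x)}\}$ with $c>0$, unbounded in $x$, so no inequality of the form \eqref{eq:foster-lyapunov-drift} holds for $\rme^U$; equivalently, the contracting term $-\varsigma(1-\varsigma)\norm[2]{\nabla U}$ that drives \Cref{prop:poincare-constant-lyapunov} vanishes at $\varsigma=1$. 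Conceptually, $\rme^U$ is not $\pi$-integrable, so it cannot serve as a Lyapunov function. Your purely Gaussian envelope for $q_\gaStep^n\le\bornevar$ is also off: pushing a Gaussian through $\Phi_\gaStep(z)=z-\gaStep\nabla U(z)$ uses $\norm{\Phi_\gaStep(z)-\Phi_\gaStep(m)}\le(1+L\gaStep)\norm{z-m}$, leading to the recurrence $\tau_k=(1+L\gaStep_k)^2\tau_{k-1}+2\gaStep_k$, which does not reproduce the variance appearing in $\bornevar$.

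The paper avoids both obstacles at once by building the potential into the envelope. Under \Cref{assum:regularity} one has $r_\gaStep(x,y)\le(4\pinumber\gaStep)^{-d/2}\exp\{(U(x)-U(y))/2-(1-L\gaStep)\norm[2]{y-x}/(4\gaStep)\}$; the factors $\rme^{(U(\cdot)-U(\cdot))/2}$ telescope under composition, yielding by induction $q_\gaStep^n(x,y)\le C_n\,\rme^{(U(x)-U(y))/2}\,\rme^{-\norm[2]{y-x}/(2\sigma_{\gaStep,n})}$ with $\sigma_{\gaStep,n}=2\sum_i\gaStep_i(1-L\gaStep_i)^{-1}$. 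The weight $\rme^{-U(y)/2}$ is the point: in $\VarDeux{\pi}{\cdot}\le Z\int q_\gaStep^n(x,y)^2\rme^{U(y)}\,\rmd y$ it cancels $\rme^{U(y)}$ exactly after squaring, leaving a pure Gaussian integral in $y$ (which produces $\bornevar$) and a single factor $\rme^{U(x)}$, with no moment of $\rme^U$ under $\QKer_\gaStep^n$ ever needed.
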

\begin{proof}
The proof is postponed to \Cref{sec:theo:convergence_TV_quantitatif_Poincare}.
\end{proof}
Note that for all $x \in \rset^d$, $C(\delta_x \QKer^n_{\gaStep})$ satisfies the conditions of \Cref{propo:limit-zeros-tv-1}-\ref{propo:limit-zeros-tv-1b}. Therefore using in addition the bound on $A(\gaStep,x)$ for all $x \in \rset^d$ and $\gaStep \in \ooint{0,L^{-1}}$ given in \Cref{theo:convergence_TV_non_quantitatif}, we get $\lim_{k \to \plusinfty } \tvnorm{\delta_x \QKer^p_{\gaStep}-\pi} = 0$ if $\lim_{n \to \plusinfty} \GaStep_n = \plusinfty$ and $\lim_{n \to \plusinfty} \sum_{k=1}^n \gaStep_k^2 < \plusinfty$.
\subsection{Log-concave densities}
\label{subsec:log-concave}
We now consider the following additional assumption.
\begin{assumption}
  \label{assum:convex}
  $U$ is convex and admits a minimizer $\xstar$ for $\VUl$. Moreover there exist $\rhoUl >0$ and $\RUl \geq 0$ such that for all $x \in \rset^d$, $\norm{x-\xstar} \geq \RUl$,
\begin{equation}
\label{eq:superexpo_potential}
\VUl(x) - \VUl(\xstar) \geq \rhoUl \norm{x-\xstar} \eqsp.
\end{equation}
\end{assumption}
It is shown in \cite[Lemma 2.2]{bakry:barthe:cattiaux:guillin:2008}
that if $U$ satisfies \Cref{assum:regularity} and is convex, then
\eqref{eq:superexpo_potential} holds for some constants $\rhoUl,\RUl$ which
depend in an intricate way on $U$. Since the constants $\rhoUl,\RUl$ appear explicitly in the bounds we derive, we must assume that these constants are explicitly computable. We still assume in this
section that $U(\xstar)=0$.  Define the function $\VEa : \rset^d \to
\coint{1,\plusinfty}$ for all $x \in \rset^d$ by
\begin{equation}
\label{eq:def_Wc}
\VEa(x) = \exp((\rhoU/4)(\norm{x-\xstar}^2+1)^{1/2}) \eqsp.
\end{equation}
We now derive a drift inequality for $\RKer_{\gaStep}$ under \Cref{assum:convex}.
\begin{proposition}
  \label{propo:drift-Euler}
Assume \Cref{assum:regularity} and \Cref{assum:convex}. Let $\bargaStep \in \ocint{0,\LUl^{-1}}$.
Then for all   $\gaStep \in \ocint{0,\bargaStep}$,
$\VEa$ satisfies \eqref{eq:foster-lyapunov-drift} with $\thetaEa = \rme^{-2^{-4} \rhoUl^2(2^{1/2}-1)} $, $\KUa = \max(1,2 d / \rhoUl,\RUl)$,
\begin{equation}
\label{eq:def_bEa}
  \bEa =  \defEns{ (\rhoUl/4)(d+(\rhoUl \bargaStep/4))  - \log( \thetaEa )}  \rme^{\rhoUl (\KUa^2+1)^{1/2}/4 + (\rhoUl \bargaStep/4)(d+(\rhoUl \bargaStep/4))} \eqsp.
\end{equation}
\end{proposition}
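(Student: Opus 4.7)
The plan is to carry out a one-step drift analysis for the Euler update $X_{k+1} = y + \sqrt{2\gaStep}\,Z$, where $y = x - \gaStep\nabla U(x)$ and $Z$ is a standard $d$-dimensional Gaussian. Writing $\phi(x) = (\|x-\xstar\|^2+1)^{1/2}$, so that $\VEa(x) = \exp(\rhoUl \phi(x)/4)$, I would decompose the computation into a deterministic drift step (from $x$ to $y$) and a Gaussian diffusion step, and then balance the two contributions. For the drift step, co-coercivity of $\nabla U$ (from convexity together with \Cref{assum:regularity}) combined with $\gaStep \LUl \leq 1$ yields $\|y-\xstar\|^2 \leq \|x-\xstar\|^2 - \gaStep\langle \nabla U(x), x-\xstar\rangle$; in particular $\phi(y) \leq \phi(x)$. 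For $\|x-\xstar\| \geq \RUl$, convexity and \Cref{assum:convex} give $\langle \nabla U(x), x-\xstar\rangle \geq U(x) - U(\xstar) \geq \rhoUl \|x-\xstar\|$, so that $\phi^2(y) \leq \phi^2(x) - \gaStep\rhoUl \|x-\xstar\|$.

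For the diffusion step, I would start from $\phi^2(y+v) = \phi^2(y) + 2\langle v, y-\xstar\rangle + \|v\|^2$ and apply $\sqrt{1+t} \leq 1 + t/2$ to get
\[
\phi(y+v) \leq \phi(y) + \langle v, u(y)\rangle + \frac{\|v\|^2}{2\phi(y)}, \qquad u(y) = \frac{y-\xstar}{\phi(y)}, \ \|u(y)\| \leq 1.
\]
Substituting $v = \sqrt{2\gaStep}\,Z$ and exponentiating gives $\VEa(y+\sqrt{2\gaStep}\,Z) \leq \VEa(y)\exp(\alpha\langle Z, u\rangle + \beta\|Z\|^2)$ with $\alpha = \rhoUl\sqrt{2\gaStep}/4$ and $\beta = \rhoUl\gaStep/(4\phi(y))$; its Gaussian expectation equals $(1-2\beta)^{-d/2}\exp(\alpha^2\|u\|^2/(2(1-2\beta)))$, which is finite since $\phi(y) \geq 1$ and $\gaStep \leq \bargaStep \leq \LUl^{-1}$. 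A careful estimate based on $\phi(y) \geq 1$ produces the uniform bound $\log\PE[\cdot] \leq \gaStep\mathsf{C}$ with $\mathsf{C} = (\rhoUl/4)(d + \rhoUl\bargaStep/4)$, the constant that appears in $\bEa$.

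The proof then splits into two cases. Outside the ball $\{\|x-\xstar\| \geq \KUa\}$, I would use the sharper inequality $\sqrt{1-s} \leq 1 - (\sqrt 2 - 1)s$ on $[0,1]$ (verified by squaring both sides) applied to $\phi(y) \leq \phi(x)\sqrt{1-\gaStep\rhoUl\|x-\xstar\|/\phi^2(x)}$ to get $\phi(y) \leq \phi(x) - (\sqrt 2 - 1)\gaStep\rhoUl\|x-\xstar\|/\phi(x)$; the lower bound $\|x-\xstar\|/\phi(x) \geq 1/\sqrt 2$ (which holds since $\KUa \geq 1$) then produces a decrease of $\log\VEa$ of order $\gaStep\rhoUl^2(\sqrt 2 - 1)/2^{5/2}$, and the requirement $\KUa \geq 2d/\rhoUl$ is exactly what is needed to dominate the sharper MGF estimate at $\phi(y) \gtrsim \KUa$ and push the net decrease down to $\gaStep\log\thetaEa = -\gaStep\rhoUl^2(\sqrt 2 - 1)/16$. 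Inside the ball, $\|y-\xstar\| \leq \|x-\xstar\| \leq \KUa$, so $\VEa(y) \leq \exp(\rhoUl(\KUa^2+1)^{1/2}/4)$; combining with the uniform MGF bound and the elementary inequality $e^A - e^B \leq (A-B)e^A$ applied to $A = \gaStep\mathsf{C}$, $B = \gaStep\log\thetaEa$ yields
\[
\RKer_\gaStep \VEa(x) - \thetaEa^\gaStep \VEa(x) \leq \gaStep(\mathsf{C} - \log\thetaEa)\exp(\rhoUl(\KUa^2+1)^{1/2}/4 + \bargaStep\mathsf{C}) = \gaStep\bEa,
\]
which is exactly the stated formula.

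The main obstacle is the calibration outside the ball: the factor $\sqrt 2 - 1$ in $\thetaEa$ is dictated by the sharper square-root inequality above, but the delicate point is that the $d$-linear contribution $d\beta \propto d/\phi(y)$ of the Gaussian MGF must be dominated by the only linear-in-$\|x-\xstar\|$ drift coming from \Cref{assum:convex}; the choice $\KUa \geq 2d/\rhoUl$ is essentially the threshold at which this balance flips in favour of the drift, and the stated $\log\thetaEa = -\rhoUl^2(\sqrt 2 - 1)/16$ is the residual decrease once the MGF contribution has been subtracted from the drift.
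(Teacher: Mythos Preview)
Your overall strategy—separating the Euler step into a deterministic drift $x\mapsto y$ and a Gaussian perturbation, then balancing the two—is reasonable, but as written it does not reach the stated constants, and the paper's route is structurally different and cleaner.

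Two concrete errors. First, your ``sharper'' inequality $\sqrt{1-s}\le 1-(\sqrt2-1)s$ is actually \emph{weaker} than the elementary $\sqrt{1-s}\le 1-s/2$ (since $\sqrt2-1<1/2$); calling it sharper is a slip, and using it costs you contraction. Second, your drift step only gives $\|y-\xstar\|^2\le\|x-\xstar\|^2-\gaStep\langle\nabla U(x),x-\xstar\rangle$, whereas the refined co-coercivity bound $\langle\nabla U(x),x-\xstar\rangle\ge(2L)^{-1}\|\nabla U(x)\|^2+\rhoUl\|x-\xstar\|$ (Nesterov, Theorem 2.1.5) yields $\|y-\xstar\|^2\le\|x-\xstar\|^2-2\gaStep\rhoUl\|x-\xstar\|$, a factor of $2$ better. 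With your weaker square-root bound \emph{and} the lost factor of $2$, the drift term alone is already too small: even before adding the MGF contribution you get a rate $-\gaStep\rhoUl^2(1-1/\sqrt2)/4+\gaStep\rhoUl^2/16\approx -0.011\,\gaStep\rhoUl^2$, which is strictly above $\gaStep\log\thetaEa=-\gaStep\rhoUl^2(\sqrt2-1)/16\approx -0.026\,\gaStep\rhoUl^2$. There is also a tacit finiteness issue in your MGF: you need $2\beta=\rhoUl\gaStep/(2\phi(y))<1$, which is not a consequence of $\gaStep\le L^{-1}$ alone.

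The paper avoids all of this by a different mechanism: since $\phi$ is $1$-Lipschitz, Gaussian concentration (the Herbst argument from the log-Sobolev inequality) gives in one stroke
\[
\RKer_\gaStep\VEa(x)\le\exp\bigl((\rhoUl/4)\,\RKer_\gaStep\phi(x)+(\rhoUl/4)^2\gaStep\bigr),
\]
and then Jensen yields $\RKer_\gaStep\phi(x)\le\bigl(\|y-\xstar\|^2+2\gaStep d+1\bigr)^{1/2}$. The crucial point is that the noise term $2\gaStep d$ now sits \emph{inside} the same square root as the drift deficit $-2\gaStep\rhoUl\|x-\xstar\|$; for $\|x-\xstar\|\ge 2d/\rhoUl$ one has $\rhoUl\|x-\xstar\|-d\ge\rhoUl\|x-\xstar\|/2$, so the standard inequality $(1-u)^{1/2}-1\le -u/2$ delivers the decrease $-2^{-3/2}\gaStep\rhoUl$ directly, and $\thetaEa$ drops out with no further balancing. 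In your scheme the $d$-term lives in the MGF, \emph{outside} the square root, and must be beaten separately by the (already weakened) drift—this is why your threshold $\KUa\ge 2d/\rhoUl$ does not close the gap.
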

\begin{proof}
  The proof is postponed to \Cref{sec:proof-drift-conv}
\end{proof}

\begin{corollary}
\label{coro:convex_A_gamma}
Assume \Cref{assum:regularity} and \Cref{assum:convex}. Let $(\gaStep_k)_{ k\geq 1}$ be a nonincreasing sequence with $\gaStep_1 \leq  \bargaStep$,  $\bargaStep \in \ocint{0,\LU^{-1}}$.
Then, for all $n\geq 0$, $p \geq 1$, $n<p$, and $x \in \rset^d$, 
\begin{equation}
\label{eq:borne_A_conv}
 A(\gaStep,x)  = \LU^2 \parenthese{4 \rhoUl^{-1} \parentheseDeux{ 1+\log \defEns{G(\thetaEa,\bEa,\gaStep_1,\VEa(x))}}}^2 \eqsp,
\end{equation}
where $A(\gaStep,x)$ is defined by \eqref{eq:eq_base1} and  $\FunfoD$,  $\VEa$, $\thetaEa$, $\bEa$,  are given in 
\eqref{eq:def_FunfoD},  \eqref{eq:def_Wc}, \Cref{propo:drift-Euler}  respectively.
\end{corollary}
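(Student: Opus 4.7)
The plan is to combine two ingredients: (i) the Lipschitz/convexity structure, which converts $\|\nabla U(z)\|^2$ into a polynomial in $\|z-\xstar\|$, and (ii) the drift inequality of \Cref{propo:drift-Euler}, which controls exponential moments of $\|z-\xstar\|$ via $\VEa$. The goal is to translate the latter moment bound into a second-moment bound on $\|z-\xstar\|$ of logarithmic size in the Lyapunov bound $G$.

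First I would observe that, since $U$ is convex under \Cref{assum:convex} and attains its minimum at $\xstar$, one has $\nabla U(\xstar)=0$. Combined with \Cref{assum:regularity}, this gives the pointwise bound
\begin{equation*}
\|\nabla U(z)\|^2 \leq \LU^2 \|z-\xstar\|^2 \qquad \text{for all } z \in \rset^d.
\end{equation*}
Next, I would read off the definition \eqref{eq:def_Wc} to get the key inversion: $(\|z-\xstar\|^2+1)^{1/2}=(4/\rhoUl)\log \VEa(z)$, hence
\begin{equation*}
\|z-\xstar\|^2 \leq \|z-\xstar\|^2+1 = (4/\rhoUl)^2 (\log \VEa(z))^2.
\end{equation*}

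Second, I would invoke \Cref{propo:drift-Euler}: for every $\gaStep \in \ocint{0,\bargaStep}$, $\RKer_\gaStep$ satisfies the drift condition \eqref{eq:foster-lyapunov-drift} with Lyapunov function $\VEa$, rate $\thetaEa$ and constant $\bEa$. Applying \Cref{lem:bound_inho_moment} to the nonincreasing sequence $(\gaStep_k)$ with $\gaStep_1 \leq \bargaStep$ yields
\begin{equation*}
\sup_{k\geq 0}\, \QKer_\gaStep^k \VEa(x) \leq \FunfoD(\thetaEa,\bEa,\gaStep_1,\VEa(x)) \eqdef G.
\end{equation*}

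Third, and this is the crux of the estimate, I would convert the exponential moment bound $\QKer_\gaStep^k \VEa(x)\leq G$ into a bound on the second moment of $\log \VEa$ under $\QKer_\gaStep^k(x,\cdot)$. By Markov's inequality applied to $\VEa$, for every $t\geq 0$,
\begin{equation*}
\QKer_\gaStep^k(x,\{z: \log \VEa(z)\geq t\}) \leq \min(1, G\rme^{-t}).
\end{equation*}
Then the layer-cake formula gives
\begin{equation*}
\int (\log \VEa(z))^2\, \QKer_\gaStep^k(x,\rmd z) = \int_0^\infty 2t\, \QKer_\gaStep^k(x,\{\log \VEa \geq t\})\,\rmd t,
\end{equation*}
which splits at $t_0=\log G$ into $\int_0^{t_0} 2t\,\rmd t + G\int_{t_0}^\infty 2t\rme^{-t}\rmd t$ and is bounded in closed form by $(1+\log G)^2$ up to an absolute constant. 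Multiplying by $\LU^2(4/\rhoUl)^2$ and taking the supremum over $k$ delivers the claimed bound \eqref{eq:borne_A_conv}.

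The main obstacle is the third step: getting the clean form $(1+\log G)^2$ rather than $(1+\log G)^2$ plus a small additive constant requires a careful execution of the tail-integration argument (choosing the split point $t_0=\log G$ and combining the two pieces so that the additive $O(1)$ term is absorbed). The first two steps are purely algebraic manipulations of \Cref{assum:regularity}--\Cref{assum:convex} and the explicit form of $\VEa$, while the invocation of \Cref{propo:drift-Euler} and \Cref{lem:bound_inho_moment} is immediate.
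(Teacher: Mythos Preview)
Your overall plan is sound and the first two steps match the paper exactly: reduce $\|\nabla U(z)\|^2\le L^2\|z-\xstar\|^2$ via $\nabla U(\xstar)=0$ and \Cref{assum:regularity}, then control second moments of $\|z-\xstar\|$ through the drift on $\VEa$ from \Cref{propo:drift-Euler} and \Cref{lem:bound_inho_moment}. Where you diverge is in how you pass from the exponential moment bound $\QKer_\gaStep^k \VEa(x)\le G$ to a bound on $\int\|z-\xstar\|^2\,\QKer_\gaStep^k(x,\rmd z)$.

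The paper does not use your pointwise inequality $\|z-\xstar\|^2\le (4/\rhoUl)^2(\log \VEa(z))^2$ followed by a layer-cake estimate. Instead it applies Jensen's inequality directly to the convex function
\[
\phi(t)=\exp\!\big((\rhoUl/4)\,(t+(4/\rhoUl)^2)^{1/2}\big),\qquad t\ge 0,
\]
which satisfies $\phi(\|z-\xstar\|^2)\le \rme\,\VEa(z)$; this yields $\phi\big(\int\|z-\xstar\|^2\,\QKer_\gaStep^k(x,\rmd z)\big)\le \rme\,G$, and inverting $\phi$ gives exactly $(4\rhoUl^{-1}(1+\log G))^2$ with no leftover constant. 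Your route is legitimate and gives the same dependence in $G$, but as you correctly anticipated, the tail integration produces $(1+\log G)^2+1$ rather than $(1+\log G)^2$: the split at $t_0=\log G$ leaves $G\int_{t_0}^\infty 2t\rme^{-t}\rmd t=2(1+\log G)$, and the pieces combine to $(1+\log G)^2+1$. This extra $+1$ cannot be absorbed in general (it would require $\rhoUl\ge 4$), so your argument proves a slightly weaker inequality than the one stated. The Jensen trick avoids this loss because it compares expectations before inverting, rather than bounding pointwise and then integrating.
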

\begin{proof}
The proof is postponed to \Cref{sec:proof-crefth-stepsb_coro}.
\end{proof}

If $U$ is convex, \cite[Theorem~1.2]{bobkov:1999} shows that $\pi$ satisfies a
Poincar{\'e} inequality with a constant depending only on the variance of $\pi$.
\begin{theorem}
\label{theo:convergence_TV_dec-stepsize-Bob}
Assume \Cref{assum:regularity} and \Cref{assum:convex}. Let $(\gaStep_k)_{ k\geq 1}$ be a nonincreasing sequence with $\gaStep_1 \leq  \bargaStep$,  $\bargaStep \in \ocint{0,\LU^{-1}}$.
Then, for all $n\geq 0$, $p \geq 1$, $n<p$, and $x \in \rset^d$, \eqref{eq:eq_base} holds with $A(\gaStep,x)$ given in \eqref{eq:borne_A_conv},
\begin{subequations}
\begin{align}
\label{eq:constant_bobkov}
\log(\kappa) &= \parenthese{-432 \int_{\rset^d} \norm[2]{ x - \int_{\rset^d} y \pi (\rmd y)} \pi(\rmd x)}^{-1} \\
\label{eq:Cdelta_xQ_gamma_bobkov}
C(\delta_x \QKer_{\gaStep}^n) & = \parenthese{\frac{(2\pinumber)^{(d+1)/2}(d-1)!}{\rhoU^d \Gammabf((d+1)/2)} +
   \frac{\pinumber^{d/2} \RUl^{d}}{\Gammabf(d/2+1)}} \bornevar \exp(U(x)) \eqsp,
\end{align}
\end{subequations}
where $\bornevar$ is given in \eqref{eq:bornevar}.
\end{theorem}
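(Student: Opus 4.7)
My plan is to feed the three quantities $A(\gaStep,x)$, $\kappa$, and $C(\delta_x\QKer_\gaStep^n)$ into \Cref{propo:girsanov_comparison} and read off \eqref{eq:eq_base}. The bound on $A(\gaStep,x)$ is already established in \Cref{coro:convex_A_gamma}, so nothing further is required there. For the Langevin contraction rate, the key observation is that convexity of $U$ in \Cref{assum:convex} makes $\pi$ log-concave on $\rset^d$, so Bobkov's theorem \cite[Theorem~1.2]{bobkov:1999} provides a Poincar\'e inequality \eqref{eq:poincare} with the explicit constant $\CPoincare \leq 432 \int_{\rset^d}\|x-\int y\,\pi(\rmd y)\|^2\pi(\rmd x)$. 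Plugging this into \eqref{eq:borne-poincare-TV} immediately gives $\log\kappa = -\CPoincare^{-1}$ as in \eqref{eq:constant_bobkov}.

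The real work is in bounding $C(\delta_x\QKer_\gaStep^n)$. From \eqref{eq:borne-poincare-TV} one identifies $C(\mu_0) = (\VarDeux{\pi}{\rmd\mu_0/\rmd\pi})^{1/2}$ for $\mu_0 = \delta_x\QKer_\gaStep^n$. Writing $q_n$ for the Lebesgue density of $\mu_0$ and $Z=\int_{\rset^d} e^{-U(y)}\rmd y$, the chi-squared-type quantity to control is
\[
\VarDeux{\pi}{\rmd\mu_0/\rmd\pi} + 1 = Z \int_{\rset^d} q_n(y)^2 e^{U(y)}\rmd y .
\]
I would combine a uniform pointwise $L^\infty$ bound $q_n(y) \leq \bornevar$ on the Euler density (with $\bornevar$ from \eqref{eq:bornevar}) with the linear lower bound on $U$ provided by \Cref{assum:convex}. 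The $L^\infty$ estimate is exactly the one derived in the proof of \Cref{theo:convergence_TV_quantitatif_Poincare}: iterate the Gaussian convolution structure of $\RKer_\gaStep$ and track the Jacobian of the drift map $x\mapsto x-\gaStep\nabla U(x)$, which is bounded below by $(1-\gaStep L)^d$ under \Cref{assum:regularity} with $\gaStep L<1$. Following the same algebra as in that proof delivers a bound of the form $C(\delta_x\QKer_\gaStep^n)\leq Z\,\bornevar\,\exp(U(x))$.

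It then remains to estimate $Z$ under \Cref{assum:convex}. I split $\rset^d = \ball{\xstar}{\RUl}\cup\ball{\xstar}{\RUl}^c$: inside the ball the minimising property $U(y)\geq U(\xstar)=0$ contributes the volume term $\pinumber^{d/2}\RUl^d/\Gammabf(d/2+1)$; outside the ball, \eqref{eq:superexpo_potential} gives $e^{-U(y)}\leq e^{-\rhoUl\|y-\xstar\|}$, whose integral over $\rset^d$, computed by polar coordinates and tidied up with the Legendre duplication formula $\Gammabf(d/2)\Gammabf((d+1)/2)=2^{1-d}\sqrt{\pinumber}\Gammabf(d)$, produces the first parenthetical term in \eqref{eq:Cdelta_xQ_gamma_bobkov}. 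Summing the two contributions yields the stated prefactor. The principal obstacle is the $L^\infty$ bound on the Euler density $q_n$, which hinges on the careful inductive interplay between Gaussian convolutions $G_{2\gaStep_k}$ and the drift-induced change of variables; once this bound is imported from the proof of \Cref{theo:convergence_TV_quantitatif_Poincare}, the rest reduces to direct computation.
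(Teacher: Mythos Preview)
Your overall architecture matches the paper: Bobkov's Poincar\'e inequality for log-concave measures supplies $\kappa$, \Cref{coro:convex_A_gamma} supplies $A(\gaStep,x)$, and the normalising-constant estimate (splitting $\rset^d$ into $\ball{\xstar}{\RUl}$ and its complement and integrating $e^{-\rhoUl\|y-\xstar\|}$ in polar coordinates) is exactly \Cref{lem:control_normalization_constantb}.

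The gap is in your treatment of the variance. You write that the relevant input is a \emph{uniform} $L^\infty$ bound $q_\gaStep^n(x,\cdot)\le \bornevar$ obtained by ``tracking the Jacobian of the drift map''. That is neither what the paper proves nor what would suffice. The bound actually established (the unnamed lemma preceding \Cref{lem:control_normalization_constant}) is
\[
q_\gaStep^n(x,y)\;\le\;(2\pinumber\sigma_{\gaStep,n}\textstyle\prod_i(1-L\gaStep_i))^{-d/2}\exp\!\bigl(\tfrac12(U(x)-U(y))-\tfrac{1}{2\sigma_{\gaStep,n}}\|y-x\|^2\bigr),
\]
and it is obtained not via a change of variables but by inserting the descent inequality $U(y)\le U(x)+\langle\nabla U(x),y-x\rangle+\tfrac{L}{2}\|y-x\|^2$ into the Gaussian exponent of $r_\gaStep(x,y)$ and iterating. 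The factor $e^{-U(y)/2}$ is the whole point: when you square $q_\gaStep^n$ and multiply by $e^{U(y)}$ in $\VarDeux{\pi}{\rmd\mu_0/\rmd\pi}+1=Z\int q_\gaStep^n(x,y)^2 e^{U(y)}\rmd y$, the $e^{-U(y)}$ cancels and you are left with a Gaussian integral, giving \Cref{lem:control_variance}. With only a constant bound $q_\gaStep^n\le\bornevar$ this cancellation is lost and the integral $\int q_\gaStep^n e^{U}$ you would be left with is not controlled by anything in the convex toolbox here (the drift in \Cref{propo:drift-Euler} is for $\VEa$, not for $e^{U}$, and the linear lower bound on $U$ from \Cref{assum:convex} only makes $e^{U}$ larger). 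So the step you flag as ``the principal obstacle'' is indeed the obstacle, but your proposed resolution does not clear it; import the weighted pointwise bound above rather than an $L^\infty$ one.
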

\begin{proof}
The proof is postponed to \Cref{sec:proof-crefth-stepsb}.
\end{proof}
For all $x \in \rset^d$, $C(\delta_x \QKer^n_{\gaStep})$ satisfies the conditions of \Cref{propo:limit-zeros-tv-1}-\ref{propo:limit-zeros-tv-1b}. Therefore, if $\lim_{n \to \plusinfty} \GaStep_n = \plusinfty$ and $\lim_{n \to \plusinfty} \sum_{k=1}^n \gaStep_k^2 < \plusinfty$, we get $\lim_{k \to \plusinfty } \tvnorm{\delta_x \QKer^p_{\gaStep}-\pi} = 0$.

There are two difficulties when applying
\Cref{theo:convergence_TV_dec-stepsize-Bob}. First the Poincaré
constant \eqref{eq:constant_bobkov} is in closed form but is not
computable, although it can be bounded by a $\bigO(d^{-2})$ . Second,
the bound of
$\VarDeuxLigne{\pi}{\rmd \delta_x \QKer_{\gaStep}^n / \rmd \pi}$  is likely to be suboptimal. To circumvent these two issues, we now give new
quantitative results on the convergence of $(\PL_t)_{t \geq 0}$ to
$\pi$ in total variation. Instead of using functional inequality, we
use in the proof the coupling by reflection, introduced in \cite{lindvall:rogers:1986}.  Define the function
$\Fsmall : \ooint{0,1} \times \rset_+^* \to \rset_+$ for all
$\epsilon \in \ooint{0,1}$ and $R \geq 0$, by
\begin{equation}
  \label{eq:Fsmall}
  \Fsmall(\epsilon,R) = R^{2}/ \defEns{2\bfPhi^{-1}(1-\epsilon/2)}^{2} \eqsp,
\end{equation}
where $\bfPhi$ is the cumulative distribution function of the standard
Gaussian distribution and $\bfPhi^{-1}$ is the associated quantile
function.   Before stating the theorem, we
first show that \eqref{eq:lyapunov-condition-V} holds and provide
explicit expressions for the constants which come into play. These
constants will be used to obtain the explicit convergence rate of the semigroup
$(\PL_t)_{t\geq 0}$ to $\pi$ which is derived in \Cref{theo:quantative-bound-lang-convex}.
\begin{proposition}
  \label{propo:drift_exp_conv}
Assume \Cref{assum:regularity} and \Cref{assum:convex}. Then $\VEa$ satisfies the drift condition \eqref{eq:lyapunov-condition-V} with
$\thetaOne= \rhoU^2/8$, $\Setdrift = \ball{\xstar}{\KOne}$, $\KOne = \max(1,\RUl,4d/ \rhoU)$ and
\begin{equation*}
\bOne =(\rhoU/4)\parenthese{(\rhoU/4)\KOne +d }
 \max \defEns{1,(\KOne^2 +1)^{-1/2} \exp(\rhoU( \KOne^2+1)^{1/2}/4)} \eqsp.
\end{equation*}
\end{proposition}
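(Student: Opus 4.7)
The plan is to establish the drift inequality $\geneLang \VEa \leq -\thetaOne \VEa + \bOne \1_{\Setdrift}$ by a direct computation of $\geneLang \VEa$, splitting the analysis into the far-field regime $\norm{x-\xstar} \geq \KOne$ and the compact small set $\Setdrift$.

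First, I would write $\VEa = \exp(\phi)$ with $\phi(x) = (\rhoU/4) u(x)$, $u(x) = (\norm{x-\xstar}^2 + 1)^{1/2}$, so that the chain rule gives
\begin{equation*}
\geneLang \VEa = \VEa \left[ -\ps{\nabla U}{\nabla \phi} + \Delta \phi + \norm{\nabla \phi}^2 \right] \eqsp.
\end{equation*}
Setting $r = \norm{x-\xstar}$, direct differentiation yields $\nabla \phi = (\rhoU/4)(x-\xstar)/u$, $\Delta \phi = (\rhoU/4)[d/u - r^2/u^3]$ and $\norm{\nabla \phi}^2 = (\rhoU/4)^2 r^2/u^2$.

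Next, in the far-field regime $r \geq \KOne$, I would use convexity of $U$ together with $\nabla U(\xstar) = 0$ and \Cref{assum:convex} to get, for $r \geq \RUl$, the lower bound $\ps{\nabla U(x)}{x-\xstar} \geq U(x) - U(\xstar) \geq \rhoU r$. Two further consequences of $\KOne = \max(1,\RUl, 4d/\rhoU)$ are crucial: $d \leq \rhoU r/4$ and $u \geq r \geq 4/\rhoU$, the latter ensuring that the combination $(\rhoU/4)^2 r^2/u^2 - (\rhoU/4) r^2/u^3 = (\rhoU r^2/(4 u^2))(\rhoU/4 - 1/u)$ is nonpositive. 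Discarding this contribution and using $d/u \leq (\rhoU/4)(r/u)$ would yield
\begin{equation*}
\geneLang \VEa(x)/\VEa(x) \leq -(\rhoU^2/4)(r/u) + (\rhoU^2/16)(r/u) = -(3\rhoU^2/16)(r/u) \eqsp.
\end{equation*}
Since $r \geq 1$ forces $r/u \geq 1/\sqrt{2} > 2/3$, one obtains $\geneLang \VEa \leq -(\rhoU^2/8) \VEa = -\thetaOne \VEa$ outside $\Setdrift$.

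On the small set $\Setdrift = \ball{\xstar}{\KOne}$, I would drop the nonpositive drift contribution $-\ps{\nabla U}{\nabla \phi}$ (valid by convexity and $\nabla U(\xstar) = 0$) and use $r^2/u^2 \leq \KOne r/u^2 \leq \KOne/u$ for $r \leq \KOne$, giving $\geneLang \VEa(x) \leq (\rhoU/4)((\rhoU/4)\KOne + d) \VEa(x)/u(x)$. The announced value of $\bOne$ then emerges by bounding $\sup_{x \in \Setdrift}(\geneLang \VEa + \thetaOne \VEa)$ and observing that $u \mapsto \exp((\rhoU/4) u)/u$ on $\ccint{1,(\KOne^2+1)^{1/2}}$ is unimodal (strictly decreasing then strictly increasing with minimum at $u = 4/\rhoU$), so its supremum is attained at an endpoint, which accounts for the $\max$ in the expression for $\bOne$. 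The main obstacle will be the far-field step: matching the precise constant $\thetaOne = \rhoU^2/8$ requires three ingredients in concert---the sign of the $-r^2/u^3$ part of $\Delta \phi$ (which absorbs the positive $\norm{\nabla \phi}^2$ term when $u \geq 4/\rhoU$), the linear lower bound on $\ps{\nabla U(x)}{x-\xstar}$ from \Cref{assum:convex}, and the geometric bound $r/u \geq 2/3$---all of which are exactly what the choice $\KOne = \max(1, \RUl, 4d/\rhoU)$ is engineered to deliver.
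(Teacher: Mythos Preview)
Your overall strategy---direct computation of $\generatorL \VEa$ and a far-field/near-field split---is exactly the paper's approach, and your near-field analysis (dropping the convexity term $-\ps{\nabla U}{\nabla\phi}\leq 0$, bounding $r^2/u^2\leq \KOne/u$, and using the unimodality of $u\mapsto \rme^{(\rhoU/4)u}/u$ on $[1,(\KOne^2+1)^{1/2}]$) is correct.

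There is, however, a sign error in the far-field step that breaks the argument. You assert that $u\geq 4/\rhoU$ makes
\[
\Bigl(\frac{\rhoU}{4}\Bigr)^{2}\frac{r^2}{u^2}-\frac{\rhoU}{4}\frac{r^2}{u^3}
=\frac{\rhoU r^2}{4u^2}\Bigl(\frac{\rhoU}{4}-\frac{1}{u}\Bigr)
\]
nonpositive, but $u\geq 4/\rhoU$ is precisely the condition $\rhoU/4-1/u\geq 0$, so this combination is \emph{nonnegative} in the far field and cannot be discarded when bounding $\generatorL\VEa/\VEa$ from above. Consequently the inequality $\generatorL\VEa/\VEa\leq -(3\rhoU^2/16)(r/u)$ is not justified as written. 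The paper's own computation does not try to cancel $\norm{\nabla\phi}^2$ against the Laplacian piece; it keeps the full bracket
$(\rhoU/4)\,r^2/u-\ps{\nabla U(x)}{x-\xstar}-r^2/u^2+d$
and bounds it directly using $\ps{\nabla U(x)}{x-\xstar}\geq\rhoU r$, $d\leq(\rhoU/4)r$, and $r^2/u\leq r$. A simple repair of your route is to use $r^2/u^2\leq r/u$ to bound $\norm{\nabla\phi}^2\leq (\rhoU/4)^2 r/u$; combined with your other estimates this yields $\generatorL\VEa/\VEa\leq -(\rhoU^2/8)(r/u)$ outside $\Setdrift$, which with $r/u\geq 1/\sqrt2$ gives a drift rate $\rhoU^2/(8\sqrt2)$ rather than the stated $\rhoU^2/8$.
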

\begin{proof}
The proof is adapted from \cite[Corollary 1.6]{bakry:barthe:cattiaux:guillin:2008} and is postponed to \Cref{proof:prop:drift_exp_conv}.
\end{proof}
\begin{theorem}
\label{theo:quantative-bound-lang-convex}
Assume \Cref{assum:regularity} and \Cref{assum:convex}.   Then  for all $x \in \rset^d$, $ \norm{\delta_x \PL_t - \pi}_{\TV} \leq 2 \Lambda(x) \rme^{-\thetaOne t/4  } + 4\varpi^{t}$,
where 
\begin{subequations}
\begin{align}
\label{eq:kappa-convex}
&\log(\varpi)
= -  \log(2) (\thetaOne/4) \\
\nonumber
& \phantom{aaaaa}\times \parentheseDeux{\log \defEns{\thetaOne^{-1} \bOne \parenthese{3+ 4 \rme^{4^{-1} \thetaOne \Fsmall\parenthese{2^{-1},(8/\rhoU) \log( 4 \thetaOne^{-1} \bOne)}}}} +\log(2) }^{-1}   \eqsp, \\
\label{eq:Cdelta_xQ_gamma_reflection}
&\Lambda(x) =  (1/2)( \VOne(x)+\thetaOne^{-1}\bOne) 
+ 2 \thetaOne^{-1} \bOne\rme^{4^{-1} \thetaOne \Fsmall(2^{-1},(8/\rhoU) \log( 4 \thetaOne^{-1} \bOne))} \eqsp,
\end{align}
\end{subequations}
 the function $\VEa$ is defined in \eqref{eq:def_Wc}, the constants $\thetaOne,
\bOne$ in 
\Cref{propo:drift_exp_conv}.
\end{theorem}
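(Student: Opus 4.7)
The proof will invoke the general quantitative convergence results for diffusions based on reflection coupling announced for the appendix (\Cref{sec:quant-conv-bounds}), specialized to the overdamped Langevin diffusion under \Cref{assum:regularity} and \Cref{assum:convex}. The inputs these general results require are: a Lyapunov function $V$ together with a drift inequality $\geneLang V \leq -\theta V + \beta \1_{\Setdrift}$ whose small set $\Setdrift$ is a ball centered at $\xstar$, and a global Lipschitz constant for the drift $-\nabla U$. These are provided respectively by \Cref{propo:drift_exp_conv} (giving $\VOne = \VEa$, $\thetaOne = \rhoU^2/8$, $\bOne$ as stated, and $\Setdrift = \ball{\xstar}{\KOne}$) and by \Cref{assum:regularity} (giving the constant $\LU$). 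The strategy is thus \emph{plug-and-play}: assemble these quantities and insert them into the abstract reflection-coupling bound.

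The structure of the abstract bound is standard for this type of approach (in the spirit of Hairer--Mattingly--Scheutzow). One couples two Langevin diffusions started at $x$ and at $y\sim \pi$ using the reflection coupling of \cite{lindvall:rogers:1986} whenever both paths lie in $\ball{\xstar}{\KOne}$, and otherwise uses the synchronous coupling. The TV bound is then $\norm{\delta_x \PL_t - \pi}_{\TV} \le 2\PP(\tau_c > t)$ where $\tau_c$ is the coupling time, and the tail of $\tau_c$ decomposes into (i) the return time to the sublevel set, controlled by the Lyapunov drift and contributing the factor $\VOne(x) + \thetaOne^{-1}\bOne$ with exponential rate $e^{-\thetaOne t/4}$, and (ii) the time for the reflection coupling to succeed once both copies are inside, whose failure probability per unit time is measured by $\Fsmall(1/2, R)$, the quantity arising from the hitting time of $0$ by a Brownian motion with bounded drift starting from a distance $R$.

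The concrete value of $R$ in $\Fsmall(1/2, (8/\rhoU)\log(4 \thetaOne^{-1}\bOne))$ reflects the diameter of the effective small set: we need to contain the sublevel set $\{\VOne \le 4\thetaOne^{-1}\bOne\}$ inside a ball, which by the explicit form $\VEa(x) = \exp((\rhoU/4)(\norm{x-\xstar}^2+1)^{1/2})$ inverts to a ball of radius roughly $(8/\rhoU)\log(4\thetaOne^{-1}\bOne)$. Similarly, the factor $2 \thetaOne^{-1}\bOne \exp(4^{-1}\thetaOne \Fsmall(\dots))$ appearing in $\Lambda(x)$ arises from bounding the worst-case Lyapunov overshoot during the coupling attempt (using that $\Funfo(\lambda,\cdot)$-type quantities control $\expe{\VOne(\YL_s)}$ up to time $\Fsmall(\dots)$). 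The rate $\log(\varpi)$ is then fixed by the standard renewal optimisation: one picks the exponent that equalises the decay of the two competing terms (excursions versus coupling inside the small set), which produces the specific combinatorial expression with $\log(2)$ and the bracketed logarithm.

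The main obstacle lies not in the overall architecture, which is by now classical, but in the careful bookkeeping of the universal constants so that the bound depends only on $\LU$, $\rhoU$, $\RUl$ and the dimension $d$ through the explicit constants of \Cref{propo:drift_exp_conv}. In particular, one must ensure that the quantity controlling the reflection coupling inside the ball -- essentially the expected meeting time of a one-dimensional diffusion $r_t = \norm{\YL_t - \YL_t'}$ whose drift is dominated (in the appropriate direction) by $\LU r_t$ and diffusion coefficient is $2\sqrt{2}$ -- is expressed through the function $\Fsmall$ with the arguments displayed, and that the factor $1/4$ in the exponent $e^{-\thetaOne t/4}$ emerges from the standard Meyn--Tweedie trade-off between contraction rate and enlargement of the small set. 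Once the general coupling proposition is in place (as will be established in \Cref{sec:quant-conv-bounds}), the present theorem is obtained by direct substitution.
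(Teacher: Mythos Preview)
Your overall plan---feed the Lyapunov data of \Cref{propo:drift_exp_conv} into the abstract reflection-coupling bound of \Cref{sec:quant-conv-bounds} (specifically \Cref{theo:quantitative_bound_SDE} with $\epsilon=1/2$)---is exactly what the paper does. However, several of the mechanisms you describe are incorrect and would lead you astray if you tried to fill in the details.

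First, the coupling is \emph{pure} reflection, not a reflection/synchronous hybrid: under convexity one has $\ps{b(x)-b(y)}{x-y}\le 0$ everywhere, so by It\^o's formula $\norm{\XSDE_t-\YSDE_t}\le \norm{x-y}+2B_t^1$ globally (\Cref{propo:reflection_coupling_SDE}); the Lipschitz constant $\LU$ never enters the coupling estimate, only well-posedness. Second, you miss the time rescaling: the abstract SDE in \Cref{sec:quant-conv-bounds} has unit diffusion coefficient, whereas \eqref{eq:langevin} carries $\sqrt{2}\,\rmd B_t^d$, so one passes to \eqref{eq:langevin-scale} via $t\mapsto t/2$ and the drift constants become $\thetaOne/2,\bOne/2$; this, not a Meyn--Tweedie trade-off, is the origin of the exponent $\thetaOne t/4$. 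Third, the renewal argument lives on $\rset^{2d}$, not $\rset^d$: the set one returns to is $\Thetadrift=\{(x,y):\VOne(x)+\VOne(y)\le 4\thetaOne^{-1}\bOne\}$ (taking $\delta=2\thetaOne^{-1}\bOne$), and the check $\Thetadrift\subset\diagSet_{\RdriftSS}$---done via $\rme^{(a_1+a_2)/2}\le(\rme^{a_1}+\rme^{a_2})/2$ and the explicit form of $\VEa$---is what yields $\RdriftSS=(8/\rhoU)\log(4\thetaOne^{-1}\bOne)$. Finally, the passage from the two-point bound $\tvnorm{\PL_t(x,\cdot)-\PL_t(y,\cdot)}$ to $\tvnorm{\delta_x\PL_t-\pi}$ is by integrating over $y\sim\pi$ and using $\int\VOne\,\rmd\pi\le\thetaOne^{-1}\bOne$ from \cite[Theorem 4.3-(ii)]{meyn:tweedie:1993:III}, which is where the $\thetaOne^{-1}\bOne$ in $\Lambda(x)$ comes from.
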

\begin{proof}
The proof is postponed to \Cref{sec:proof-crefth-bound-quant-conv}.
\end{proof}
Note that the bound, we obtain is a little different from
\eqref{eq:ergodicity-langevin-expo}. The initial condition is isolated
on purpose to get a better bound. A consequence of this result is the
following bound on the convergence of the sequence $(\delta_x
\QKer_{\gaStep}^n)_{n \geq 0}$ to $\pi$.
\begin{corollary}
\label{coro:quantative-bound-lang-convex}
Assume \Cref{assum:regularity} and \Cref{assum:convex}. Let $(\gaStep_k)_{k \geq 0}$ be a sequence of nonnegative
  step sizes. Then for all $x \in \rset^d$, $n \geq 0$, $p \geq 1$, $n < p$,
\begin{multline*}
  \tvnorm{\delta_x \QKer_{\gaStep}^p -\pi} \leq
2^{-1/2} \LU \parenthese{ \sum_{k=n}^{p-1} \defEns{(\gaStep_{k+1}^3/3) A(\gaStep,x) + d \gaStep_{k+1}^2}}^{1/2}
\\ +2 \Lambda(\delta_x \QKer_{\gaStep}^n) \rme^{-\thetaOne \Gamma_{n+1,p}/4  } + 4\varpi^{\Gamma_{n+1,p}} \eqsp,
\end{multline*}
where $A(\gaStep,x)$, $\varpi$ are given by \eqref{eq:borne_A_conv} and  \eqref{eq:kappa-convex} respectively and 
\begin{multline}
\label{eq:Cdelta_xQ_gamma_reflection_coro}
\Lambda(\delta_x \QKer_{\gaStep}^n) =  (1/2)( \Funfo(\thetaEa,\Gammasum_{n},\gaStep_1,\bEa,\VEa(x)) +\thetaOne^{-1}\bOne)\\ 
+ 2 \thetaOne^{-1} \bOne\rme^{4^{-1} \thetaOne \Fsmall(2^{-1},(8/\rhoU) \log( 4 \thetaOne^{-1} \bOne))} \eqsp,
\end{multline}
 the functions $\Funfo$ and $\VEa$ are defined in
\eqref{eq:def_Funfo} and \eqref{eq:def_Wc}, the constants $ \thetaEa, \bEa,\thetaOne,
\bOne$ in \Cref{propo:drift-Euler} and
\Cref{propo:drift_exp_conv} respectively.
\end{corollary}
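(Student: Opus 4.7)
The plan is to combine the Girsanov-type bound of \Cref{propo:girsanov_comparison} with the reflection-coupling bound of \Cref{theo:quantative-bound-lang-convex}, using \Cref{propo:drift-Euler} to control the moments of the time-inhomogeneous chain at time $n$.

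First, I would start from the triangle inequality \eqref{eq:triangle_ineq_TV}: for $0\leq n<p$,
\[
\tvnorm{\delta_x \QKer_{\gaStep}^p -\pi} \leq \tvnorm{\delta_x \QKer_{\gaStep}^n \QKer_{\gaStep}^{n+1,p} - \delta_x \QKer^n_\gaStep \PL_{\GaStep_{n+1,p}}} + \tvnorm{\delta_x \QKer^n_\gaStep \PL_{\GaStep_{n+1,p}} - \pi}.
\]
For the first term I would reuse verbatim the Girsanov/Pinsker computation carried out in the proof of \Cref{propo:girsanov_comparison} (inequality \eqref{eq:pinsker_girsanov_2}), which yields the square-root Euler-discretization error in \eqref{eq:eq_base}; to convert the supremum-type quantity $A(\gaStep,x)$ into an explicit expression I would invoke \Cref{coro:convex_A_gamma}, giving the first term on the right-hand side of the claimed bound.

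For the second term, the strategy is to apply \Cref{theo:quantative-bound-lang-convex} not to a Dirac mass but to the probability measure $\mu_n \defeq \delta_x \QKer_\gaStep^n$. Since $\tvnorm{\mu_n \PL_t - \pi} \leq \int \tvnorm{\delta_y \PL_t - \pi} \mu_n(\rmd y)$ and the bound in \Cref{theo:quantative-bound-lang-convex} is affine in $\Lambda(y)$, one immediately gets
\[
\tvnorm{\mu_n \PL_{\GaStep_{n+1,p}} - \pi} \leq 2\mu_n(\Lambda)\, \rme^{-\thetaOne \GaStep_{n+1,p}/4} + 4\varpi^{\GaStep_{n+1,p}},
\]
with $\Lambda$ as in \eqref{eq:Cdelta_xQ_gamma_reflection}. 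It remains to bound $\mu_n(\Lambda)$. Because $\Lambda$ is an affine function of $\VEa$ plus a constant, it suffices to estimate $\mu_n(\VEa)=\delta_x\QKer_\gaStep^n \VEa$. This is provided by \Cref{lem:bound_inho_moment} applied to the Foster--Lyapunov drift for $\RKer_\gaStep$ established in \Cref{propo:drift-Euler}: one obtains $\delta_x\QKer_\gaStep^n \VEa \leq \Funfo(\thetaEa,\Gammasum_n,\gaStep_1,\bEa,\VEa(x))$, and substituting in the definition of $\Lambda$ produces exactly \eqref{eq:Cdelta_xQ_gamma_reflection_coro}.

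No step is really hard: the only mild subtlety is the affine-in-initial-condition extension of \Cref{theo:quantative-bound-lang-convex} from Diracs to arbitrary initial measures, which is immediate from the linearity of $\mu\mapsto \mu\PL_t$ and the fact that $y\mapsto \tvnorm{\delta_y \PL_t - \pi}$ is bounded by an affine functional of $\VEa(y)$. Everything else is a straightforward concatenation of the three ingredients (Girsanov, reflection-coupling convergence, drift moment bound) along the split at the intermediate index $n$.
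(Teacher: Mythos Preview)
Your proposal is correct and matches the paper's own proof essentially verbatim: the paper bounds $\tvnorm{\delta_x Q^n_\gamma \PL_{\Gamma_{n+1,p}} - \pi}$ by integrating the pointwise bound of \Cref{theo:quantative-bound-lang-convex} against $\delta_x Q^n_\gamma$, controls the resulting $\delta_x Q^n_\gamma \VEa$ via \Cref{propo:drift-Euler} and \Cref{lem:bound_inho_moment}, and then says the remainder ``follows the same line as the one of \Cref{propo:girsanov_comparison}.'' Your identification of the only mild subtlety (the affine-in-$\VEa$ extension from Diracs to general initial measures) is exactly the content the paper leaves implicit.
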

\begin{proof}
By \Cref{theo:quantative-bound-lang-convex}, \Cref{propo:drift-Euler} and \Cref{lem:bound_inho_moment}, we have for all $x \in \rset^d$,
\begin{equation*}
\tvnorm{\delta_x Q^n_\gamma \PLang_{\Gamma_{n+1,p}} - \pi} \leq  2 \Lambda(\delta_x \QKer_{\gaStep}^n) \rme^{-\thetaOne \Gamma_{n+1,p}/4  } + 4\varpi^{\Gamma_{n+1,p}}\eqsp.
\end{equation*}
Finally the proof follows the same line as the one of \Cref{propo:girsanov_comparison}.
\end{proof}
 Contrary to
\eqref{eq:Cdelta_xQ_gamma_bobkov},
\eqref{eq:Cdelta_xQ_gamma_reflection_coro} is uniformly bounded in $n$. By
\Cref{coro:quantative-bound-lang-convex} and \eqref{eq:borne_A_conv},
we can apply
\Cref{propo:limit-zeros-tv-1}-\ref{propo:limit-zeros-tv-1a}, which
implies the convergence to $0$ of $\tvnorm{\delta_x
  \QKer_{\gaStep}^p-\pi}$ as $p$ goes to infinity, if $\lim_{k \to
  \plusinfty} \gaStep_k = 0$ and $\lim_{k \to \plusinfty} \GaStep_k =
\plusinfty$.  Since $\log(\bOne)$ in \Cref{propo:drift_exp_conv} is of
order $d$, we get that the rate of convergence $\log(\varpi)$ is of
order $d^{-2}$ as $d$ goes to infinity (note indeed that the leading
term when $d$ is large is $ \thetaOne
\Fsmall\parenthese{2^{-1},(8/\rhoU) \log( 4 \thetaOne^{-1} \bOne)}$
which is of order $d^2$).  
In the case of constant step sizes
$\gaStep_k = \gaStep$ for all $k \geq 0$, we adapt 
\Cref{propo:precision}  to the bound given by \Cref{coro:quantative-bound-lang-convex}.

\begin{corollary}
  \label{coro:precision-convex}
Assume \Cref{assum:regularity} and \Cref{assum:convex}. Let $(\gaStep_k)_{k \geq 0}$ be a sequence of nonnegative
  step sizes. Then for all $\varepsilon >0$, we get $\tvnorm{\delta_x  \RKer_{\gaStep}^p-\pi} \leq \varepsilon$ if
$p$ and $\gaStep$ satisfy \eqref{eq:precision}
with
\begin{align*}
  T &= \max\defEns{4 \thetaOne^{-1}\log\parenthese{8 \varepsilon^{-1}\tilde{\Lambda}(x)}, \log(16\varepsilon^{-1}) \Big/(- \log(\varpi))} \\
  \tilde{\Lambda}(x) &=  (1/2)( \FunfoD(\thetaEa,\gaStep_1,\bEa,\VEa(x)) +\thetaOne^{-1}\bOne)+ 2 \thetaOne^{-1} \bOne\rme^{4^{-1} \thetaOne \Fsmall(2^{-1},(8/\rhoU) \log( 4 \thetaOne^{-1} \bOne))} \eqsp,
\end{align*}
where $A(\gaStep,x)$, $\varpi$ are given by \eqref{eq:borne_A_conv},  \eqref{eq:kappa-convex} respectively,
 the functions $\FunfoD$ and $\VEa$ are defined in
\eqref{eq:def_FunfoD} and \eqref{eq:def_Wc}, the constants $ \thetaEa, \bEa,\thetaOne,
\bOne$ in \Cref{propo:drift-Euler} and
\Cref{propo:drift_exp_conv} respectively.
\end{corollary}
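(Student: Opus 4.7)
The plan is to adapt the argument of Lemma~\ref{propo:precision} to the three-term decomposition given by Corollary~\ref{coro:quantative-bound-lang-convex} (discretization error via Girsanov, drift-based exponential term, and the reflection-coupling term). The structure of the proof of Lemma~\ref{propo:precision} is preserved: pick $n = p - \floor{T \gaStep^{-1}}$ so that $\GaStep_{n+1,p} = (p-n)\gaStep \geq T$, split the $\varepsilon$ budget between the discretization error and the two ergodicity terms, and choose $\gaStep$ small enough to control the Girsanov part.

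First, I would turn the state-dependent prefactor $\Lambda(\delta_x \QKer_{\gaStep}^n)$ appearing in \eqref{eq:Cdelta_xQ_gamma_reflection_coro} into the $n$-free bound $\tilde{\Lambda}(x)$. This is immediate from Lemma~\ref{lem:bound_inho_moment} applied to the drift inequality of Proposition~\ref{propo:drift-Euler}, since $\Funfo(\thetaEa,\GaStep_{n},c,\gaStep,w)\leq \FunfoD(\thetaEa,c,\gaStep,w)$ uniformly in $n$, so that only the $\VEa(x)$-dependent piece of $\Lambda$ needs to be upper-bounded, while the remaining pieces are already state-independent.

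Second, for $p > T\gaStep^{-1}$ and $n = p - \floor{T \gaStep^{-1}}$, one has $\GaStep_{n+1,p} \geq T$. The definition of $T$ as a maximum of two quantities is precisely what is needed: the first branch $T \geq 4 \thetaOne^{-1}\log(8 \varepsilon^{-1}\tilde{\Lambda}(x))$ guarantees $2\tilde\Lambda(x)\exp(-\thetaOne \GaStep_{n+1,p}/4)\leq \varepsilon/4$, and the second branch $T \geq \log(16\varepsilon^{-1})/(-\log\varpi)$ guarantees $4\varpi^{\GaStep_{n+1,p}}\leq \varepsilon/4$. Together these account for $\varepsilon/2$.

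Finally, for the Girsanov term, since $\gaStep_k=\gaStep$ and $p-n\leq T\gaStep^{-1}$, one has $\sum_{k=n}^{p-1}\{(\gaStep^3/3)A(\gaStep,x) + d \gaStep^2\} \leq T\{(\gaStep^2/3)A(\gaStep,x) + d\gaStep\}$, with $A(\gaStep,x)$ controlled by \eqref{eq:borne_A_conv}. Requiring this term to be at most $\varepsilon/2$ leads, after squaring and solving the resulting quadratic in $\gaStep$, to exactly the upper bound on $\gaStep$ given by \eqref{eq:precision}; the constraint $\gaStep\leq \bargaStep$ is imposed in order to use Proposition~\ref{propo:drift-Euler}. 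Summing the three contributions yields $\tvnorm{\delta_x \RKer_{\gaStep}^p - \pi}\leq \varepsilon/2 + \varepsilon/4 + \varepsilon/4=\varepsilon$.

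There is no genuine analytic obstacle here: the work is essentially bookkeeping. The only slightly delicate point is ensuring that $\tilde\Lambda(x)$ correctly dominates $\Lambda(\delta_x \QKer_\gaStep^n)$ uniformly in $n$, which is why Lemma~\ref{lem:bound_inho_moment} and the inequality $\Funfo\leq \FunfoD$ must be invoked before the $\varepsilon$-budget split. Once that is done, the argument is a direct transcription of the proof of Lemma~\ref{propo:precision}, with the single ergodic term replaced by two and the budget split accordingly.
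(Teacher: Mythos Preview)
Your proposal is correct and follows essentially the same approach as the paper: the paper's proof simply states that it follows the line of Lemma~\ref{propo:precision} using Corollary~\ref{coro:quantative-bound-lang-convex} together with the observation that $\sup_{n\geq 0}\Lambda(\delta_x \QKer_{\gaStep}^n)\leq \tilde\Lambda(x)$. Your write-up supplies the details (the $\Funfo\leq\FunfoD$ bound and the $\varepsilon/2+\varepsilon/4+\varepsilon/4$ budget split matching the definition of $T$) that the paper leaves implicit.
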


\begin{proof}
  The proof follows the same line as the one of \Cref{propo:precision}
  using \Cref{coro:quantative-bound-lang-convex} and that $\sup_{n \geq 0} \Lambda(\delta \QKer_{\gaStep}^n) <
  \tilde{\Lambda}(x)$ for all $x \in \rset^d$.
\end{proof}



\begin{table}
\centering
\fbox{
\begin{tabular}{c|c|c|c}
& $d$ & $\varepsilon$ & $\LUl$   \\
    \hline
$\gaStep$ & $\bigO(d^{-3})$ & $\bigO(\varepsilon^2/\log(\varepsilon^{-1}))$ & $\bigO(\LUl^{-2})$\\
    \hline
$p$ & $\bigO(d^5)$ & $\bigO(\varepsilon^{-2}\log^{2}(\varepsilon^{-1}))$ & $\bigO(\LUl^2)$\\
\end{tabular}
}
\vspace{0.3cm}
\\
\caption{\label{tab:dep-conv-const-stepsize} For constant step sizes, dependency of $\gaStep$ and $p$ in $d$, $\varepsilon$ and parameters of $\VUl$ to get $\tvnorm{\delta_x \RKer_{\gaStep}^p - \pi} \leq \varepsilon$ using \Cref{coro:precision-convex}}
\end{table}


In particular, with the notation of \Cref{coro:precision-convex},
since $\max(\log(\bOne),\log(\bEa))$ and $-(\log(\varpi))^{-1}$ are of
order $d$ and $d^2$ as $d$ goes to infinity respectively, $T$ is of
order $d^2$. Therefore, $\gaStep$ defined by
\eqref{eq:precision} is of order $d^{-3}$ which implies a
number of iteration $p$ of order $d^{5}$ to get $\tvnorm{\delta_x
  Q^p_{\gaStep}-\pi} \leq \varepsilon$ for $\varepsilon >0$; see also
\Cref{tab:dep-conv-const-stepsize}.

\Cref{coro:precision-convex} can be compared with the
results which establishes the dependency on the dimension for two
kinds of Metropolis-Hastings algorithms to sample from a log-concave
density: the random walk Metropolis algorithm (RWM) and the
hit-and-run algorithm.  It has been shown in \cite[Theorem
2.1]{lovasz:vempala:2007} that for $\varepsilon >0$, the hit-and-run
and the RWM reach a ball centered at $\pi$, of radius $\varepsilon $
for the total variation distance, in a number of iteration $p$ of
order $d^{4}$ as $d$ goes to infinity. It should be stressed that
\cite[Theorem 2.1]{lovasz:vempala:2007} does not assume any kind of
smoothness about the density $\pi$ contrary to
\Cref{theo:quantative-bound-lang-convex}.  However, this result
assumes that the target distribution is near-isotropic, \ie~there
exists $C \geq 0$ which does not depend on the dimension such that for
all $x \in \rset^d$,
\begin{equation*}
  C^{-1}\norm[2]{x}\leq \int_{\rset^d}\ps{x}{y}^2 \pi(\rmd y) \leq C \norm[2]{x} \eqsp.
\end{equation*}
Note that this condition implies that the variance of $\pi$ is upper
bounded by $C d$.  

To conclude our study on convex potential, we also mention
\cite{bubley:dyer:jerrum:1998} which studies the sampling of the
uniform distribution over a convex subset $\convexSet \subset \rset^d$
using coupling techniques.  Let $C > 0$. A convex set $\convexSet
\subset \rset^d$ is $C$-well rounded if $\ball{0}{1} \subset
\convexSet \subset \ball{0}{Cd}$. \cite{bubley:dyer:jerrum:1998} shows
that a number of iteration of order $d^{9}$ as $d$ goes to infinity is
sufficient to sample uniformly over any $C$-well rounded convex
set. Comparison with our result is difficult since we assume that
$\pi$ is positive on $\rset^d$, continuously differentiable, while
\cite{bubley:dyer:jerrum:1998} studies the case of uniform
distributions over a convex body. An adaptation of our result to non
continuously differentiable potentials will appear in a forthcoming
paper \cite{durmus:moulines:pereyra:2016}.

\subsection{Strongly log-concave densities}
\label{subsec:strongly-log-concave}
More precise bounds can be obtained in the case where $\VUl$ is assumed to be strongly convex outside some ball; this assumption has been considered by \cite{eberle:2015} for convergence in the Wasserstein distance; see also \cite{bolley:gentil:guillin:2012}.
\begin{assumption}[$\RSt$]
  \label{assum:strongConvexityOutsideBallDriftV}
  $U$ is convex and there exist $\RSt \geq 0$ and $\constStV >0$, such that for all
  $x,y \in \rset^d$ satisfying $\norm{x-y} \geq \RStV$,
  \[
  \ps{\nabla \VUl(x) -\nabla \VUl(y)} {x-y} \geq  \constStV
  \norm[2]{x-y} \eqsp.
  \]
\end{assumption}
We will see in the sequel that under this assumption the convergence rate in \eqref{eq:ergodicity-langevin-expo} does not depend on the dimension $d$ but only on the constants $\constStV$ and $\RStV$.

\begin{proposition}
  \label{propo:drift_R_gamma_Stconv}
 Assume \Cref{assum:regularity} and \Cref{assum:strongConvexityOutsideBallDriftV}($\RSt$). Let $\bargaStep \in  \ooint{0,2\constSt \LUl^{-2}}$. For all $\gaStep \in \ocint{0,\bargaStep}$, $V(x)= \norm[2]{x-\xstar}$ satisfies  \eqref{eq:foster-lyapunov-drift}
with  $\thetaESt= \rme^{-2\constSt+\bargaStep \LUl^2}$ and  $\bESt =  2(d + \constStV  \RSt^2)$.
\end{proposition}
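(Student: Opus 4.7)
The plan is to compute $\RKer_\gaStep V(x)$ explicitly from the definition of the Euler step, then control the cross-term $\langle \nabla U(x), x - \xstar\rangle$ using a global inequality derived from \Cref{assum:convex} and \Cref{assum:strongConvexityOutsideBallDriftV}.

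First I would write, using the Gaussian integration $\PE[\|Z\|^2] = d$ and the vanishing of the linear term in $Z$,
\begin{equation*}
\RKer_\gaStep V(x) = \norm[2]{x - \xstar - \gaStep \nabla U(x)} + 2 \gaStep d
= V(x) - 2 \gaStep \ps{\nabla U(x)}{x - \xstar} + \gaStep^2 \norm[2]{\nabla U(x)} + 2\gaStep d.
\end{equation*}
Since $\xstar$ is a minimizer of the convex function $U$, the assumption~\Cref{assum:regularity} yields $\nabla U(\xstar) = 0$, so $\norm{\nabla U(x)} \leq \LU \norm{x - \xstar}$ and therefore $\gaStep^2 \norm[2]{\nabla U(x)} \leq \gaStep \bargaStep \LU^2 V(x)$.

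The key step is the following global lower bound on the cross term, valid for every $x \in \rset^d$:
\begin{equation*}
\ps{\nabla U(x)}{x - \xstar} \geq \constSt V(x) - \constSt \RSt^2.
\end{equation*}
Indeed, when $\norm{x - \xstar} \geq \RSt$, this follows directly from \Cref{assum:strongConvexityOutsideBallDriftV}($\RSt$) applied with $y = \xstar$ and $\nabla U(\xstar) = 0$; when $\norm{x - \xstar} < \RSt$, the right-hand side is negative while convexity of $U$ (with $\nabla U(\xstar) = 0$) gives $\ps{\nabla U(x)}{x-\xstar} \geq 0$.

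Plugging this inequality together with the Lipschitz bound into the expression for $\RKer_\gaStep V(x)$ gives
\begin{equation*}
\RKer_\gaStep V(x) \leq V(x)\parenthese{1 - 2 \gaStep \constSt + \gaStep \bargaStep \LU^2} + 2\gaStep \constSt \RSt^2 + 2 \gaStep d.
\end{equation*}
I would then conclude with the elementary inequality $1 + u \leq \rme^{u}$ applied to $u = \gaStep(-2\constSt + \bargaStep \LU^2)$, which yields $1 - 2\gaStep \constSt + \gaStep \bargaStep \LU^2 \leq \thetaESt^\gaStep$ with $\thetaESt = \rme^{-2\constSt + \bargaStep \LU^2}$, and identify $\bESt = 2(d + \constSt \RSt^2)$. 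The condition $\bargaStep \in \ooint{0, 2 \constSt \LU^{-2}}$ guarantees $\thetaESt \in \ooint{0,1}$. No serious obstacle is expected; the only conceptual point is to notice that convexity (not strong convexity) already suffices inside the ball $\ball{\xstar}{\RSt}$ to recover the global quadratic lower bound on the drift.
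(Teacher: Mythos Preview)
Your proof is correct and follows essentially the same approach as the paper's: compute $\RKer_\gaStep V(x)$ explicitly, bound the quadratic term $\gaStep^2\norm[2]{\nabla U(x)}$ via the Lipschitz constant, control the cross-term using strong convexity outside $\ball{\xstar}{\RSt}$ and convexity inside, and conclude with $1+u \leq \rme^u$. Your packaging of the two regimes into the single global inequality $\ps{\nabla U(x)}{x-\xstar} \geq \constSt V(x) - \constSt \RSt^2$ is slightly cleaner than the paper's explicit case split, but the substance is identical.
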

\begin{proof}
The proof is postponed to \Cref{sec:proof_drift_strong_conv}.
\end{proof}
\begin{theorem}
\label{theo:convergence_TV_dec-stepsize-StV}
Assume \Cref{assum:regularity}  and \Cref{assum:strongConvexityOutsideBallDriftV}($\RSt$).  Let $\sequencen{\gaStep_k}[k \geq 1]$ be a nonincreasing sequence with $\gaStep_1 \leq \bargaStep$,  $\bargaStep \in \ooint{0,2\constSt \LUl^{-2}}$.
Then, for all $n\geq 0$, $p \geq 1$, $n<p$, and $x \in \rset^d$, \eqref{eq:eq_base} holds with
\begin{align}
\nonumber
\log(\tauConvSt ) &= -(\constSt/2) \log(2) \\
\nonumber
&\quad \times \parentheseDeux{\log\defEns{\parenthese{1+\rme^{\constSt \Fsmall(2^{-1},\max(1,\RSt))/4}}(1+\max(1,\RSt))} +\log(2) }^{-1} \\
\nonumber
  C(\delta_x \QKer_{\gaStep}^n) &\leq 6+ 2\parenthese{d/\constStV + \RStV^2}^{1/2} + 2 \Funfo^{1/2}(\thetaESt,\Gamma_{1,n},\bESt,\gaStep_1,\norm[2]{x-\xstar}) \\
\nonumber
A(\gaStep,x) & \leq \LUl^{2}\, \FunfoD(\thetaESt,\bESt,\gaStep_1, \norm[2]{x-\xstar}) \eqsp,
\end{align}
where $\Funfo,\FunfoD,\Fsmall$ are defined by \eqref{eq:def_Funfo}, \eqref{eq:def_FunfoD}, \eqref{eq:Fsmall} respectively, and $\thetaESt,\bESt$ are given in \Cref{propo:drift_R_gamma_Stconv}.
\end{theorem}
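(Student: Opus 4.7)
The plan is to apply \Cref{propo:girsanov_comparison} and to estimate each of the three ingredients $A(\gaStep,x)$, $\log(\tauConvSt)$, and $C(\delta_x \QKer_{\gaStep}^n)$ using the drift inequality of \Cref{propo:drift_R_gamma_Stconv} together with a reflection coupling argument for $(\PL_t)_{t\ge 0}$. Note that under \Cref{assum:regularity} and \Cref{assum:convex} (which is implicit in \Cref{assum:strongConvexityOutsideBallDriftV}($\RSt$)), $\xstar$ is a minimizer of $U$ so $\nabla U(\xstar)=0$; hence \Cref{assum:regularity} yields $\norm{\nabla U(z)}^2 \le \LUl^2 \norm[2]{z-\xstar} = \LUl^2 V(z)$ with $V(x)=\norm[2]{x-\xstar}$.

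First I would bound $A(\gaStep,x)$. Using the inequality above and the definition \eqref{eq:eq_base1} gives
\[
A(\gaStep,x) \le \LUl^2 \sup_{k\ge 0} \QKer_{\gaStep}^k V(x).
\]
Since $V$ satisfies the Foster--Lyapunov inequality \eqref{eq:foster-lyapunov-drift} of \Cref{propo:drift_R_gamma_Stconv} with constants $\thetaESt, \bESt$, \Cref{lem:bound_inho_moment} together with the definition \eqref{eq:def_FunfoD} of $\FunfoD$ yields $\sup_{k\ge 0}\QKer_{\gaStep}^k V(x)\le \FunfoD(\thetaESt,\bESt,\gaStep_1,\norm[2]{x-\xstar})$, which is the claimed bound on $A(\gaStep,x)$.

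Second, I would establish a continuous-time convergence bound for the semigroup $(\PL_t)_{t\ge 0}$ in total variation of the form $\tvnorm{\delta_y \PL_t -\pi} \le \bar{C}(y)\,\tauConvSt^t$, with the announced $\log(\tauConvSt)$. For this I would invoke the reflection-coupling results announced in \Cref{sec:quant-conv-bounds}: under \Cref{assum:strongConvexityOutsideBallDriftV}($\RSt$) the drift $-\nabla U$ is $\mStD$-strongly one-sided monotone outside the ball $\ball{\xstar}{\RSt}$, so the synchronous coupling contracts at rate $\mStD$ there while reflection coupling provides a uniform minorization over the ball of radius $\max(1,\RSt)$ with mass proportional to $\bfPhi(-\max(1,\RSt)/(2\sqrt{t}))$, giving rise to the function $\Fsmall$ of \eqref{eq:Fsmall}. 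Combining the two along the lines of \Cref{theo:quantative-bound-lang-convex} produces the geometric factor $\tauConvSt$ with the claimed expression, and a prefactor of the form $3+\norm{y-\xstar}+\int \norm{z-\xstar}\pi(\rmd z)$. Crucially the rate is dimension free because neither $\mStD$ nor $\RSt$ depends on $d$.

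Third, I would control $C(\delta_x \QKer_{\gaStep}^n)$ by applying the previous bound to the distribution $\delta_x \QKer_{\gaStep}^n$, giving
\[
C(\delta_x \QKer_{\gaStep}^n) \le 3 + \int \norm{y-\xstar} \,\delta_x\QKer_{\gaStep}^n(\rmd y) + \int \norm{z-\xstar}\pi(\rmd z).
\]
The first integral is bounded via Jensen's inequality by $\bigl\{\QKer_{\gaStep}^n V(x)\bigr\}^{1/2}\le \Funfo^{1/2}(\thetaESt,\Gamma_{1,n},\bESt,\gaStep_1,\norm[2]{x-\xstar})$ using \Cref{lem:bound_inho_moment} and the definition \eqref{eq:def_Funfo} of $\Funfo$. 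For the second integral, I would show $\pi(V) \le d/\mStD + \RSt^2$: applying the infinitesimal generator to $V$, integrating against $\pi$ (which is stationary) and using strong convexity outside the ball together with Cauchy--Schwarz and gradient-Lipschitz to handle the contribution of $\ball{\xstar}{\RSt}$ gives the desired estimate, so $\int \norm{z-\xstar}\pi(\rmd z)\le (d/\mStD+\RSt^2)^{1/2}$. Putting the three contributions together and crudely bounding $3\le 6$ yields the stated expression for $C(\delta_x \QKer_{\gaStep}^n)$, and plugging everything into \eqref{eq:eq_base} of \Cref{propo:girsanov_comparison} concludes the proof.

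The main obstacle is the reflection-coupling construction for the semigroup: one must combine contraction outside a ball with coupling-by-reflection inside it so that the resulting coupling time has a controlled tail, and turn this into the quantitative total-variation bound with the specific expression for $\log(\tauConvSt)$ given in the statement. This is essentially the quantitative convergence result of \Cref{sec:quant-conv-bounds} and is the only non-elementary ingredient; once it is available, the remaining computations are direct applications of \Cref{propo:girsanov_comparison}, \Cref{propo:drift_R_gamma_Stconv} and \Cref{lem:bound_inho_moment}.
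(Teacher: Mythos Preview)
Your approach matches the paper's proof: the rate $\tauConvSt$ comes from applying the reflection-coupling bound \Cref{theo:quantitative_bound_SDE_2} (not \Cref{theo:quantative-bound-lang-convex}) with $\epsilon=1/2$ to the time-rescaled Langevin diffusion, the moment bound $\int\norm{z-\xstar}\pi(\rmd z)\le (d/\constSt+\RSt^2)^{1/2}$ follows from a generator drift computation on $V(x)=\norm[2]{x-\xstar}$ exactly as you sketch, and both $A(\gaStep,x)$ and $C(\delta_x \QKer_{\gaStep}^n)$ are handled via \Cref{propo:drift_R_gamma_Stconv} and \Cref{lem:bound_inho_moment}. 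Two harmless inaccuracies: the constant $6$ is $2\times 3$ coming from the overall factor $2$ in the coupling bound (not ``$3\le 6$''), and \Cref{assum:strongConvexityOutsideBallDriftV} concerns pairs with $\norm{x-y}\ge \RSt$ (the diagonal strip $\diagSet_{\RSt}$ in the coupling), not points outside a ball $\ball{\xstar}{\RSt}$.
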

\begin{proof}
The proof is postponed to \Cref{sec:proof-crefth-steps-1}.
\end{proof}

Note that the conditions of
\Cref{propo:limit-zeros-tv-1}-\ref{propo:limit-zeros-tv-1a} are
fulfilled. For constant step sizes $\gaStep_k = \gaStep$ for all $k
\geq 1$, \Cref{propo:precision} and \Cref{propo:fixed_budget} can be
applied.  We give in \Cref{tab:dep-conv-const-stepsize-St} the
dependency of the step size $\gaStep >0$ and the minimum number of
iterations $p \geq 0$, provided in \Cref{propo:precision}, on the
dimension $d$ and the other constants related to $U$, to get
$\tvnorm{\delta_x \QKer_{\gaStep}^p - \pi} \leq \varepsilon$, for a
target precision $\varepsilon >0$. We can see that the dependency on
the dimension is milder than for the convex case. The number of
iteration requires to reach a target precision $\varepsilon$ is just
of order $\bigO(d \log(d))$.

Consider the case where $\pi$ is the $d$-dimensional standard Gaussian
distribution. Then for all $p \in \nset$, $\gaStep \in \ooint{0,1}$ and
$x \in \rset^d$, $\delta_x \RKer_{\gaStep}^p$ is the $d$-dimensional
Gaussian distribution with mean $(1-\gaStep)^{p} x$ and covariance
matrix $\sigma_{\gaStep}\IdM$, with $\sigmagaStep=(1-(1-\gaStep)^{2(p+1)})(1-\gaStep/2)^{-1}$. Therefore using the Pinsker
inequality, we get:
\begin{align*}
  \tvnorm{\delta_x \RKer_{\gaStep}^p-\pi}^2 &\leq 2 \KL \parenthese{\left. \delta_x \RKer_{\gaStep}^p \right| \pi }  \\
&\leq d \parentheseDeux{\log\parenthese{\sigmagaStep} -1 +  \sigmagaStep^{-1}\defEns{1+(1-\gaStep)^{2p}\norm[2]{x} d^{-1}} } \eqsp.
\end{align*}
Using the inequalities for all $t \in \ooint{0,1}$, $ (1-t)^{-1} \leq 1+t(1-t)^{-2}$ and for all $s \in \ooint{0,1/2}$, $-\log(1-s) \leq s+2s^2$, we have:
\begin{multline*}
  \tvnorm{\delta_x \RKer_{\gaStep}^p-\pi}^2 \leq d \defEns{\gaStep^2/2 + (1-\gaStep)^{2(p+1)}(1-\gaStep/2)(1-(1-\gaStep)^{2(p+1)})^{-2}} \\
+ \sigmagaStep^{-1}(1-\gaStep)^{2p}\norm[2]{x} \eqsp.
\end{multline*}
This inequality implies that in order to have $\tvnorm{\delta_x \RKer_{\gaStep} -\pi} \leq \varepsilon$ for  $\varepsilon >0$, the step size $\gaStep$ has to be of order $d^{-1/2}$ and $p$ of order $d^{1/2} \log(d)$. Therefore, the dependency on the dimension reported in \Cref{tab:dep-conv-const-stepsize-St} does not match this particular example. However it does not imply that this dependency can be improved.


\begin{table}
\centering
\fbox{
\begin{tabular}{c|c|c|c|c|c}
& $d$ & $\varepsilon$ & $\LF$ & $\constSt$& $\RSt$  \\
    \hline
$\gaStep$ & $\bigO(d^{-1})$ & $\bigO(\varepsilon^2/\log(\varepsilon^{-1}))$ & $\bigO(\LF^{-2})$& $\bigO(\constSt)$ & $\bigO(\RSt^{-4})$  \\
    \hline
$p$ & $\bigO(d \log(d))$ & $\bigO(\varepsilon^{-2}\log^{2}(\varepsilon^{-1}))$ & $\bigO(\LF^2)$&$\bigO(\constSt^{-2})$ & $\bigO(\RSt^{8})$\\
\end{tabular}
}
\vspace{0.3cm}
\\
  \caption{ \label{tab:dep-conv-const-stepsize-St} For constant step sizes,
  dependency of $\gaStep$ and $p$ in $d$,
  $\varepsilon$ and parameters of $\VUl$
  to get $\tvnorm{\delta_x \QKer_{\gaStep}^p - \pi} \leq \varepsilon$ using \Cref{theo:convergence_TV_dec-stepsize-StV}}
\end{table}

\subsection{Bounded perturbation of strongly log-concave densities}
\label{sec:bound-pert-strongly}
We now consider the case where $U$ is a bounded perturbation of a strongly convex potential.
\begin{assumption}
  \label{assum:pertSt}
  The potential $U$ may be expressed as $U = \Uun +\Ude$, where
  \begin{enumerate}[label = (\alph*)]
  \item
\label{assum:pertSt-a}
 $\Uun : \rset^d \to \rset$ satisfies  \Cref{assum:strongConvexityOutsideBallDriftV}($0$) (\ie\ is strongly convex) and there exists $\Lun \geq 0$ such that for all $x,y \in \rset^d$,
 \begin{equation*}
   \norm{\nabla \Uun(x) - \nabla \Uun(y)} \leq \Lun \norm{x-y} \eqsp.
 \end{equation*}
\item
\label{assum:pertSt-b}
$\Ude : \rset^d \to \rset$ is continuously differentiable and $ \normsup{\Ude} + \normsup{\nabla \Ude} < \plusinfty$  .
\end{enumerate}
\end{assumption}
The probability measure $\pi$ is said to  satisfy a log-Sobolev inequality with constant $\Clogsob>0$ if ~for all locally Lipschitz function $h : \rset^d \to \rset$, we have
\begin{equation*}
  \Ent{\pi}{h^2} \leq 2 \Clogsob \int \norm[2]{\nabla h} \rmd \pi \eqsp.
\end{equation*}
Then  \cite[Theorem 2.7]{cattiaux:guillin:2009} shows that for all $t \geq 0$ and any probability measure $\mu_0 \ll \pi$ satisfying $\densityPiLigne{\mu_0}\log(\densityPiLigne{\mu_0}) \in \Lone(\pi)$,  we have
\begin{equation}
\label{eq:convergence_log_sob}
\tvnorm{\mu_0 \PL_t - \pi} \leq \rme^{-t/\Clogsob }\defEns{2\, \Ent{\pi}{\densityPi{\mu_0}}}^{1/2} \eqsp.
\end{equation}
Under \Cref{assum:pertSt}, \cite[Corollary 5.7.2]{bakry:gentil:ledoux:2014} and the Holley-Stroock perturbation principle \cite[p. 1184]{holley:stroock:1987}, $\pi$ satisfies a log-Sobolev inequality with a constant which only depends on the  strong convexity constant $\mStD$ of $\Uun$ and $\osc[\rset^d] { \Ude }$.
Define
\begin{equation}
\label{eq:definition-varpi}
\kappaS = \frac{2 \mStD \Lun }{\mStD+\Lun} \eqsp.
\end{equation}
Denote by $\xstarun$ the minimizer of $\Uun$.
\begin{proposition}
\label{theo:kind_drift}
Assume \Cref{assum:pertSt}. Let $(\gamma_k)_{k \geq 1}$ be a nonincreasing sequence with $\gamma_1 \leq 2/(\mStD+\Lun)$.
Then for all $p \geq 1$ and $x \in \rset^d$,
\begin{multline*}
 \int_{\rset^d} \norm[2]{y- \xstarun}   \QKer^{p}_\gaStep(x,\rmd y) \leq
 \prod_{k=1}^p (1-\kappaS \gaStep_k/2)   \norm[2]{x - \xstarun}\\
+  2 \kappaS^{-1}(2d+(\gaStep_{1}+2\kappaS^{-1}) \normsup[2]{\nabla \Ude}   )\eqsp.
\end{multline*}
\end{proposition}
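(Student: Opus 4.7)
The plan is a direct second-moment recursion for the Euler chain, combining the classical co-coercivity bound for the strongly convex part $\Uun$ with a Young-type splitting to absorb the bounded perturbation $\Ude$. Write
\[
\XE_{k+1} - \xunstar = \XE_k - \xunstar - \gaStep_{k+1}\nabla \Uun(\XE_k) - \gaStep_{k+1}\nabla \Ude(\XE_k) + \sqrt{2\gaStep_{k+1}}\, \ZE_{k+1}.
\]
Since $\ZE_{k+1}$ is independent of $\XE_k$ with identity covariance, conditioning on $\XE_k$ removes the cross-terms with $\ZE_{k+1}$ and contributes an additive $2d\gaStep_{k+1}$. The main work is then to control
\[
\|\XE_k - \xunstar - \gaStep_{k+1}\nabla \Uun(\XE_k) - \gaStep_{k+1}\nabla \Ude(\XE_k)\|^2.
\]

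First I would handle the $\Uun$-part alone. By \Cref{assum:pertSt}\ref{assum:pertSt-a}, $\Uun$ is $\mStD$-strongly convex and $\nabla \Uun$ is $\Lun$-Lipschitz, so by Nesterov's co-coercivity inequality, for any $x,y\in\rset^d$,
\[
\ps{\nabla \Uun(x)-\nabla \Uun(y)}{x-y} \;\geq\; \frac{\mStD \Lun}{\mStD+\Lun}\|x-y\|^2 + \frac{1}{\mStD+\Lun}\|\nabla \Uun(x)-\nabla \Uun(y)\|^2.
\]
Applying this with $y=\xunstar$ (so $\nabla\Uun(y)=0$) and expanding the square, the condition $\gaStep_{k+1}\leq \gaStep_1\leq 2/(\mStD+\Lun)$ makes the $\|\nabla \Uun(\XE_k)\|^2$ term contribute non-positively, yielding the contraction
\[
\|\XE_k-\xunstar-\gaStep_{k+1}\nabla \Uun(\XE_k)\|^2 \leq (1-\kappaS\gaStep_{k+1})\|\XE_k-\xunstar\|^2,
\]
with $\kappaS$ as in \eqref{eq:definition-varpi}. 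Next, to incorporate the perturbation $\nabla\Ude$, I apply the elementary inequality $\|a+b\|^2 \leq (1+s)\|a\|^2+(1+s^{-1})\|b\|^2$ with the choice $s = \kappaS\gaStep_{k+1}/2$; since $(1+s)(1-\kappaS\gaStep_{k+1}) \leq 1-\kappaS\gaStep_{k+1}/2$ and $(1+s^{-1})\gaStep_{k+1}^2 \leq \gaStep_{k+1}(\gaStep_1+2\kappaS^{-1})$, and since $\|\nabla\Ude\|\leq \normsup{\nabla\Ude}$ by \Cref{assum:pertSt}\ref{assum:pertSt-b}, one obtains the one-step recursion
\[
\CPE{\|\XE_{k+1}-\xunstar\|^2}{\XE_k} \leq (1-\kappaS\gaStep_{k+1}/2)\|\XE_k-\xunstar\|^2 + \gaStep_{k+1}\bigl(2d+(\gaStep_1+2\kappaS^{-1})\normsup[2]{\nabla \Ude}\bigr).
\]

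Finally, I iterate this inequality from $k=0$ to $k=p-1$. Setting $C = 2d + (\gaStep_1+2\kappaS^{-1})\normsup[2]{\nabla\Ude}$, unrolling gives
\[
\int \|y-\xunstar\|^2 \QKer^p_\gaStep(x,\rmd y) \leq \prod_{k=1}^p(1-\kappaS\gaStep_k/2)\|x-\xunstar\|^2 + C\sum_{k=1}^p \gaStep_k\prod_{j=k+1}^p(1-\kappaS\gaStep_j/2).
\]
The remaining sum is handled by the standard telescoping identity: writing $\gaStep_k = (2/\kappaS)(1-(1-\kappaS\gaStep_k/2))$, the sum telescopes to $(2/\kappaS)\bigl(1-\prod_{j=1}^p(1-\kappaS\gaStep_j/2)\bigr) \leq 2/\kappaS$, which yields exactly the stated bound. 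The main obstacle is really the quadratic expansion with the perturbation: balancing the Young parameter so that the contraction rate degrades only by a factor $1/2$ (rather than a worse constant) while keeping the resulting $\normsup[2]{\nabla\Ude}$ coefficient of the advertised form $\gaStep_{k+1}(\gaStep_1+2\kappaS^{-1})$ is what forces the specific choice $s=\kappaS\gaStep_{k+1}/2$.
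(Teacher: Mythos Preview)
Your argument is correct and follows essentially the same route as the paper: isolate the deterministic term $\norm{x-\gaStep\nabla U(x)-\xunstar}^2$, combine Nesterov's co-coercivity for $\Uun$ with the Young splitting $\norm{a+b}^2\leq (1+\kappaS\gaStep/2)\norm{a}^2+(1+2(\kappaS\gaStep)^{-1})\norm{b}^2$ to obtain \Cref{lem:pert_norm}, then iterate and telescope exactly as you do. The only cosmetic difference is ordering (the paper applies Young first, then co-coercivity), and the paper notes explicitly that $\gaStep_k\leq\kappaS^{-1}$ (from $4\mStD\Lun\leq(\mStD+\Lun)^2$) so that each factor $1-\kappaS\gaStep_k/2$ is nonnegative---a point you use implicitly in the final inequality.
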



\begin{proof}
The proof is postponed to \Cref{proof:theo:kind_drift}.
\end{proof}
\begin{theorem}
\label{theo:convergence_TV_quantitatif_logSob}
Assume \Cref{assum:regularity} and  \Cref{assum:pertSt}. Let $\sequencen{\gaStep_k}[k \in \nset^*]$ be a nonincreasing sequence with $\gaStep_1 \leq 2/(\constSt+\Lun)$.
Then, for all $n, p \geq 1$, $n<p$, and $x \in \rset^d$, \eqref{eq:eq_base} holds with $-\log(\kappa) = \constSt \exp\{- \osc[\rset^d] { \Ude } \}$ and 
\begin{align}
\nonumber
 & C^2(\delta_x \QKer_{\gaStep}^n) \leq
\Lun\rme^{-\kappaS \GaStep_n/2} \norm{x -\xstarun}^2+ \Lun\gaStep_n(\gaStep_n+2\kappaS^{-1}) \normsup[2]{\nabla \Ude}
 +2\osc[\rset^d]{\Ude} \\
\nonumber
&+2 \Lun\kappaS^{-1}(1-\kappaS \gaStep_n)(2d+(\gaStep_{1}+2\kappaS^{-1}) \normsup[2]{\nabla \Ude}   )
-d(1+\log(2 \gaStep_n m) -2\Lun\gaStep_n )\\
\nonumber
&A(\gaStep,x) \leq 2\Lun^{2}\, \defEns{\norm[2]{\xunstar-\xstar}+2 \kappaS^{-1}(2d+(\gaStep_1+2\kappaS^{-1}) \normsup[2]{\nabla \Ude}   )} +  2\normsup[2]{\nabla \Ude}\eqsp,
\end{align}
where    $\kappaS$ is defined  in \eqref{eq:definition-varpi}.
\end{theorem}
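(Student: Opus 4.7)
The plan is to obtain \eqref{eq:eq_base} from \Cref{propo:girsanov_comparison} by establishing a quantitative version of \eqref{eq:ergodicity-langevin-expo} via a log-Sobolev inequality, and then to bound $A(\gaStep,x)$ and $C(\delta_x\QKer_{\gaStep}^n)$ using \Cref{theo:kind_drift} together with a Gaussian smoothing argument for the entropy.

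First, under \Cref{assum:pertSt}\ref{assum:pertSt-a}, the Bakry--Emery criterion gives that $\rme^{-\Uun}/Z_1$ satisfies a log-Sobolev inequality with constant $1/\mStD$. Since $\Ude$ is bounded, the Holley--Stroock perturbation principle shows that $\pi$ satisfies a log-Sobolev inequality with constant $\Clogsob = \mStD^{-1}\rme^{\osc[\rset^d]{\Ude}}$, and \eqref{eq:convergence_log_sob} yields \eqref{eq:ergodicity-langevin-expo} with $-\log(\kappa) = 1/\Clogsob = \mStD \rme^{-\osc[\rset^d]{\Ude}}$ and $C(\mu_0) = \sqrt{2\,\KL(\mu_0\mid\pi)}$. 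For the bound on $A(\gaStep,x)$, I would use $\nabla\Uun(\xunstar)=0$ and the Lipschitz property of $\nabla\Uun$ to write $\norm[2]{\nabla U(z)}\leq 2\Lun^{2}\norm[2]{z-\xunstar}+2\normsup[2]{\nabla\Ude}$, then take the supremum over $k$ of $\int\norm[2]{z-\xunstar}\,\QKer_{\gaStep}^{k}(x,\rmd z)$ using \Cref{theo:kind_drift}.

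The main work is the bound on $C(\delta_x\QKer_{\gaStep}^n)^2 = 2\KL(\mu_0\mid\pi)$, where $\mu_0=\delta_x\QKer_{\gaStep}^n$. Decompose
\begin{equation*}
\KL(\mu_0\mid\pi) = \int \log(\rmd\mu_0/\rmd y)\,\rmd\mu_0 \;+\; \int U\,\rmd\mu_0 \;+\; \log Z,
\end{equation*}
where $Z=\int \rme^{-U}$. Writing $\mu_0=\nu R_{\gaStep_n}$ with $\nu=\delta_x\QKer_{\gaStep}^{n-1}$, the density of $\mu_0$ is a mixture of Gaussians of covariance $2\gaStep_n I_d$; convexity of $t\mapsto t\log t$ combined with Jensen yields
\begin{equation*}
\int \log(\rmd\mu_0/\rmd y)\,\rmd\mu_0 \;\leq\; -\tfrac{d}{2}\bigl(1+\log(4\pi\gaStep_n)\bigr).
\end{equation*}
For the potential term, $\nabla\Uun(\xunstar)=0$ and the $\Lun$-Lipschitzness of $\nabla\Uun$ give $\Uun(y)\leq(\Lun/2)\norm[2]{y-\xunstar}$, hence $U(y)\leq (\Lun/2)\norm[2]{y-\xunstar}+\sup\Ude$. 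Computing $\int\norm[2]{y-\xunstar}\,\rmd\mu_0(y)$ by conditioning on the previous step produces an extra $2\gaStep_n d$ from the Gaussian noise and a factor $(1-\kappaS\gaStep_n)$ in front of the moment bound of \Cref{theo:kind_drift} at rank $n-1$. Finally, strong convexity of $\Uun$ yields $Z\leq \rme^{-\inf\Ude}(2\pi/\mStD)^{d/2}$.

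Combining these three estimates, the $\log(4\pi\gaStep_n)$ from the entropy and $\log(2\pi/\mStD)$ from $\log Z$ collapse to $\log(2\gaStep_n\mStD)$, the extra $\Lun\gaStep_n d$ produced by the Gaussian noise variance gives the $-2\Lun\gaStep_n$ correction inside the logarithm, and the identity $\sup\Ude-\inf\Ude=\osc[\rset^d]{\Ude}$ reproduces the $2\osc[\rset^d]{\Ude}$ term. Using $\prod_{k=1}^{n}(1-\kappaS\gaStep_k/2)\leq \rme^{-\kappaS\GaStep_n/2}$ for the exponentially contracting factor in $\norm[2]{x-\xunstar}$ finishes the bound on $C^{2}(\delta_x\QKer_{\gaStep}^n)$. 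The delicate point, and I expect the main bookkeeping difficulty, is keeping track of all three $d$-dependent contributions (Gaussian entropy, Gaussian noise second moment, and Gaussian normalization of $\pi$) so that they recombine exactly into the stated expression rather than leaving an unnecessary $\gaStep_n$-dependence inside the logarithm.
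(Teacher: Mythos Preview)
Your proposal is correct and matches the paper's proof: the paper also obtains the rate $\kappa$ from Bakry--\'Emery plus Holley--Stroock, bounds $A(\gaStep,x)$ exactly as you describe, and controls $C^2(\delta_x\QKer_\gaStep^n)=2\KL(\delta_x\QKer_\gaStep^n\mid\pi)$ by applying Jensen to the mixture representation $\delta_x\QKer_\gaStep^{n-1}R_{\gaStep_n}$, the quadratic upper bound $\Uun(y)\le \Uun(\xstarun)+(\Lun/2)\norm[2]{y-\xstarun}$, the strong-convexity bound on the normalizer, and \Cref{theo:kind_drift}. The only cosmetic difference is that the paper wraps the potential and normalizing-constant contributions into a single auxiliary function (\Cref{lem:control_entropie}) rather than splitting them into your three pieces, but the arithmetic is identical.
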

\begin{proof}
The proof is postponed to \Cref{sec:theo:convergence_TV_quantitatif_logSob}.
\end{proof}

Note that $\sup_{n \geq 1}\{ C(\delta_x
\QKer_{\gaStep}^n)/(-\log(\gaStep_n))\} < \plusinfty$, therefore
\Cref{propo:limit-zeros-tv-1}-\ref{propo:limit-zeros-tv-1a} can be
applied and $\lim_{p \to \plusinfty}\tvnorm{\delta_{\gaStep}
  \QKer_{\gaStep}^p-\pi} = 0$ if $\lim_{k \to \plusinfty} \gaStep_k =
0$ and $\lim_{k \to \plusinfty} \GaStep_k = \plusinfty$.


\section{Proofs}
\label{sec:proof}


\subsection{Proof of \Cref{lem:bound_inho_moment}}
\label{sec:proof:lem:inh}
By a straightforward induction, we get for all $n \geq 0$ and $x \in \rset^d$,
\begin{equation}
  \label{eq:22}
  \QKer^n_{\gaStep} \Vly(x)\leq \lambda^{ \Gammasum_{n}}\Vly(x) +c \sum_{i=1}^n \gaStep_i \lambda^{ \Gammasum_{i+1,n}} \eqsp.
\end{equation}
Note that for all $n \geq 1$, we have since $(\gaStep_k)_{k \geq 1}$ is nonincreasing and for all $t \geq 0$, $\lambda^t = 1+\int_0^t   \lambda^s \log(\lambda) \rmd s $,
\begin{align*}
&\sum_{i=1}^n \gaStep_i \lambda^{ \Gammasum_{i+1,n}} \leq \sum_{i=1}^n \gaStep_i \prod_{j=i+1}^n (1+ \lambda^{\gaStep_1}  \log(\lambda) \gaStep_j) \\
&\leq (- \lambda^{\gaStep_1}\log(\lambda))^{-1}\sum_{i=1}^n \gaStep_i \defEns{\prod_{j=i+1}^n (1+\lambda^{\gaStep_1} \log(\lambda)  \gaStep_j) -\prod_{j=i}^n (1+  \lambda^{\gaStep_1} \log(\lambda) \gaStep_j)}
\\
& \leq  (-\lambda^{\gaStep_1}\log(\lambda) )^{-1} \eqsp.
\end{align*}
The proof is then completed using this inequality in \eqref{eq:22}.

\subsection{Proof of \Cref{lem:majo:U}}
\label{sec:proof:majo:U}
By \Cref{assum:regularity}, \Cref{assum:drift-condition-reinforced}, the Cauchy-Schwarz inequality and $\nabla U(\xstar) = 0$,  for all $x \in \rset^d$, $\norm{x} \geq \Mrho$, we have
  \begin{align*}
&    U(x) - U(\globmin) = \int_{0}^1\ps{\nabla U(\globmin + t(x-\globmin))}{x - \globmin} \rmd t \\
&\geq \int_{0}^{\frac{\Mrho}{\norm{x - \globmin}}} \ps{\nabla U(\globmin + t (x - \globmin))}{x-\globmin} \rmd t  \\
& \phantom{aaaaa}+ \int_{\frac{\Mrho}{\norm{x-\globmin}}}^1 \ps{\nabla U(\globmin + t(x- \globmin))}{t(x - \globmin)} \rmd t \\
& \geq  - \Mrho^2 \LU/2  + \rho \norm[\alpha]{x-\globmin}(\alpha+1)^{-1}\defEns{1-(\Mrho/\norm{x-\globmin})^{\alpha+1}} \eqsp.
  \end{align*}
On the other hand using again \Cref{assum:regularity}, the Cauchy-Schwarz inequality and $\nabla U(\xstar) = 0$, for all $x \in \ball{\globmin}{\Mrho}$,
 \begin{equation*}
    U(x) - U(\globmin) = \int_{0}^1\ps{\nabla U(\globmin + t(x-\globmin))}{x-\globmin} \rmd t \geq - \Mrho^2 \LU/2  \eqsp,
\end{equation*}
which concludes the proof.

\subsection{Proof of \Cref{prop:poincare-constant-lyapunov}}
\label{proof:prop:poincare-constant-lyapunov}
 For all $x \in \rset^d$, we have
\[
\generatorL \VSAlpha(x) = \varsigma(1-\varsigma)\left\{-\norm[2]{\nabla U(x)} + (1-\varsigma)^{-1}\Delta U(x)\right\} \VSAlpha (x) \eqsp.
\]
If $\alpha > 1$, by the Cauchy-Schwarz inequality, under \Cref{assum:regularity}-\Cref{assum:drift-condition-reinforced} for all $x \in \rset^d$, $\Delta U (x) \leq dL$  and $\norm{\nabla U(x)} \geq \rho \norm{x-\xstar}^{\alpha-1} $ for $\norm{x-\xstar} \geq M_{\rho}$. Then, for all $x \not \in \Setdrift_{\varsigma}$,
\[
\generatorL \VSAlpha(x) \leq \varsigma(1-\varsigma)\left\{-\rho \norm{x-\xstar}^{2(\alpha-1)} + (1-\varsigma)^{-1}dL \right\} \VSAlpha(x) \leq -\varsigma dL \VSAlpha(x) \eqsp,
\]
and $\sup_{\{x  \in \Setdrift_{\varsigma}\}} \generatorL \VSAlpha(x) \leq\varsigma dL  \sup\nolimits_{\{y  \in \Setdrift_{\varsigma} \}}\{ \VSAlpha(y)\}$.

\subsection{Proof of \Cref{prop:drift-R-gaStep}}
\label{proof:prop:drift-R-gaStep}
By \Cref{assum:drift-condition-reinforced}, for all $x \not \in \ball{\globmin}{M_{\rho}}$,
\begin{equation}
\label{eq:gradient_explosed}
\norm{\nabla U(x)} \geq \rho \norm{x-\globmin}^{\alpha-1} \eqsp.
\end{equation}
Since under \Cref{assum:regularity}, for all $x,y \in \rset^d$, $U(y) \leq U(x) + \ps{\nabla{U}(x)}{y-x} + (L/2) \|y-x\|^2$, we have for all $\gaStep \in \ooint{0,\bargaStep}$ and $x \in \rset^d$,
\begin{align}
\nonumber
&R_\gaStep \Vly(x)/\Vly(x)\\
\nonumber
& = (4 \pinumber \gaStep)^{-d/2}\int_{\rset^d} \exp \parenthese{\defEns{U(y)-U(x)}/2 - (4\gaStep)^{-1}\norm[2]{y - x + \gaStep \nabla U(x)}} \rmd y \\
\nonumber
&
\leq (4 \pinumber \gaStep)^{-d/2}\int_{\rset^d} \exp \parenthese{-4^{-1}\gaStep\norm[2]{\nabla U(x)} - (4\gaStep)^{-1}(1-\gaStep L)\norm[2]{y - x}} \rmd y \\
\nonumber
&
\leq
(1-\gaStep L)^{-d/2} \exp(-4^{-1}\gaStep\norm[2]{\nabla U(x)})  \eqsp,
\end{align}
where we used in the last line that $\gaStep < L^{-1}$. Since $\log(1- L \gaStep) = -L \int_0^\gaStep (1-Lt)^{-1} \rmd t$, for all $\gaStep \in \ocint{0,\bargaStep}$, $\log(1-L\gaStep) \geq - L \gaStep (1-L \bargaStep)^{-1}$. Using this inequality, we get
\begin{equation}
\label{eq:drift_Euler_weak_assumption2}
R_\gaStep \Vly(x)/\Vly(x) \leq \lDriftgaStep^{-\gaStep}\exp \parenthese{-4^{-1} \gaStep \norm[2]{\nabla U(x)}} \eqsp.
\end{equation}
By \eqref{eq:gradient_explosed}, for all $x \in \rset^d$, $\norm{x-\xstar} \geq \KDriftRgaStep$, we have
\begin{equation}
\label{eq:drift_Euler_weak_assumption_outside_compact}
\RKer_\gaStep \Vly(x) \leq  \lDriftgaStep^{\gaStep}\Vly(x) \eqsp.
\end{equation}
Also by \eqref{eq:drift_Euler_weak_assumption2} and since for all $t \geq 0$, $\rme^{ t} - 1 \leq t\rme^{ t}$, we get for all $x \in \rset^d$
\[
R_\gaStep \Vly(x) -\lDriftgaStep^{\gaStep}  \Vly(x) \leq \lDriftgaStep^{\gaStep} (\lDriftgaStep^{-2\gaStep}-1) \Vly(x)\leq -2 \gaStep \log(\lDriftgaStep)  \lDriftgaStep^{-\bargaStep} \Vly(x) \eqsp.
\]
The proof is completed combining the last inequality and \eqref{eq:drift_Euler_weak_assumption_outside_compact}.

\subsection{Proof of \Cref{theo:convergence_TV_non_quantitatif}}
\label{proof:theo:convergence_TV_non_quantitatif}
We first bound $A(\gaStep,x)$ for all $x \in \rset^d$. Let $x \in \rset^d$.
By \Cref{assum:regularity}, we have $ \PE_x[ \| \nabla U(X_k) \|^2] \leq L^{2} \PE_x[ \| X_k - \globmin \|^2]$.
Consider now the function $\phi_{\alpha} : \rset_+ \to \rset_+$ defined for all $t \geq 0$ by $\phi_{\alpha}(t) = \exp( A_{\alpha} (t + B_{\alpha}  ) ^{\alpha/2} )$ where $A_{\alpha} = \rho/(2 (\alpha+1))$ and $B_{\alpha}= \defEns{(2-\alpha)/(\alpha A_{\alpha})}^{2/\alpha}$.  Since $\phi_{\alpha}$ is convex and invertible on $\rset_+$, we get using the Jensen inequality and \Cref{lem:majo:U} for all $k \geq 0$:
\begin{equation*}
\PE_x[ \| X_k - \globmin \|^2]  \leq \phi_{\alpha}^{-1}\parenthese{\PE_x[ \phi_{\alpha}\parenthese{ \| X_k - \globmin \|^2}]}   \leq \phi_{\alpha}^{-1} \parenthese{\rme^{\Calpha /2 + B_{\alpha}^{\alpha/2}}\PE_x[ \Vly( X_k)]} \eqsp,
\end{equation*}
where $\Vly(x)= \exp( U(x)/2)$.
Using that for all $t \geq 0$, $\phi_{\alpha}^{-1}(t) \leq (A_{\alpha}^{-1}\log(t))^{2/\alpha}$ and \Cref{lem:bound_inho_moment}, we get
\begin{equation*}
\sup_{k \geq 0}\PE_x[ \| X_k - \globmin \|^2] \leq \parenthese{A_{\alpha}^{-1}\parentheseDeux{\Calpha /2 + B_{\alpha}^{\alpha/2}+\log\defEns{ \FunfoD(\lDriftgaStep,\bDriftRgaStep(\gaStep_1),\Vly(x)) }}}^{2/\alpha} \eqsp.
\end{equation*}
Eq.~\eqref{eq:definition-B-n-p-0} follows from \Cref{coro:poincare-constant-lyapunov}, \Cref{prop:drift-R-gaStep} and \Cref{lem:bound_inho_moment}.

\subsection{Proof of \Cref{theo:convergence_TV_gaStep_const}}
\label{proof:theo:convergence_TV_gaStep_const}

\begin{lemma}
\label{lem:contol_gamma_const}
Let $\mu$ and $\nu$ be two probability measures on $(\rset^d,\mathcal{B}(\rset^d))$ and $\Vly: \rset^d \to \coint{1,\infty}$ be a measurable function. Then
\[
\Vnorm[\Vly]{\mu - \nu} \leq \sqrt{2} \left\{ \nu(V^2) + \mu(V^2) \right\}^{1/2} \KL^{1/2}(\mu \vert \nu) \eqsp.
\]
\end{lemma}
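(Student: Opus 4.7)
The plan is to use the classical reduction to Hellinger distance, which gives the right power of $\KL$. First, observe that if $\mu$ is not absolutely continuous with respect to $\nu$, then $\KL(\mu\vert \nu) = +\infty$ and the inequality is trivial, so I may assume $\mu \ll \nu$ and set $h = \rmd\mu/\rmd\nu$. For any Borel $f$ with $\Vnorm[V]{f} \leq 1$, I write
\[
\absolute{\mu(f) - \nu(f)} = \absolute{\int f(h-1)\, \rmd\nu} \leq \int V\, \absolute{h-1}\, \rmd\nu,
\]
and then exploit the key algebraic identity $h - 1 = (\sqrt{h}-1)(\sqrt{h}+1)$, which gives $\absolute{h-1} = (\sqrt{h}+1)\absolute{\sqrt{h}-1}$ since $\sqrt{h}+1 > 0$.

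Next, I would apply the Cauchy--Schwarz inequality with the natural split
\[
\int V(\sqrt{h}+1)\absolute{\sqrt{h}-1}\, \rmd\nu \leq \parenthese{\int V^2(\sqrt{h}+1)^2\, \rmd\nu}^{1/2} \parenthese{\int (\sqrt{h}-1)^2\, \rmd\nu}^{1/2}.
\]
The first factor is controlled by the elementary bound $(\sqrt{h}+1)^2 \leq 2(h+1)$, which yields $\int V^2(\sqrt{h}+1)^2\, \rmd\nu \leq 2\{\mu(V^2) + \nu(V^2)\}$. The second factor is exactly the squared Hellinger distance $H^2(\mu,\nu)$.

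To finish, I would invoke the standard pointwise inequality $(\sqrt{t}-1)^2 \leq t\log t - t + 1$ for $t \geq 0$ (a convex-function comparison that is checked by differentiating at $t=1$); integrating against $\nu$ and using that $\mu$ and $\nu$ are probability measures gives $H^2(\mu,\nu) \leq \KL(\mu\vert \nu)$. Assembling the factors produces
\[
\absolute{\mu(f)-\nu(f)} \leq \sqrt{2}\{\mu(V^2)+\nu(V^2)\}^{1/2} \KL^{1/2}(\mu\vert \nu),
\]
and taking the supremum over $\Vnorm[V]{f}\leq 1$ yields the stated bound. There is no real obstacle here: the only non-trivial step is recognising that the algebraic factorisation of $h-1$ turns the naive $V$-weighted $L^1$ control into a product where one factor is Hellinger (hence $\KL^{1/2}$) rather than chi-squared (which would only give $\KL^{1/4}$ through a direct Pinsker bound).
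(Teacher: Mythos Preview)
Your proof is correct and is essentially the same as the paper's: both reduce to $\int V\,|h-1|\,\rmd\nu$ with $h=\rmd\mu/\rmd\nu$ and then apply Cauchy--Schwarz together with a pointwise bound. The only cosmetic difference is that the paper uses the single inequality $|1-t|^2 \leq 2(1+t)(t\log t - t + 1)$ directly, whereas you factor $|h-1| = (\sqrt{h}+1)\,|\sqrt{h}-1|$ and bound the two pieces separately via $(\sqrt{t}+1)^2 \leq 2(1+t)$ and the Hellinger--KL inequality $(\sqrt{t}-1)^2 \leq t\log t - t + 1$; multiplying these two recovers exactly the paper's pointwise bound.
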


\begin{proof}
Without losing any generality, we assume that  $\mu \ll \nu$.
 For all $t \in \ccint{0,1}$, $t \log(t) -t +1 = \int_t^1(u-t)u^{-1} \rmd u \geq 2^{-1}(1-t)^2$, and on $\coint{1,\plusinfty}$, $t \mapsto 2(1+t)(t\log(t) -t+1)-(1-t)^2$ is nonincreasing.
Therefore, for all $t \geq 0$,
\begin{equation}
\label{eq:bound_Pinsker_Vnorm}
\abs{1-t} \leq (2(1+t)(t\log(t) -t+1))^{1/2} \eqsp.
\end{equation}
Then, we have:
\begin{align}
\nonumber
&\Vnorm[\Vly]{\mu - \nu} = \sup_{f \in \functionspace[]{\rset^d}, \Vnorm[\Vly]{f} \leq 1} \abs{\int_{\rset^d} f(x) \rmd \mu (x) - \int_{\rset^d} f(x) \rmd \nu (x) } \\
\nonumber
& =  \sup_{f \in \functionspace[]{\rset^d}, \Vnorm[\Vly]{f} \leq 1} \abs{ \int_{\rset^d} f(x) \defEns{\density{\nu}{\mu} - 1} \rmd \nu (x) } \leq
\int_{\rset^d} \Vly(x) \abs{\density{\nu}{\mu} - 1} \rmd \nu (x) \eqsp.
\end{align}
Using \eqref{eq:bound_Pinsker_Vnorm} and the Cauchy-Schwarz inequality in the previous inequality concludes the proof.
\end{proof}

\begin{proof}[Proof of \Cref{theo:convergence_TV_gaStep_const}]
  First note that by the triangle inequality and \Cref{coro:poincare-constant-lyapunov}, for all $p \geq 1$
 \begin{equation}
\label{eq:gaStep_const_proof1}
 \Vnorm[\Vlysqrt]{\pi - \delta_x \QKer_\gaStep^p} \leq \ConstVnormUnquart \kappa^{ p \gaStep}\Vlysqrt(x)+ \Vnorm[\Vlysqrt]{\delta_x \PL_{\GaStep_p} - \delta_x \QKer_\gaStep^p} \eqsp.
 \end{equation}
 We now bound the second term of the right hand side. Let $\kgaStep = \ceil{\gaStep^{-1}}$ and $\qgaStep$ and $\rgaStep$ be respectively the quotient and the remainder of the Euclidean division of $p$ by $\kgaStep$.  The triangle inequality implies $\Vnorm[\Vlysqrt]{\delta_x \PL_{\GaStep_p} - \delta_x \QKer_\gaStep^p} \leq \AtermProofBloc + \BtermProofBloc$
with
\begin{align*}
A &= \VnormEq[\Vlysqrt]{\delta_x \QKer_\gaStep^{(\qgaStep-1) \kgaStep }\PL_{\GaStep_{(\qgaStep-1)\kgaStep,p}} - \delta_x \QKer^{(\qgaStep-1) \kgaStep }_\gaStep \QKer^{(\qgaStep-1) \kgaStep+1,p }_\gaStep} \\
B&=\sum_{i=1}^{\qgaStep} \VnormEq[\Vlysqrt]{\delta_x \QKer^{(i-1)\kgaStep}_\gaStep \PL_{\GaStep_{(i-1)\kgaStep+1,p}} - \delta_x \QKer^{i \kgaStep}_\gaStep \PL_{\GaStep_{i\kgaStep +1,p}} } \eqsp.
\end{align*}
It follows from \Cref{coro:poincare-constant-lyapunov} and $\kgaStep \geq \gaStep^{-1}$ that
\begin{equation}
\label{eq:gaStep_cont_proof2}
B  \leq \sum_{i=1}^{\qgaStep} \ConstVnormUnquart \kappa^{\qgaStep-i} \VnormEq[\Vlysqrt]{\delta_x \QKer^{(i-1)\kgaStep}_\gaStep \PL_{\GaStep_{(i-1)\kgaStep+1,i \kgaStep}} - \delta_x \QKer^{i \kgaStep}_\gaStep } \eqsp.
\end{equation}
We now bound each term of the sum in the right hand side.
For all initial distribution $\nu_0$ on
$(\rset^d,\mathcal{B}(\rset^d))$ and $i,j \geq 1$, $i <j$, it follows from \Cref{lem:contol_gamma_const},
 \cite[Theorem 4.1, Chapter 2]{kullback:1997} and \eqref{eq:pinsker_girsanov_2}:
\begin{align}
\nonumber
  &\Vnorm[V^{1/2}]{\nu_0  Q^{i,j}_{\gaStep} - \nu_0 \PL_{\GaStep_{i,j}}}^2 \leq 2 \parenthese{ \nu_0 Q^{i,j}_{\gaStep}(V) + \nu_0 \PL_{\GaStep_{i,j}}(V)}  \KL(\nu_0  Q^{i,j}_{\gaStep} \vert \nu_0 \PL_{\GaStep_{i,j}})
\\
\nonumber
&\leq
2 \LU^2 \parenthese{ \nu_0 Q^{i,j}_{\gaStep}(V) + \nu_0 \PL_{\GaStep_{i,j}}(V)}\\
\nonumber
& \phantom{\parenthese{  + \nu_0 \PL_{\GaStep_{i,j}}(V)}^{1/2}}\times (j-i) \parenthese{ \gaStep^2 d+(\gaStep^3/3)\sup_{k \in \{i,\cdots,j\}} \nu_0 Q^{i,k-1}_{\gaStep}(\norm[2]{\nabla U})   } \eqsp.
\end{align}
\Cref{prop:poincare-constant-lyapunov} implies by the proof of \cite[Theorem 6.1]{meyn:tweedie:1993:3} that for all $y \in \rset^d$ and $t \geq 0$: $ \PL_t \Vlyb (y) \leq \Vlyb(y) + \bUndemi /\thetaUndemi$.  Then, using  \Cref{prop:drift-R-gaStep}, \Cref{lem:bound_inho_moment} and $\kgaStep \geq \gaStep^{-1}$ in \eqref{eq:gaStep_cont_proof2}, we get
\begin{multline*}
\sup_{i \in \{1,\cdots,\qgaStep\}}\VnormEq[\Vlysqrt]{\delta_x \QKer^{(i-1)\kgaStep}_\gaStep \PL_{\GaStep_{(i-1)\kgaStep+1,i \kgaStep}} - \delta_x \QKer^{i \kgaStep}_\gaStep }^2 \\
\leq 2^{-1}(1+\gaStep) \LU^2   \defEns{2\FunfoD(\lDriftgaStep,\bDriftRgaStep,\Vlyb(x)) +  \bUndemi /\thetaUndemi} \\
\times  \defEns{  \gaStep d+3^{-1}\gaStep^{2} \Vnorm[\Vlyb^{1/2}]{\nabla U}^2 \, \FunfoD(\lDriftgaStep,\bDriftRgaStep,\Vlyb(x))  } \eqsp.
\end{multline*}
Finally, $A$ can be bounded along the same lines.
\end{proof}


\subsection{Proof of \Cref{theo:convergence_TV_quantitatif_Poincare}}
\label{sec:theo:convergence_TV_quantitatif_Poincare}
Denote for $\gaStep > 0$, $r_\gaStep: \rset^d \times \rset^d \to \rset^d$ the transition density of $R_\gaStep$ defined for $x,y \in \rset^d$ by
\begin{equation}
\label{eq:definition_r_gaStep}
r_\gaStep(x,y) = (4 \pinumber \gaStep)^{-1} \exp(-(4\gaStep)^{-1}\norm[2]{y-x+\gaStep \nabla U(x)}) \eqsp.
 \end{equation}
For all $n \geq 1$, denote by $q_\gaStep^n: \rset^d \times \rset^d \to \rset^d$ the transition density associated with $Q_\gaStep^n$ defined by induction by: for all $x,y \in \rset^d$
\begin{equation}
\label{eq:definition_q_gaStep}
q_\gaStep^1(x,y)  = r_{\gaStep_1}(x,y) \eqsp, \qquad q_\gaStep^{n+1}(x,y) = \int_{\rset^d} q_\gaStep^n(x,z) r_{\gaStep_{n+1}}(z,y) \rmd z \text{ for $n \geq 1$} \eqsp.
\end{equation}
\begin{lemma}
Assume \Cref{assum:regularity}. Let $(\gaStep_k)_{k \geq 1}$ be a nonincreasing sequence with $\gaStep_1 < L$. Then for all $n \geq 1$ and $x,y \in \rset^d$,
\[
q_{\gaStep}^n(x,y) \leq \frac{\exp \parenthese{2^{-1}(U(x) - U(y)) -(2 \sigma_{\gaStep,n})^{-1} \norm[2]{y-x}}}{(2 \pinumber \sigma_{\gaStep,n} \prod_{i=1}^n(1-L\gaStep_i))^{d/2} } \eqsp,
\]
where $\sigma_{\gaStep,n} = \sum_{i=1}^n 2\gaStep_i(1-L \gaStep_i)^{-1}$.
\end{lemma}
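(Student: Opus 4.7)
The plan is to establish the base case $n=1$ by a direct pointwise bound on $r_{\gaStep_1}(x,y)$, and then propagate the shape of the bound to $q_{\gaStep}^{n+1}$ by induction using the recursion $q_\gaStep^{n+1}(x,y) = \int q_\gaStep^n(x,z) r_{\gaStep_{n+1}}(z,y) \rmd z$ and the standard Gaussian-convolution identity.

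For the base case, I would expand the square inside the exponential in $r_\gaStep(x,y)$ to get
\[
-(4\gaStep)^{-1}\norm[2]{y-x+\gaStep \nabla U(x)} = -(4\gaStep)^{-1}\norm[2]{y-x} - 2^{-1}\ps{\nabla U(x)}{y-x} - (\gaStep/4)\norm[2]{\nabla U(x)}.
\]
Under \Cref{assum:regularity}, the descent lemma gives $U(y) \leq U(x) + \ps{\nabla U(x)}{y-x} + (L/2)\norm[2]{y-x}$, hence $-2^{-1}\ps{\nabla U(x)}{y-x} \leq 2^{-1}(U(x)-U(y)) + (L/4)\norm[2]{y-x}$. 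Substituting this, dropping the non-positive term $-(\gaStep/4)\norm[2]{\nabla U(x)}$, and rewriting $4\gaStep = 2\pinumber \cdot (2\gaStep_1/(1-L\gaStep_1)) \cdot (1-L\gaStep_1) / \pinumber$ — equivalently, using $4\gaStep_1/(1-L\gaStep_1) = 2\sigma_{\gaStep,1}$ and $4\pinumber\gaStep_1 = 2\pinumber\sigma_{\gaStep,1}(1-L\gaStep_1)$ — yields the claim for $n=1$.

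For the inductive step, assuming the bound at level $n$, I would plug both bounds into the recursion. The prefactors $\exp(2^{-1}(U(x)-U(z)))$ and $\exp(2^{-1}(U(z)-U(y)))$ telescope to $\exp(2^{-1}(U(x)-U(y)))$, pulling out of the integral. What remains is the Gaussian convolution
\[
\int_{\rset^d} \exp\parenthese{-\frac{\norm[2]{z-x}}{2\sigma_{\gaStep,n}} - \frac{\norm[2]{y-z}}{2\sigma_{n+1}'}}\rmd z = (2\pinumber)^{d/2}\parenthese{\frac{\sigma_{\gaStep,n}\sigma_{n+1}'}{\sigma_{\gaStep,n}+\sigma_{n+1}'}}^{d/2}\exp\parenthese{-\frac{\norm[2]{y-x}}{2(\sigma_{\gaStep,n}+\sigma_{n+1}')}},
\]
where $\sigma_{n+1}' = 2\gaStep_{n+1}/(1-L\gaStep_{n+1})$. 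The $(\sigma_{\gaStep,n}\sigma_{n+1}')^{d/2}$ factor cancels against a matching factor in the denominator, and the additivity $\sigma_{\gaStep,n}+\sigma_{n+1}' = \sigma_{\gaStep,n+1}$ together with $(1-L\gaStep_{n+1})$ multiplying the existing product of $(1-L\gaStep_i)$ reassembles exactly the claimed expression at level $n+1$.

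There is no substantive obstacle: the only non-trivial inputs are the descent lemma (to kill the linear term in $y-x$ at the cost of $U(x)-U(y)$) and the Gaussian convolution identity. The condition $\gaStep_1 < L^{-1}$ (which, I assume, is what is meant by $\gaStep_1 < L$ in the statement) is needed precisely so that each $1-L\gaStep_i > 0$, making $\sigma_{\gaStep,n}$ positive and keeping the Gaussian factors honest throughout the induction.
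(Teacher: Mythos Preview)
Your proposal is correct and follows essentially the same argument as the paper: establish the one-step bound on $r_\gaStep$ via the descent lemma (dropping the $-(\gaStep/4)\norm[2]{\nabla U(x)}$ term), then propagate by induction using the Gaussian-convolution identity so that the $U$-terms telescope and the variances add. The paper presents the inductive Gaussian integral slightly more explicitly, but the mechanism and the resulting normalizing constants are identical.
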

\begin{proof}
Under \Cref{assum:regularity},  we have for all $x,y \in \rset^d$, $U(y) \leq U(x) + \ps{\nabla U(x)}{y-x} + (L/2) \norm[2]{y-x}$, which implies that  for all $\gaStep \in \ooint{0,L^{-1}}$
\begin{align}
\label{eq:bound_r_gaStep}
r_\gaStep(x,y) \leq (4 \pinumber \gaStep)^{-d/2}\exp \parenthese{2^{-1}(U(x) - U(y)) - (1-L \gaStep)(4 \gaStep)^{-1} \norm[2]{y-x} } \eqsp.
\end{align}
Then, the proof of the claimed inequality is by induction. By \eqref{eq:bound_r_gaStep}, the inequality holds for $n=1$. Now assume that it holds for $n \geq 1$. By induction hypothesis and \eqref{eq:bound_r_gaStep} applied for $\gaStep = \gaStep_{n+1}$, we have
\begin{align*}
&q_\gaStep^{n+1}(x,y)
\leq (4 \pinumber \gaStep_{n+1})^{-d/2}\defEns{2 \pinumber \sigma_{\gaStep,n} \prod_{i=1}^n(1-L\gaStep_i)}^{-d/2} \exp \parenthese{2^{-1}(U(x) - U(y))} \\
& \phantom{aaaaa}
\times \int_{\rset^d} \exp \parenthese{-(2 \sigma_{\gaStep,n})^{-1} \norm[2]{z-x}  - (1-L \gaStep_{n+1})(4 \gaStep_{n+1})^{-1} \norm[2]{z-y}} \rmd z \\
&  \leq (4 \pinumber \gaStep_{n+1})^{-d/2}\defEns{2 \pinumber \sigma_{\gaStep,n} \prod_{i=1}^n(1-L\gaStep_i)}^{-d/2}( \sigma_{\gaStep,n}^{-1} + (1-L \gaStep_{n+1})/(2\gaStep_{n+1}))^{-d/2} \\
& \phantom{aaaaa}
\times  (2 \pinumber )^{d/2} \exp\parenthese{2^{-1}(U(x) - U(y)) -(2 \sigma_{\gaStep,n+1})^{-1} \norm[2]{y-x}} \eqsp.
\end{align*}
Rearranging terms in the last inequality concludes the proof.
\end{proof}
\begin{lemma}
\label{lem:control_normalization_constant}
Assume \Cref{assum:regularity} and \Cref{assum:drift-condition-reinforced}. Then $\int_{\rset^d} \rme^{-U(y)} \rmd y  \leq \vartheta_U$ where
\begin{equation}
\label{eq:def_theta_U}
\vartheta_U \eqdef  \rme^{ \Calpha}\frac{ (2\pinumber)^{(d+1)/2} (d-1)!}{\rhoU^d \Gammabf((d+1)/2)} \eqsp,
\end{equation}
and $\Calpha$ is given in \eqref{eq:calpha}.
      \end{lemma}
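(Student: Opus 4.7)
The bound rests on extracting the superexponential tail of $\rme^{-U}$ from \Cref{lem:majo:U} and then carrying out a radial integration. First I would apply \Cref{lem:majo:U}, which under \Cref{assum:regularity} and \Cref{assum:drift-condition-reinforced} gives $U(y) \geq \rho\norm{y-\globmin}^{\alpha}/(\alpha+1) - \Calpha$ for every $y \in \rset^d$, so that
$$\int_{\rset^d}\rme^{-U(y)}\rmd y \;\leq\; \rme^{\Calpha}\int_{\rset^d}\rme^{-\rho\norm{y-\globmin}^{\alpha}/(\alpha+1)}\rmd y.$$
Translation invariance of the Lebesgue measure then lets me replace $\norm{y-\globmin}$ by $\norm{y}$, reducing the problem to a centred radial integral.

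Second, since $\alpha \in \ocint{1,2}$, the elementary inequality $r^{\alpha}\geq r-1$ holds for every $r \geq 0$ (use $r^{\alpha}\geq r$ on $\coint{1,\infty}$ from convexity, and $r^{\alpha}\geq 0 \geq r-1$ on $\ccint{0,1}$). Consequently
$$\int_{\rset^d}\rme^{-\rho\norm{y}^{\alpha}/(\alpha+1)}\rmd y \;\leq\; \rme^{\rho/(\alpha+1)}\int_{\rset^d}\rme^{-\rho\norm{y}/(\alpha+1)}\rmd y,$$
trading the delicate superexponential bound for a plain exponential one that admits a closed-form evaluation.

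Third, passing to spherical coordinates and recalling that the unit sphere in $\rset^d$ has surface measure $2\pinumber^{d/2}/\Gammabf(d/2)$, the substitution $u = \rho r/(\alpha+1)$ yields
$$\int_{\rset^d}\rme^{-\rho\norm{y}/(\alpha+1)}\rmd y \;=\; \frac{2\pinumber^{d/2}(\alpha+1)^{d}\,\Gammabf(d)}{\Gammabf(d/2)\,\rho^{d}} \;=\; \frac{2\pinumber^{d/2}(\alpha+1)^{d}(d-1)!}{\Gammabf(d/2)\,\rho^{d}}.$$

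Finally, to bring this expression into the normalisation prescribed by \eqref{eq:def_theta_U}, I would use Legendre's duplication formula $\Gammabf(d/2)\,\Gammabf((d+1)/2)=2^{\,1-d}\sqrt{\pinumber}\,\Gammabf(d)$ to trade $\Gammabf(d/2)$ for $\Gammabf((d+1)/2)$ in the denominator, which regroups the factors of $2$ and $\pinumber$ into the announced $(2\pinumber)^{(d+1)/2}$, absorb the bounded constant $\rme^{\rho/(\alpha+1)}$ together with the explicit $(\alpha+1)^d$ into the numerical prefactor, and recover the claimed upper bound $\vartheta_U$. The core of the argument is really just \Cref{lem:majo:U} combined with a one-variable Gamma integral; the only mildly fiddly step, which I expect to be the main bookkeeping obstacle, is the final Gamma-function rearrangement needed to match the precise form of $\vartheta_U$ stated in \eqref{eq:def_theta_U}.
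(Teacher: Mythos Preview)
Your approach is exactly the paper's: invoke \Cref{lem:majo:U}, pass to spherical coordinates, and reduce to a one-variable Gamma integral. The paper in fact writes the linear lower bound $U(x)\geq \rho\norm{x-\xstar}/(\alpha+1)-\Calpha$ directly, citing \Cref{lem:majo:U}; since that lemma only gives the $\norm{x-\xstar}^{\alpha}$ bound, your observation that $r^{\alpha}\geq r-1$ is needed to justify this step is a genuine improvement over the paper's presentation (and costs the harmless factor $\rme^{\rho/(\alpha+1)}$).

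There is, however, a real gap in your final bookkeeping. The Legendre duplication identity $\Gammabf(d/2)\Gammabf((d+1)/2)=2^{1-d}\sqrt{\pinumber}\,(d-1)!$ does \emph{not} move $\Gammabf((d+1)/2)$ into the denominator: substituting it into your exact value $\frac{2\pinumber^{d/2}(\alpha+1)^{d}(d-1)!}{\Gammabf(d/2)\,\rho^{d}}$ yields $\frac{2^{d}\pinumber^{(d-1)/2}(\alpha+1)^{d}\,\Gammabf((d+1)/2)}{\rho^{d}}$, with $\Gammabf((d+1)/2)$ upstairs. The quantity $(2\pinumber)^{(d+1)/2}/\Gammabf((d+1)/2)$ that appears in $\vartheta_U$ is not an algebraic rearrangement of the sphere surface area $2\pinumber^{d/2}/\Gammabf(d/2)$; it is a strict (and rather crude) upper bound for it, which the paper simply asserts. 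If you want to land on the stated $\vartheta_U$, you must prove the inequality $2\pinumber^{d/2}/\Gammabf(d/2)\leq (2\pinumber)^{(d+1)/2}/\Gammabf((d+1)/2)$ separately, e.g.\ from $\Gammabf((d+1)/2)\leq 2^{(d-1)/2}\sqrt{\pinumber}\,\Gammabf(d/2)$.

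Finally, you cannot ``absorb'' the factors $(\alpha+1)^{d}$ and $\rme^{\rho/(\alpha+1)}$ into $\vartheta_U$, because $\vartheta_U$ is an explicitly defined constant, not a generic one. In fact the lemma as printed carries a typo: the macro $\rhoU$ expands to $\eta$, the parameter of \Cref{assum:convex}, which is not even in scope here. When the lemma is actually used (in \Cref{theo:convergence_TV_quantitatif_Poincare}) the constant reads $(\alpha+1)^{d}(2\pinumber)^{(d+1)/2}(d-1)!/(\rho^{d}\Gammabf((d+1)/2))$, which is what both your computation and the paper's proof produce, up to your extra $\rme^{\rho/(\alpha+1)}$.
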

\begin{proof}
By \Cref{lem:majo:U}, for all $x \in \rset^d$, $ U(x) \geq \rho \norm{x-\xstar}/(\alpha+1) - \Calpha$.
Using the spherical coordinates, we get
\begin{equation*}
  \int_{\rset^d} \rme^{-U(y)} \rmd y \leq \rme^{ \Calpha}\defEns{ (2\pinumber)^{(d+1)/2} / \Gammabf((d+1)/2)} \int_{0}^{\plusinfty} \rme^{-\rho t/(\alpha+1)} t^{d-1} \rmd t \eqsp.
\end{equation*}
Then the proof is concluded by a straightforward calculation.
  \end{proof}

\begin{corollary}
\label{lem:control_variance}
Assume \Cref{assum:regularity} and \Cref{assum:drift-condition-reinforced}.
Let $(\gaStep_k)_{k \geq 1}$ be a nonincreasing sequence with $\gaStep_1 < L$. Then for all $n \geq 1$ and $x \in \rset^d$,
\[
\VarDeux {\pi}{\densityPi{\delta_x Q^n_\gaStep}} \leq \parenthese{\vartheta_U \exp(U(x))} \parenthese{4 \pinumber \defEns{\prod_{k=1}^n (1-L \gaStep_k)}^2\sum_{i=1}^n \frac{\gaStep_i}{1-L\gaStep_i}}^{-d/2} \eqsp,
\]
where $\vartheta_U$ is given by \eqref{eq:def_theta_U}.
  \end{corollary}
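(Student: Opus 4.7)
The plan is to reduce the variance bound to a second-moment bound on the density of $\delta_x Q_{\gaStep}^n$ relative to $\pi$, and then use the pointwise Gaussian-type upper bound on $q_{\gaStep}^n(x,\cdot)$ already established, together with the control of the normalizing constant of $\pi$ from \Cref{lem:control_normalization_constant}.

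First, I would write $\pi(\rmd y) = Z_U^{-1}\rme^{-U(y)}\rmd y$ with $Z_U = \int_{\rset^d}\rme^{-U(y)}\rmd y$, so that the Radon–Nikodym derivative of $\delta_x Q_{\gaStep}^n$ with respect to $\pi$ at $y$ is $q_{\gaStep}^n(x,y)\, Z_U\, \rme^{U(y)}$. Since $\delta_x Q_{\gaStep}^n$ is a probability measure, its density integrates to $1$ against $\pi$, and therefore
\begin{equation*}
\VarDeux{\pi}{\densityPi{\delta_x Q_{\gaStep}^n}} \leq \int_{\rset^d}\parenthese{\densityPi{\delta_x Q_{\gaStep}^n}(y)}^2 \pi(\rmd y) = Z_U \int_{\rset^d} q_{\gaStep}^n(x,y)^2\,\rme^{U(y)}\,\rmd y.
\end{equation*}

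Next, I would square the pointwise upper bound on $q_{\gaStep}^n(x,y)$ proved in the previous lemma,
\begin{equation*}
q_{\gaStep}^n(x,y)^2 \leq \frac{\exp\!\parenthese{U(x) - U(y) - \sigma_{\gaStep,n}^{-1}\norm[2]{y-x}}}{\parenthese{2\pinumber\,\sigma_{\gaStep,n}\prod_{i=1}^n(1-L\gaStep_i)}^d},
\end{equation*}
multiply by $\rme^{U(y)}$ to cancel the $U(y)$ term, and integrate the remaining Gaussian factor in $y$, which yields a factor $(\pinumber\,\sigma_{\gaStep,n})^{d/2}$. Substituting $\sigma_{\gaStep,n} = 2\sum_i \gaStep_i/(1-L\gaStep_i)$ and simplifying, I expect to obtain
\begin{equation*}
\int_{\rset^d} q_{\gaStep}^n(x,y)^2\,\rme^{U(y)}\,\rmd y \leq \rme^{U(x)}\parenthese{8\pinumber \bigl\{\textstyle\prod_k(1-L\gaStep_k)\bigr\}^2 \sum_i \gaStep_i/(1-L\gaStep_i)}^{-d/2},
\end{equation*}
which is actually tighter than the claimed bound by a factor $2^{d/2}$.

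Finally, combining this with the estimate $Z_U \leq \vartheta_U$ given by \Cref{lem:control_normalization_constant} and trivially relaxing $8\pinumber$ to $4\pinumber$ in the denominator yields the stated inequality. No step looks truly difficult: the bookkeeping of powers of $2$, $\pinumber$, $\sigma_{\gaStep,n}$ and $\prod(1-L\gaStep_i)$ in the Gaussian integral is the only place where a careless slip could occur, so I would be extra careful there, but conceptually this is just Gaussian integration plus the two previously established pointwise bounds.
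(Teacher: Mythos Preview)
Your proposal is correct and follows exactly the route the paper intends: the corollary is stated without proof because it is the immediate combination of the pointwise bound on $q_{\gaStep}^n(x,y)$ with \Cref{lem:control_normalization_constant}, carried out precisely as you describe. Your observation that the computation actually yields $8\pinumber$ rather than $4\pinumber$ (hence a sharper inequality) is also correct; the stated bound is simply a slight relaxation.
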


\begin{proof}[Proof of \Cref{theo:convergence_TV_quantitatif_Poincare}]
We bound the two terms of the right hand side of \eqref{eq:eq_base}.
The first term is dealt with the same reasoning as for the proof of \Cref{theo:convergence_TV_non_quantitatif}. Regarding the second term, by  \cite[Theorem~1.4]{bakry:barthe:cattiaux:guillin:2008}, $\pi$ satisfies a Poincaré inequality with constant $\log^{-1}(\kappa)$. Then, the claimed bound follows from  \eqref{eq:borne-poincare-TV} and \Cref{lem:control_variance}.
\end{proof}

\subsection{Proof of \Cref{propo:drift-Euler}}
\label{sec:proof-drift-conv}

Set $\cVEa = \rhoUl/4$ and for all $x \in \rset^d$, $\phi(x) =(\norm[2]{x-\xstar}+1)^{1/2}$ . Since $\phi$ is $1$-Lipschitz, we have by the log-Sobolev inequality \cite[Theorem 5.5]{boucheron:lugosi:massart:2013}  for all $x \in \rset^d$,
\begin{equation}
  \label{eq:base-drift-convex}
 \RKer_{\gaStep} \VEa (x) \leq \rme^{\cVEa \RKer_{\gaStep} \phi(x) + \cVEa^2 \gaStep}  \leq \rme^{\cVEa \sqrt{\norm[2]{x-\gaStep \nabla \VUl(x) -\xstar} + 2 \gaStep d+1} + \cVEa^2 \gaStep}  \eqsp.
\end{equation}
Under \Cref{assum:regularity} since $U$ is convex and $\xstar$ is a minimizer of $\VUl$, \cite[Theorem 2.1.5 Equation (2.1.7)]{nesterov:2004} shows that  for all $x \in \rset^d$,
\begin{equation*}
  \ps{\nabla \VUl(x)}{x-\xstar}\geq (2\LUl)^{-1}\norm[2]{\nabla \VUl(x)} + \rhoUl\norm{x-\xstar}\1_{\{\norm{x-\xstar} \geq \RUl \}}  \eqsp,
\end{equation*}
which implies that  for all $x \in \rset^d$ and $\gaStep \in \ocint{0,\LUl^{-1}}$, we have
\begin{equation}
\label{eq:lem-convex-neste}
  \norm[2]{x-\gaStep \nabla \VUl(x) - \xstar} \leq \norm[2]{x-\xstar} -2\gaStep \rhoUl\norm{x-\xstar}\1_{\{\norm{x-\xstar} \geq \RUl \}} \eqsp.
\end{equation}
Using this inequality and for  all $u \in \ccint{0,1}$, $(1-u)^{1/2}-1 \leq -u/2$, we have for all $x \in \rset^d$, satisfying $\norm{x-\xstar} \geq \KUa = \max(1,2 d \rhoUl^{-1},\RUl)$,
\begin{align*}
&\parenthese{\norm[2]{x-\gaStep \nabla \VUl(x) -\xstar} + 2 \gaStep d +1}^{1/2} - \phi(x)\\
&\qquad \leq \phi(x) \defEns{\parenthese{1 - 2\gaStep \phi^{-2}(x)( \rhoUl \norm{x-\xstar}-d   ) }^{1/2}-1} \\
& \qquad   \leq -\gaStep \phi^{-1}(x)(\rhoUl \norm{x-\xstar}-d  )
 \leq -(\rhoUl \gaStep/2)\norm{x-\xstar} \phi^{-1}(x) \leq-  2^{-3/2}\rhoUl \gaStep  \eqsp.
\end{align*}
Combining this inequality and \eqref{eq:base-drift-convex}, we get for all $x \in \rset^d$, $\norm{x-\xstar} \geq \KUa$,
\begin{equation*}
  \RKer_{\gaStep} \VEa (x)/ \VEa (x) \leq \rme^{\gaStep\cVEa(   \cVEa -2^{-3/2} \rhoUl ) } = \thetaEa^{\gaStep} \eqsp.
\end{equation*}
 By \eqref{eq:lem-convex-neste} and  the inequality for all $a,b \geq 0$, $\sqrt{a+1+b}-\sqrt{1+b} \leq a/2$, we get for all $x \in \rset^d$,
\begin{equation*}
  \sqrt{\norm[2]{x-\gaStep \nabla \VUl(x) -\xstar} + 2 \gaStep d  +1} - \phi(x) \leq \gaStep d \eqsp.
\end{equation*}
Then using this inequality in \eqref{eq:base-drift-convex}, we have for all $x \in \rset^d$,
 \begin{equation*}
 \RKer_{\gaStep} \VEa (x) \leq \thetaEa^{\gaStep}  \VEa (x) +   \left( \rme^{\chi \gaStep(d+\chi) } - \thetaEa^{\gaStep} \right) \rme^{\rhoUl (\KUa^2+1)^{1/2}/4} \1_{\ball{\xstar}{\KUa}}(x) \eqsp.
 \end{equation*}
Using the inequality for all $t \geq 0$, $\rme^t -1 \leq t \rme^t$ concludes the proof.

\subsection{Proof of  \Cref{coro:convex_A_gamma}}
\label{sec:proof-crefth-stepsb_coro}
We preface the proof by a Lemma.
\begin{lemma}
\label{coro:W_c}
Assume \Cref{assum:regularity} and that $U$ is convex. Let $\sequence{\gaStep}[k][\nset^*]$ be a nonincreasing sequence with $\gaStep_1 \leq \LUl^{-1}$.
For all $n \geq 0$ and $x \in \rset^d$,
\begin{equation*}
\int_{\rset^d} \norm{y-\xstar }^2\QKer^n_{\gaStep}(x,\rmd y) \leq \defEns{4 \rhoUl^{-1} \parentheseDeux{ 1+\log \defEns{\FunfoD(\thetaEa,\bEa, \gaStep_1,\VEa(x))}}}^2\eqsp,
\end{equation*}
where $\VEa,\thetaEa,\bEa$ are given in \eqref{eq:def_Wc} and \Cref{propo:drift-Euler} respectively.
\end{lemma}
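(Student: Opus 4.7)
The plan is to combine the drift inequality for $\RKer_\gaStep$ established in \Cref{propo:drift-Euler} with the moment bound of \Cref{lem:bound_inho_moment} to control the Lyapunov function $\VEa$ along the chain, and then to convert this exponential moment bound into a second-moment bound on $\norm{y-\xstar}^2$ via a Markov-type tail argument. More precisely, I would first apply \Cref{propo:drift-Euler} (which gives $\RKer_\gaStep \VEa \leq \thetaEa^\gaStep \VEa + \gaStep \bEa$ for every $\gaStep \in \ocint{0,\bargaStep}$) and \Cref{lem:bound_inho_moment} to obtain, uniformly in $n \geq 0$,
\begin{equation*}
\QKer_\gaStep^n \VEa(x) \;\leq\; \FunfoD(\thetaEa,\bEa,\gaStep_1,\VEa(x)) \;=:\; G \eqsp,
\end{equation*}
which is the only place the drift is used. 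Note that $G \geq \VEa(x) \geq \rme^{\rhoU/4} > 1$, so $\log G > 0$.

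Next, I would exploit the explicit form of $\VEa$. Setting $c = \rhoU/4$, the elementary inequality $\sqrt{u^2+1}\geq u$ yields $\VEa(y) \geq \exp(c\norm{y-\xstar})$, so Markov's inequality gives the tail bound
\begin{equation*}
\QKer^n_\gaStep\!\bigl(x,\{y:\norm{y-\xstar}>t\}\bigr) \;\leq\; \min\!\bigl(1,\;G\rme^{-ct}\bigr) \quad \text{for every } t \geq 0\eqsp.
\end{equation*}
The key observation is that the threshold at which the exponential bound equals $1$ is $T := c^{-1}\log G = (4/\rhoU)\log G \geq 0$, which is precisely the quantity appearing in the target inequality.

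Finally, I would evaluate the second moment via the layer-cake identity, splitting at $T$:
\begin{equation*}
\int \norm{y-\xstar}^2\,\QKer^n_\gaStep(x,\rmd y)
= \int_0^\infty 2t\,\QKer^n_\gaStep(x,\{y:\norm{y-\xstar}>t\})\,\rmd t
\leq \int_0^T 2t\,\rmd t + G\!\int_T^\infty 2t\,\rme^{-ct}\,\rmd t\eqsp.
\end{equation*}
The first piece equals $T^2$, and the second piece is computed by the elementary primitive of $2t\rme^{-ct}$, yielding $2G\,(T+c^{-1})\,c^{-1}\rme^{-cT} = 2c^{-1}(T+c^{-1})$ because $G\rme^{-cT}=1$ by the choice of $T$. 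Collecting and substituting $T=c^{-1}\log G$ produces $(T+c^{-1})^2 = \{(4/\rhoU)(1+\log G)\}^2$, which is the claimed bound.

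The main obstacle is simply the bookkeeping in the tail integral: choosing the split point so that the boundary terms cancel neatly via $G\rme^{-cT}=1$, and absorbing the residual constants into the compact form $\{(4/\rhoU)(1+\log G)\}^2$. No delicate analysis is required once the two ingredients (moment control of $\VEa$ and the pointwise lower bound $\VEa(y)\geq\rme^{c\norm{y-\xstar}}$) are in hand; a sharper tail bound based on $\VEa(y)=\rme^{c\sqrt{\norm{y-\xstar}^2+1}}$ may be used instead to tighten the remainder constants if needed, via the substitution $u=c\sqrt{t^2+1}$ in the second integral.
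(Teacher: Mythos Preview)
Your approach via the layer-cake identity is sound and genuinely different from the paper's, but there is an arithmetic slip in the final step. With $c=\rhoU/4$ and $T=c^{-1}\log G$, your two pieces add up to
\[
T^2 + 2c^{-1}(T+c^{-1}) \;=\; (T+c^{-1})^2 + c^{-2},
\]
not $(T+c^{-1})^2$; so the bound you actually obtain exceeds the one stated in the lemma by an additive $(4/\rhoU)^2$. Your suggested fix of using the exact form $\VEa(y)=\exp\{c(\norm{y-\xstar}^2+1)^{1/2}\}$ in the tail integral (via $u=c\sqrt{t^2+1}$) reduces the surplus to $c^{-2}-1$ but does not remove it in general.

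The paper recovers the exact constant by a different route: it applies Jensen's inequality with the function $\phi(t)=\exp\{c(t+c^{-2})^{1/2}\}$, whose shift $c^{-2}$ is chosen precisely to make $\phi$ convex on $\rset_+$. This gives
\[
\phi\!\left(\int_{\rset^d}\norm{y-\xstar}^2\,\QKer_\gaStep^n(x,\rmd y)\right)
\;\leq\; \int_{\rset^d}\phi(\norm{y-\xstar}^2)\,\QKer_\gaStep^n(x,\rmd y)
\;\leq\; \rme\,\QKer_\gaStep^n\VEa(x)\;\leq\;\rme\,G,
\]
the middle inequality using $\phi(t)\leq \rme\exp\{c(t+1)^{1/2}\}$. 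Inverting $\phi$ (with $\phi^{-1}(s)=(c^{-1}\log s)^2-c^{-2}\leq (c^{-1}\log s)^2$) yields exactly $\{c^{-1}(1+\log G)\}^2$. The Jensen argument is sharper here because it uses the convexity directly on the second moment, whereas your tail reduction only exploits a first-moment exponential bound; your method would suffice for all downstream uses of the lemma, but does not establish it exactly as stated.
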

\begin{proof}

 Let $n \geq 0$ and $x \in \rset^d$. Consider the function $\phi
  :\rset \to \rset$ defined by for all $t \in \rset$, $\phi(t) =
  \exp\defEns{(\rhoU/4)(t+(4/\rhoU)^2)^{1/2}}$. Since this function is
  convex on $\rset_+$, we have by the Jensen inequality and the
  inequality for all $t \geq 0$, $\phi(t) \leq
  \rme^{1+(\rhoU/4)(t+1)^{1/2}}$,
\begin{equation*}
\phi\parenthese{\int_{\rset^d} \norm{y-\xstar }^2\QKer^n_{\gaStep}(x,\rmd y)}
\leq \rme^{1} \QKer^n_{\gaStep}\VEa(x) \\
\eqsp.
\end{equation*}
The proof is then completed using \Cref{propo:drift-Euler}, \Cref{lem:bound_inho_moment} and that $\phi$ is one-to-one with for all $t \geq 1 $, $ \phi^{-1}(t) \leq  \parenthese{4 \rhoU^{-1} \log(t)}^2$.
\end{proof}
\begin{proof}[Proof of \Cref{coro:convex_A_gamma}]
Using $\nabla \VUl(\xstar) = 0$,
  \Cref{assum:regularity} and \Cref{coro:W_c}, we have for all $k \geq
  0$,
\begin{equation*}
  \int_{\rset^d}\norm[2]{\nabla \VUl(y)} \QKer^k_{\gaStep}(x,\rmd y) \leq \LU^2 \parenthese{4 \rhoUl^{-1} \defEns{ 1+\log \defEns{\FunfoD(\thetaEa,\bEa,\gaStep_1,\VEa(x))}}}^2 \eqsp.
\end{equation*}
\end{proof}
\subsection{Proof of \Cref{theo:convergence_TV_dec-stepsize-Bob}}
\label{sec:proof-crefth-stepsb}

We preface the proof by a Lemma.

\begin{lemma}
\label{lem:control_normalization_constantb}
Assume \Cref{assum:regularity} and that $U$ is convex. Then
\begin{equation}
\label{eq:def_theta_Ub}
\int_{\rset^d} \rme^{-U(y)} \rmd y  \leq   \parenthese{\frac{(2\pinumber)^{(d+1)/2}(d-1)!}{\rhoU^d \Gammabf((d+1)/2)} +
   \frac{\pinumber^{d/2} \RUl^{d}}{\Gammabf(d/2+1)}} \eqsp.
\end{equation}
      \end{lemma}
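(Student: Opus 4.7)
The plan is to split the integral over $\rset^d$ into the ball $\ball{\xstar}{\RUl}$ and its complement, and bound each piece separately. This splitting is natural because the linear growth estimate $\VUl(y)-\VUl(\xstar) \geq \rhoUl \norm{y-\xstar}$ from \Cref{assum:convex} only holds outside $\ball{\xstar}{\RUl}$, whereas inside the ball one can only rely on the fact that $\xstar$ is a global minimizer.

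On $\ball{\xstar}{\RUl}$, using $U(\xstar)=0$ (assumed throughout in the convex section) and convexity, one has $U(y) \geq 0$ for all $y$, so $\rme^{-U(y)} \leq 1$, and the contribution is bounded by the volume $\pinumber^{d/2} \RUl^d / \Gammabf(d/2+1)$ of the Euclidean ball of radius $\RUl$. This gives the second term of the announced bound.

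On the complement, apply $\rme^{-U(y)} \leq \rme^{-\rhoUl \norm{y-\xstar}}$ and pass to spherical coordinates centered at $\xstar$:
\begin{equation*}
\int_{\rset^d \setminus \ball{\xstar}{\RUl}} \rme^{-\rhoUl \norm{y-\xstar}} \rmd y \;\leq\; \frac{2\pinumber^{d/2}}{\Gammabf(d/2)} \int_0^{\plusinfty} \rme^{-\rhoUl t}\, t^{d-1}\, \rmd t \;=\; \frac{2\pinumber^{d/2}}{\Gammabf(d/2)} \cdot \frac{(d-1)!}{\rhoUl^d}.
\end{equation*}
To match the form in the statement, use the Legendre duplication formula $\Gammabf(d/2)\Gammabf((d+1)/2) = \sqrt{\pinumber}\, 2^{1-d}(d-1)!$ (or the elementary inequality $\Gammabf((d+1)/2) \leq 2^{(d-1)/2}\sqrt{\pinumber}\,\Gammabf(d/2)$, easily checked by induction/Gautschi) to replace $2\pinumber^{d/2}/\Gammabf(d/2)$ by the slightly looser $(2\pinumber)^{(d+1)/2}/\Gammabf((d+1)/2)$, producing the first term.

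The main (minor) obstacle is reconciling the exact surface area of the unit sphere, $2\pinumber^{d/2}/\Gammabf(d/2)$, with the expression $(2\pinumber)^{(d+1)/2}/\Gammabf((d+1)/2)$ used in the statement; this is a purely computational upper-bound step identical to the one implicitly invoked in \Cref{lem:control_normalization_constant}, and involves no probabilistic ideas beyond the convex-case estimates already established.
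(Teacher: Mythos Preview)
Your proof is correct and essentially identical to the paper's: the paper writes the pointwise bound $\rme^{-U(y)} \leq \rme^{-\rhoUl\norm{y-\xstar}} + \1_{\{\norm{y-\xstar}\leq \RUl\}}$ (which is exactly your two cases combined) and then says ``the proof is concluded using the spherical coordinates,'' absorbing the constant-matching step you spell out into that one phrase. Your explicit verification that $2\pinumber^{d/2}/\Gammabf(d/2) \leq (2\pinumber)^{(d+1)/2}/\Gammabf((d+1)/2)$ via the duplication formula is a welcome addition, since the paper leaves this implicit here and in \Cref{lem:control_normalization_constant}.
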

\begin{proof}
By \eqref{eq:superexpo_potential}  and $U(\xstar) = 0$, we have
\[
\int_{\rset^d} \rme^{-U(y)} \rmd y \leq \int_{\rset^d} \rme^{-
   \rhoU \norm{y-\xstar}} \rmd y+  \int_{\rset^d} \1_{\{ \norm{y-\xstar} \leq   \RUl\}} \rmd y \eqsp.
\]
Then the proof is concluded using the spherical coordinates.
  \end{proof}

\begin{proof}[Proof of \Cref{theo:convergence_TV_dec-stepsize-Bob}]
  By \cite[Theorem 1.2]{bobkov:1999}, $\pi$ satisfies a Poincaré
  inequality with constant $\log^{-1}(\kappa)$. Therefore, the second
  term in \eqref{eq:eq_base} is dealt as in the proof of
  \Cref{theo:convergence_TV_quantitatif_Poincare} using
  \eqref{eq:borne-poincare-TV},
  \Cref{lem:control_normalization_constantb} and
  \Cref{lem:control_normalization_constant}. 
\end{proof}

\subsection{Proof of \Cref{propo:drift_exp_conv}}
\label{proof:prop:drift_exp_conv}
For all $x \in \rset^d$, we have
\begin{multline*}
\generatorL \VEa(x) =  \frac{\rhoU \VEa(x)}{4(\norm{x-\xstar}^2+1)^{1/2} }   \left \lbrace   (\rhoU/4)(\norm{x-\xstar}^2+1)^{-1/2} \norm[2]{x-\xstar} \right.\\
\left. -\ps{\nabla U(x)}{x-\xstar} - (\norm{x-\xstar}^2+1)^{-1} \norm[2]{x-\xstar} + d \right \rbrace \eqsp.
\end{multline*}
By \eqref{eq:superexpo_potential}, $\ps{\nabla U(x)}{x-\xstar} \geq \rhoU\norm{x-\xstar}$ for all $x \in \rset^d$,
$\norm{x-\xstar} \geq \RUl$.  Then, for all $x$, $\norm{x-\xstar}
\geq \KOne = \max(\RUl,4d/ \rhoU,1)$, $\generatorL \VEa(x) \leq
-(\rhoU^2/8) \VEa(x)$. In addition, since $U$ is convex and $\nabla U(\xstar)=0$, for all $x \in \rset^d$, $\ps{\nabla U(x)}{x-\xstar} \geq 0$ and we get $\sup_{\{x \in \Setdrift\}} \generatorL
\VEa(x) \leq \bOne$.

\subsection{Proof of \Cref{propo:drift_R_gamma_Stconv}}
\label{sec:proof_drift_strong_conv}
Under \Cref{assum:regularity}, using that $\nabla \VUl(\xstar) = 0$, we get for all $x \in \rset^d$,
 \begin{multline}
\label{eq:15}
  \int_{\rset^d }\norm[2]{y-\xstar}   \RKer_{\gaStep}(x,\rmd y) = \norm[2]{x-\xstar+\gaStep (\nabla \VUl(\xstar)-\nabla \VUl(x)) } + 2 \gaStep d \\  \leq (1+(\LF \gaStep)^{2}) \norm[2]{x-\xstar} - 2 \gaStep \ps{\nabla \VUl(x) - \nabla \VUl(\xstar) }{x-\xstar} + 2 \gaStep d \eqsp.
\end{multline}
Then for all $x \in \rset^d$, $\norm{x-\xstar} \geq \RSt$, we get using for all $t \geq 0$, $1-t \leq \rme^{-t}$
\begin{equation*}
    \int_{\rset^d }\norm[2]{y-\xstar}   \RKer_{\gaStep}(x,\rmd y)  \leq \thetaESt^{\gaStep} \norm[2]{x-\xstar} + 2 \gaStep d \eqsp.
\end{equation*}
Using again \eqref{eq:15} and the convexity of $U$, it yields for all $x \in \rset^d$, $\norm{x-\xstar} \leq \RSt$,
\begin{equation*}
   \int_{\rset^d }\norm[2]{y-\xstar}   \RKer_{\gaStep}(x,\rmd y)  \leq \gaStep \bESt \eqsp,
\end{equation*}
which concludes the proof.

\subsection{Proof of \Cref{theo:kind_drift}}
\label{proof:theo:kind_drift}
We preface the proof by a lemma.

\begin{lemma}
  \label{lem:pert_norm}
Assume \Cref{assum:pertSt}. Then, for all $x \in \rset^d$,
\begin{equation*}
   \norm{x-\gaStep \nabla U(x) -\xstarun}^2 \leq (1-\kappaS\gaStep/2)\norm{x -\xstarun}^2+\gaStep(\gaStep+2\kappaS^{-1}) \normsup[2]{\nabla \Ude}
\eqsp.
\end{equation*}
\end{lemma}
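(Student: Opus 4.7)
The plan is to expand the square, split $\nabla U = \nabla \Uun + \nabla \Ude$, exploit Nesterov's co-coercivity inequality on the strongly convex smooth part $\Uun$, and then absorb the bounded perturbation $\nabla \Ude$ with Young's inequality. Introduce the intermediate quantity $w = x - \gaStep \nabla \Uun(x) - \xstarun$, so that $x - \gaStep \nabla U(x) - \xstarun = w - \gaStep \nabla \Ude(x)$.

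For the first step, expanding and using $\nabla \Uun(\xstarun)=0$ gives
\begin{equation*}
\norm{w}^2 = \norm{x-\xstarun}^2 - 2\gaStep \ps{\nabla \Uun(x)}{x-\xstarun} + \gaStep^2 \norm{\nabla \Uun(x)}^2 \eqsp.
\end{equation*}
Since $\Uun$ is $\mStD$-strongly convex and $\Lun$-smooth, Nesterov's inequality (\cite[Theorem 2.1.12]{nesterov:2004}) applied between $x$ and $\xstarun$ yields
\begin{equation*}
\ps{\nabla \Uun(x)}{x-\xstarun} \geq \frac{\mStD \Lun}{\mStD+\Lun}\norm{x-\xstarun}^2 + \frac{1}{\mStD+\Lun} \norm{\nabla \Uun(x)}^2 \eqsp.
\end{equation*}
Recognising $\kappaS/2 = \mStD \Lun /(\mStD+\Lun)$, this gives
\begin{equation*}
\norm{w}^2 \leq (1-\gaStep \kappaS)\norm{x-\xstarun}^2 - \gaStep \parenthese{\frac{2}{\mStD+\Lun} - \gaStep } \norm{\nabla \Uun(x)}^2 \eqsp.
\end{equation*}
Under the standing assumption $\gaStep \leq \gaStep_1 \leq 2/(\mStD+\Lun)$ the coefficient of $\norm{\nabla \Uun(x)}^2$ is non-positive, hence $\norm{w}^2 \leq (1-\gaStep \kappaS)\norm{x-\xstarun}^2$.

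For the second step, I will apply the elementary bound $\norm{a-b}^2 \leq (1+\epsilon)\norm{a}^2 + (1+\epsilon^{-1})\norm{b}^2$ (valid for any $\epsilon>0$ by Young's inequality) with $a=w$, $b=\gaStep \nabla \Ude(x)$ and the choice $\epsilon = \gaStep \kappaS /2$. This yields
\begin{equation*}
\norm{w - \gaStep \nabla \Ude(x)}^2 \leq (1 + \gaStep \kappaS /2)\norm{w}^2 + \gaStep(\gaStep + 2\kappaS^{-1}) \normsup[2]{\nabla \Ude} \eqsp.
\end{equation*}
Combining with the bound on $\norm{w}^2$ above and using $(1+\gaStep \kappaS /2)(1-\gaStep \kappaS) = 1 - \gaStep \kappaS /2 - \gaStep^2 \kappaS^2/2 \leq 1- \gaStep \kappaS /2$ concludes the proof.

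The only non-routine ingredient is the co-coercivity/strong-convexity inequality from Nesterov, which is precisely what generates the extra $\norm{\nabla \Uun(x)}^2$ needed to cancel the quadratic term arising from squaring the gradient step; the tuning of $\epsilon$ in Young's inequality is then forced by the requirement that the coefficient of $\norm{x-\xstarun}^2$ end up equal to $1-\gaStep \kappaS /2$.
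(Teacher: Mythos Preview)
Your proof is correct and follows essentially the same approach as the paper: the paper applies Young's inequality (with the same parameter $\epsilon=\gaStep\kappaS/2$) first and then invokes the Nesterov co-coercivity inequality \cite[Theorems 2.1.9--2.1.12]{nesterov:2004} to bound $\norm{x-\gaStep\nabla\Uun(x)-\xstarun}^2$, whereas you reverse the order of these two steps; the ingredients and the final combination $(1+\gaStep\kappaS/2)(1-\gaStep\kappaS)\leq 1-\gaStep\kappaS/2$ are identical. Your explicit mention of the step-size restriction $\gaStep\leq 2/(\mStD+\Lun)$ is appropriate, as both arguments need it to drop the $\norm{\nabla\Uun(x)}^2$ term.
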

\begin{proof}
Using that for all $y,z \in \rset^d$, $\norm[2]{y+z} \leq (1+\kappaS\gaStep/2)\norm[2]{y}+(1+2(\kappaS\gaStep)^{-1})\norm[2]{z}$, we get under \Cref{assum:pertSt}-\ref{assum:pertSt-b}:
\begin{multline}
  \label{eq:lem:pert_norm}
  \norm{x-\gaStep \nabla U(x) -\xstarun}^2 \leq (1+\kappaS\gaStep/2)\norm{x-\gaStep \nabla \Uun(x) -\xstarun}^2\\
+\gaStep(\gaStep+2\kappaS^{-1}) \normsup[2]{\nabla \Ude} \eqsp.
\end{multline}
By \cite[Theorem 2.1.12, Theorem 2.1.9]{nesterov:2004}, \Cref{assum:pertSt}-\ref{assum:pertSt-b} implies that for all $x,y \in \rset^d$:
\begin{equation*}
\ps{\nabla \Uun(y) - \nabla \Uun(x)}{y-x} \geq (\kappaS/2)\norm[2]{y-x} + \frac{1}{\mStD+\Lun} \norm[2]{\nabla \Uun(y) - \nabla \Uun(x)} \eqsp,  \\
\end{equation*}
Using this inequality and  $\nabla \Uun (\xstarun ) = 0$ in \eqref{eq:lem:pert_norm} concludes the proof.
\end{proof}

\begin{proof}[Proof of \Cref{theo:kind_drift}]
For any $\gaStep \in \ooint{0,2/(m+\Lun)}$, we have for all $x \in \rset^d$:
\begin{multline*}
\int_{\rset^d} \norm[2]{y-\xstarun}   R_\gaStep (x,\rmd y) =  \norm{x-\gaStep \nabla U(x) -\xstarun}^2 + 2 \gaStep d \\
\phantom{aaa}\leq (1-\kappaS\gaStep/2)\norm{x -\xstarun}^2+\gaStep\defEns{(\gaStep+2\kappaS^{-1}) \normsup[2]{\nabla \Ude}+2d} \eqsp,
\end{multline*}
where we have used \Cref{lem:pert_norm} for the last inequality.
Since $\gaStep_1 \leq 2/(m+\Lun)$ and $(\gaStep_k)_{k\geq 1}$ is nonincreasing, by a straightforward induction,  for $p \geq 1$ and $x \in \rset^d$,
\begin{multline}
\label{eq:lem_kind_drift_first_estimate}
\int_{\rset^d} \norm{y-\xstarun}^2   Q^{p}_\gaStep(x,\rmd y) \leq \prod_{k=1}^p (1-\kappaS \gaStep_k/2)\norm[2]{x-\xstarun}\\
 + ((\gaStep_1+2\kappaS^{-1}) \normsup[2]{\nabla \Ude} +2d)\sum_{i=n}^p \prod_{k=i+1}^p (1-\kappaS \gaStep_k/2) \gaStep_i\eqsp.
\end{multline}
Consider the second term in the right hand side of  \eqref{eq:lem_kind_drift_first_estimate}. Since $\gaStep_1 \leq 2/(m+\Lun)$, $ m \leq \Lun$ and $(\gaStep_k)_{k \geq 1}$ is nonincreasing, $\max_{k \geq 1} \gaStep_k \leq \kappaS^{-1}$ and therefore:
\begin{multline*}
\sum_{i=n}^p \prod_{k=i+1}^p (1-\kappaS \gaStep_k/2) \gaStep_i
\\
\leq \kappaS^{-1}  \sum_{i=n}^p \defEns{\prod_{k=i+1}^p \left( 1-\kappaS \gaStep_k/2  \right) -\prod_{k=i}^p \left( 1-\kappaS \gaStep_k /2\right) }  \leq 2 \kappaS^{-1} \eqsp.
\end{multline*}
\end{proof}

\subsection{Proof of \Cref{theo:convergence_TV_quantitatif_logSob}}
\label{sec:theo:convergence_TV_quantitatif_logSob}
We preface the proof of the Theorem by a preliminary lemma.
\begin{lemma}
\label{lem:control_entropie}
Assume \Cref{assum:pertSt}.
  Let $\gaStep \in \ooint{0,2/(m+\Lun)}$, then for all $x \in \rset^d$,
 \begin{multline*}
 \Ent{\pi}{\densityPi{\delta_x R_\gaStep}} \leq (\Lun/2)\defEns{ (1-\kappaS\gaStep/2)\norm{x -\xstarun}^2+\gaStep(\gaStep+2\kappaS^{-1}) \normsup[2]{\nabla \Ude}} \\+\osc[\rset^d]{\Ude} -(d/2)(1+\log(2 \gaStep m) -2\Lun\gaStep ) \eqsp.
 \end{multline*}
\end{lemma}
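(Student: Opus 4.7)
The plan is to exploit the explicit Gaussian form of the transition density $r_\gamma(x,y)=(4\uppi\gamma)^{-d/2}\exp(-\|y-x+\gamma\nabla U(x)\|^2/(4\gamma))$ of $\RKer_\gamma$ together with $\pi\propto\rme^{-U}$. Writing $Z=\int\rme^{-U(y)}\rmd y$, the Radon--Nikodym derivative of $\delta_x\RKer_\gamma$ with respect to $\pi$ is $y\mapsto Z\,\rme^{U(y)}r_\gamma(x,y)$, so a direct computation yields
\begin{equation*}
\Ent{\pi}{\densityPi{\delta_x\RKer_\gamma}} = \PE[\log r_\gamma(x,Y)] + \PE[U(Y)] + \log Z,
\end{equation*}
where $Y = x-\gamma\nabla U(x)+\sqrt{2\gamma}\,\xi$ with $\xi\sim\mathcal{N}(0,\IdM)$. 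I would bound each of the three terms separately.

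The first term is an exact Gaussian computation: $\|Y-x+\gamma\nabla U(x)\|^2/(4\gamma)=\|\xi\|^2/2$ has expectation $d/2$, giving $\PE[\log r_\gamma(x,Y)] = -(d/2)(1+\log(4\uppi\gamma))$. For the partition function, I would combine $\Ude(y)\geq\inf \Ude$ with the $\constSt$-strong convexity of $\Uun$ and $\nabla \Uun(\xunstar)=0$, which yields $\Uun(y)\geq \Uun(\xunstar)+(\constSt/2)\|y-\xunstar\|^2$, and hence via the standard Gaussian integral
\begin{equation*}
\log Z \leq -\inf \Ude - \Uun(\xunstar)+(d/2)\log(2\uppi/\constSt).
\end{equation*}

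For the middle term, the Lipschitz-gradient upper quadratic bound applied to $\Uun$ at $\xunstar$ gives $\Uun(y)\leq \Uun(\xunstar)+(\Lun/2)\|y-\xunstar\|^2$, while $\Ude\leq \sup \Ude$. Using $\PE[\|Y-\xunstar\|^2]=\|x-\gamma\nabla U(x)-\xunstar\|^2+2\gamma d$ and then applying \Cref{lem:pert_norm} to the first summand, I obtain
\begin{equation*}
\PE[U(Y)] \leq \Uun(\xunstar) + \sup \Ude + (\Lun/2)\bigl\{(1-\kappaS\gamma/2)\|x-\xunstar\|^2 + \gamma(\gamma+2\kappaS^{-1})\normsup[2]{\nabla\Ude} + 2\gamma d\bigr\}.
\end{equation*}

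Adding the three pieces, the terms $\Uun(\xunstar)$ cancel, $\sup \Ude - \inf \Ude$ collapses to $\osc[\rset^d]{\Ude}$, and the remaining dimensional constants combine via $-\log(4\uppi\gamma)+\log(2\uppi/\constSt)=-\log(2\constSt\gamma)$ together with the contribution $(\Lun/2)\cdot 2\gamma d=\Lun\gamma d$ to produce exactly $-(d/2)(1+\log(2\gamma \constSt)-2\Lun\gamma)$, matching the claimed bound. The only delicate point is the bookkeeping of dimensional constants so that the Gaussian normalization of $r_\gamma$, the Gaussian upper bound on $Z$, and the $\Lun\gamma d$ coming from the quadratic upper bound for $\Uun$ telescope into the single clean logarithm $\log(2\gamma \constSt)$; the rest amounts to two one-line convexity estimates and one invocation of \Cref{lem:pert_norm}.
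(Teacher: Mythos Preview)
Your proposal is correct and follows essentially the same approach as the paper: both write the entropy as $\int r_\gamma(x,y)\log(r_\gamma(x,y)/\pi(y))\,\rmd y$, evaluate the Gaussian part exactly to get $-(d/2)(1+\log(4\uppi\gamma))$, bound $U(Y)$ above via the Lipschitz-gradient quadratic bound for $U_1$ and $\sup U_2$, bound $\log Z$ via the strong-convexity quadratic lower bound for $U_1$ and $\inf U_2$, and invoke \Cref{lem:pert_norm} for $\|x-\gamma\nabla U(x)-\xunstar\|^2$. Your write-up is in fact slightly more explicit than the paper's in handling the normalizing constant and in combining the dimensional terms into $-(d/2)(1+\log(2\gamma\constSt)-2\Lun\gamma)$.
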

\begin{proof}
Let $\gaStep \in \ooint{0,2/(m+\Lun)}$ and $r_\gaStep$ be the transition density of $R_\gaStep$ given by \eqref{eq:definition_r_gaStep}. Under \Cref{assum:pertSt}-\ref{assum:pertSt-a} by \cite[Theorems 2.1.8-2.1.9]{nesterov:2004}, we have for all $x \in \rset^d$, $\Uun(x) \leq \Uun(\xstarun) + (\Lun/2)\norm[2]{x-\xstarun}$
Therefore we have for all $x \in \rset^d$
\begin{multline}
  \label{eq:controle_entropie_1}
  \Ent{\pi}{\densityPi{\delta_x R_\gaStep}} = \int_{\rset^d} \log(r_\gaStep(x,y)/\pi(y)) r_\gaStep(x,y) \rmd x \\
\leq  R_{\gaStep} \funtestcontrolEntropie(x) - (d/2)(1+\log(4\pinumber \gaStep) ) \eqsp,
\end{multline}
where $\funtestcontrolEntropie: \rset^d \to \rset$ is the function defined for all $y \in \rset^d$ by
\begin{equation*}
  \funtestcontrolEntropie(y) = \Ude(y) + \Uun(\xstarun) + (\Lun/2)\norm[2]{y - \xstarun} + \log\parenthese{\int_{\rset^d} \rme^{-U(z)} \rmd z} \eqsp.
\end{equation*}
By \Cref{assum:pertSt}-\ref{assum:pertSt-b} and \Cref{lem:pert_norm}, we get for all $x \in \rset^d$:
\begin{align*}
  \nonumber
R_{\gaStep} \funtestcontrolEntropie(x) &\leq  (\Lun/2)\norm[2]{x-\gaStep \nabla U(x) - \xstarun}+ \log\parenthese{\int_{\rset^d} \rme^{-\Uun(z)+\Uun(\xstarun)} \rmd z}\\
\nonumber & \phantom{(\Lun/2)\norm[2]{x-\gaStep \nabla U(x) - \xstarun}}+\osc[\rset^d]{\Ude} + d\Lun\gaStep  \\
\nonumber
& \leq   (\Lun/2)\defEns{ (1-\kappaS\gaStep/2)\norm{x -\xstarun}^2+\gaStep(\gaStep+2\kappaS^{-1}) \normsup[2]{\nabla \Ude}}\\
\nonumber &  \phantom{(\Lun/2)\norm[2]{x-\gaStep \nabla U(x) - \xstarun}} +\osc[\rset^d]{\Ude} + d\Lun\gaStep \eqsp.
\end{align*}
Plugging this bound in \eqref{eq:controle_entropie_1} gives the desired result.
\end{proof}
\begin{proof}[Proof of \Cref{theo:convergence_TV_quantitatif_logSob}]
  We first deal with the second term in the right hand side of
  \eqref{eq:eq_base}. Under \Cref{assum:pertSt}, \cite[Corollary
  5.7.2]{bakry:gentil:ledoux:2014} and the Holley-Stroock perturbation
  principle \cite[p. 1184]{holley:stroock:1987} show that  $\pi$ satisfies a
  log-Sobolev inequality with constant $\Clogsob =
  -\log^{-1}(\kappa)$. So by \eqref{eq:convergence_log_sob} we have
\begin{equation*}
  \tvnorm{\delta_x \QKer_{\gaStep}^n \PL_t - \pi} \leq \kappa^t \defEns{2\, \Ent{\pi}{\densityPi{\delta_x \QKer_{\gaStep}^n}}}^{1/2}  \eqsp.
\end{equation*}
We now bound $\Ent{\pi}{\densityPiLigne{\delta_x \QKer_{\gaStep}^n}}$
  which will imply the upper bound of $C(\delta_x \QKer_{\gaStep}^n)$.
We proceed by induction. For $n =1$, it is \Cref{lem:control_entropie}. For $n \geq 2$, by \eqref{eq:definition_q_gaStep} and the Jensen inequality applied to the convex function $t \mapsto t \log(t)$, we have for all $x \in \rset^d$ and $ n \geq 1$,
\begin{align}
\nonumber
&\Ent{\pi}{\densityPiLigne{\delta_x Q^n_\gaStep}} \\
\nonumber
&= \int_{\rset^d} \log\defEns{\pi^{-1}(y) \int_{\rset^d} q^{n-1}_\gaStep (x,z) r_{\gaStep_n}(z,y) \rmd z } \int_{\rset^d} q^{n-1}_\gaStep (x,z) r_{\gaStep_n}(z,y) \rmd z  \rmd y \\
\label{eq:control_entro_1}
& \leq \int_{\rset^d} \int_{\rset^d} \log\defEns{ r_{\gaStep_n}(z,y) \pi^{-1}(y)}  q^{n-1}_\gaStep (x,z) r_{\gaStep_n}(z,y) \rmd z  \rmd y \eqsp.
\end{align}
Using Fubini's theorem, \Cref{lem:control_entropie}, \Cref{theo:kind_drift}, and the inequality $t\geq 0$, $1-t \leq \rme^{-t}$ in \eqref{eq:control_entro_1} implies the bound of $C(\delta_x \QKer_{\gaStep}^n) $.

Finally,  $A(\gaStep,x)$ is bounded using the inequality for all $y,z \in \rset^d$, $\norm[2]{y+z} \leq 2(\norm[2]{y}+\norm[2]{z})$, \Cref{assum:pertSt} and \Cref{theo:kind_drift}.
\end{proof}


\section{Quantitative convergence bounds in total variation for diffusions}
\sectionmark{Quantitative convergence bounds  for diffusions}
\label{sec:quant-conv-bounds}
In this part, we derived  quantitative convergence results in
total variation norm for $d$-dimensional SDEs of the form
\begin{equation}
  \label{eq:SDE_basics}
  \rmd \XSDE_t = b(\XSDE_t) \rmd t +  \rmd B_t^d \eqsp,
\end{equation}
started at $\XSDE_0$, where $(B_t^d)_{t \geq 0}$ is a $d$-dimensional
standard Brownian motion and $b:\rset^d \to \rset^d$ satisfies the following assumptions.
\begin{assumptionG}
  \label{assum:drift}
   $b$ is Lipschitz and  for all $x,y \in \rset^d$, $
  \ps{b(x) - b(y)}{x-y} \leq 0$.
\end{assumptionG}
Under \Cref{assum:drift}, \cite[Theorems 2.4-3.1-6.1, Chapter
IV]{ikeda:watanabe:1989} imply that there exists a unique solution
$(\XSDE_t)_{t \geq 0}$ to \eqref{eq:SDE_basics} for all initial point
$x \in \rset^d$, which is strongly Markovian. Denote by $(\PSDE_t)_{ t
  \geq 0}$ the transition semigroup associated with
\eqref{eq:SDE_basics}.  To derive explicit bound for
$\tvnorm{\PSDE_t(x,\cdot) - \PSDE_t(y,\cdot)}$, we use the coupling by
reflection, introduced in \cite{lindvall:rogers:1986} to show convergence in total variation
norm for solution of SDE, and recently used by \cite{eberle:2015} to
obtain exponential convergence in the Wasserstein distance of order
$1$. This coupling
is defined as (see \cite[Example 3.7]{chen:shao:1989}) the unique
strong Markovian process $(\XSDE_t,\YSDE_t)_{ t \geq 0}$ on
$\rset^{2d}$, solving the SDE:
\begin{equation}
  \label{eq:def_couplage_par_reflection}
\begin{cases}
  \rmd \XSDE_t &= b(\XSDE_t) \rmd t + \rmd B_t^d \\
 \rmd \YSDE_t &= b(\YSDE_t) \rmd t +  (\Id - 2 e_t e_t^T)
  \rmd B_t^d   \eqsp,
\end{cases}
\quad \text{ where } e_t =\fune(\XSDE_t -\YSDE_t)
\end{equation}
with  $\fune(z) = z/\norm{z}$ for $z \not=0$ and $\fune(0) = 0$ otherwise.
Define the coupling time
\begin{equation}
  \label{eq:8}
\tau_c =  \inf \{s \geq 0 \ | \ \XSDE_s \not = \YSDE_s \} \eqsp.
\end{equation}
By construction $\XSDE_t = \YSDE_t$ for $t\geq \tau_c$. We denote in the sequel by
$\tilde{\mathbb{P}}_{(x,y)}$ and $\tilde{\mathbb{E}}_{(x,y)}$ the
probability and the expectation associated with the SDE \eqref{eq:def_couplage_par_reflection} started
at $(x,y) \in \rset^{2d}$ on the canonical space of continuous
function from $\rset_+$ to $\rset^{2d}$. We denote by $(\filtrationTilde_t)_{t
  \geq 0}$ the canonical filtration. Since $\barB_t^d = \int_0^t (\Id
- 2 e_s e_s^T) \rmd B_s^d$ is a $d$-dimensional Brownian motion, the
marginal processes $(\XSDE_t)_{t \geq 0}$ and $(\YSDE_t)_{t \geq 0}$
are under $\tilde{\mathbb{P}}_{(x,y)}$ weak solutions to \eqref{eq:SDE_basics} started at $x$ and $y$
respectively.   The results in \cite{lindvall:rogers:1986} are derived under less
stringent conditions than \Cref{assum:drift}, but do not provide quantitative estimates.
\begin{proposition}[\protect{\cite[Example 5]{lindvall:rogers:1986}}]
  \label{propo:reflection_coupling_SDE}
  Assume \Cref{assum:drift} and let $(\XSDE_t,\YSDE_t)_{t \geq 0}$ be the solution of \eqref{eq:def_couplage_par_reflection}. Then for all $t \geq 0$ and $x,y \in \rset^d$, we have
  \begin{equation*}
     \PPMb{(x,y)}{\tau_c > t } = \PPMb{(x,y)}{\XSDE_t \not = \YSDE_t} \leq 2\left( \bfPhi\defEns{ \left(2t^{1/2}\right)^{-1}\norm{x-y}} -1/2 \right) \eqsp.
  \end{equation*}
\end{proposition}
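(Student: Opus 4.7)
The plan is to reduce everything to the one-dimensional radial process $r_t = \norm{\XSDE_t - \YSDE_t}$ on $\coint{0,\tau_c}$, show that it is dominated by a driftless one-dimensional Brownian motion, and then invoke the reflection principle.

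First I would set $\ZSDE_t = \XSDE_t - \YSDE_t$ so that, from \eqref{eq:def_couplage_par_reflection},
\begin{equation*}
\rmd \ZSDE_t = \bigl(b(\XSDE_t) - b(\YSDE_t)\bigr)\rmd t + 2 e_t e_t^{T} \rmd B_t^d \eqsp, \qquad t < \tau_c \eqsp.
\end{equation*}
The process $W_t = \int_0^t e_s^{T} \rmd B_s^d$ has quadratic variation $t$, so by Lévy's characterization it is a standard one-dimensional Brownian motion on $\coint{0,\tau_c}$. Applying Itô's formula to $r_t = \norm{\ZSDE_t}$ on $\coint{0,\tau_c}$ (where $\ZSDE_t \neq 0$), using $\nabla\norm{\cdot}(z) = z/\norm{z}$ and $\nabla^2 \norm{\cdot}(z) = (\Id - zz^{T}/\norm{z}^{2})/\norm{z}$, the Itô correction is proportional to $\mathrm{tr}\bigl((\Id - e_t e_t^{T}) e_t e_t^{T}\bigr) = 0$, and the martingale part collapses to $2\, \rmd W_t$ since $e_t^{T} \cdot (2 e_t e_t^{T}) = 2 e_t^{T}$. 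Therefore, on $\coint{0,\tau_c}$,
\begin{equation*}
\rmd r_t = \ps{e_t}{b(\XSDE_t) - b(\YSDE_t)}\rmd t + 2\, \rmd W_t \eqsp.
\end{equation*}

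The next step uses \Cref{assum:drift}: since $e_t = (\XSDE_t-\YSDE_t)/r_t$,
\begin{equation*}
\ps{e_t}{b(\XSDE_t) - b(\YSDE_t)} = r_t^{-1}\ps{\XSDE_t-\YSDE_t}{b(\XSDE_t)-b(\YSDE_t)} \leq 0 \eqsp,
\end{equation*}
so the drift in the SDE for $r_t$ is nonpositive. Setting $\widetilde r_t = \norm{x-y} + 2 W_t$, a pathwise comparison (or simply integration) yields $r_t \leq \widetilde r_t$ for all $t < \tau_c$. Consequently, defining $\widetilde\tau = \inf\{s \geq 0 : \widetilde r_s \leq 0\}$, the inclusion $\{\tau_c > t\} \subset \{\widetilde\tau > t\}$ holds, since $\widetilde r_s \leq 0$ forces $r_s = 0$ (as $r_s \geq 0$), hence coupling by time $s$.

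It remains to compute $\PPMb{(x,y)}{\widetilde\tau > t}$. Writing $a = \norm{x-y}/2$, we have $\widetilde\tau = \inf\{ s \geq 0 : W_s \leq -a \}$, and the reflection principle for Brownian motion gives
\begin{equation*}
\PPMb{(x,y)}{\widetilde\tau \leq t} = 2\, \PPMb{(x,y)}{W_t \leq -a} = 2\bigl(1 - \bfPhi(a/\sqrt{t})\bigr) \eqsp,
\end{equation*}
and hence $\PPMb{(x,y)}{\widetilde\tau > t} = 2\bigl(\bfPhi(\norm{x-y}/(2\sqrt{t})) - 1/2\bigr)$, which is the claimed bound.

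The only delicate point is the use of Itô's formula at $\ZSDE_t = 0$; this is handled in the standard way by localizing at the stopping times $\tau_\varepsilon = \inf\{s : r_s \leq \varepsilon\}$ and letting $\varepsilon \downarrow 0$, so that the identity for $\rmd r_t$ above holds strictly on $\coint{0,\tau_c}$, which is all that is needed for the comparison.
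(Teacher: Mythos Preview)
Your proof is correct and follows essentially the same route as the paper: both apply It\^o's formula to $\norm{\XSDE_t-\YSDE_t}$ on $\coint{0,\tau_c}$, use \Cref{assum:drift} to bound the drift term by zero so that the radial process is dominated by $\norm{x-y}+2W_t$ with $W$ a one-dimensional Brownian motion, and then invoke the reflection principle. Your write-up is somewhat more explicit in verifying that the second-order It\^o term vanishes and in mentioning the localization at $r_t=\varepsilon$, but the argument is the same.
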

\begin{proof}
For $t < \tau_c$, $\XSDE_t-\YSDE_t$ is
the solution of the SDE
\begin{equation*}
  \rmd \{ \XSDE_t - \YSDE_t \} = \defEns{b(\XSDE_t)-b(\YSDE_t)} \rmd t + 2  e_t \rmd B_t^1 \eqsp,
\end{equation*}
where $B_t^1 = \int_0^t \2{s < \tau_c} e_s^T \rmd B_s^d$.  Using
the Itô's formula and \Cref{assum:drift}, we have for all $t <
\tau_c$,
\begin{equation*}
  \norm{\XSDE_t-\YSDE_t} = \norm{x-y} + \int_0^{t}\ps{b(\XSDE_s)-b(\YSDE_s)}{e_s} \rmd s +2B_t^1  \leq \norm{x-y} + 2 B_t^1 \eqsp.
\end{equation*}
Therefore, for all $x,y \in \rset^d $ and $t \geq 0$, we get
\begin{align*}
\PPMb{(x,y)}{\tau_c > t} &\leq  \PPMb{(x,y)}{\min_{0 \leq s \leq t} B_s^1 \geq \norm{x-y}/2} \\
&= \PPMb{(x,y)}{\max_{0 \leq s \leq t} B_s^1 \leq \norm{x-y}/2}
= \PPMb{(x,y)}{|B_t^1| \leq \norm{x-y}/2} \eqsp,
\end{align*}
where we have used the reflection principle in the last identity.
\end{proof}
Define  for $R >0$ the set
$ \diagSet_R= \{x,y \in \rset^d \ | \ \norm{x-y} \leq R\}$.
\Cref{propo:reflection_coupling_SDE} and Lindvall's inequality give that, for all
$\epsilon \in \ooint{0,1}$ and $t \geq \Fsmall(\epsilon,R)$,
\begin{equation}
  \label{eq:uniform-bound}
  \sup_{(x,y) \in \diagSet_R} \tvnorm{\PSDE_t(x,\cdot) - \PSDE_t(y,\cdot)} \leq 2 (1-\epsilon) \eqsp,
\end{equation}
where $\Fsmall$ is defined in \eqref{eq:Fsmall}.
To obtain quantitative exponential bounds in total variation for any $x,y \in \rset^d$, it is required to control some exponential moments of the successive
return times to $\diagSet_R$. This is first achieved by using a drift
condition for the generator $\generator$ associated with the
SDE \eqref{eq:SDE_basics} defined for all
$f \in C^2(\rset^d)$ by
\begin{equation*}
\generator f = \ps{b}{\nabla f} + (1/2) \Delta f  \eqsp.
\end{equation*}
 Consider the following assumption:
\begin{assumptionG}
  \label{assum:drift-simple}
\begin{enumerate}[label=(\roman*)]
\item
\label{item:assum:drift-simple}
There exist a twice continuously differentiable function
$ \Vdrifta: \rset^d \mapsto \coint{1,\infty}$ and constants $\thetadrift
  > 0$, $\bdrift \geq 0$ such that
\begin{equation}
\label{eq:lyapunov-conditionb}
\generator \Vdrifta \leq -\thetadrift \Vdrifta + \bdrift  \eqsp.
\end{equation}
\item
\label{item:2}
There exists $\deltaSS >0$ and $\RdriftSS > 0$ such that  $\Thetadrift \subset \diagSet_{\RdriftSS}$ where
\begin{equation}
  \label{eq:definition-ThetaDrift}
\Thetadrift = \{ (x,y) \in \rset^{2d} \ | \ \Vdrifta(x) + \Vdrifta(y) \leq 2 \thetadrift^{-1} \bdrift +\delta \} \eqsp.
\end{equation}
\end{enumerate}
\end{assumptionG}
For $\time > 0$, and $\closed$ a closed
subset of $\rset^{2d}$, define by  $\tpsRetour_1^{\closed,\time}$ the first return time to $\closed$ delayed by
$\time$:
\begin{equation*}
\label{eq:def_tpsRetour_1}
\tpsRetour_1^{\closed,\time} = \inf\defEns{s \geq \time \ | \ (\XSDE_s,\YSDE_s) \in \closed  } \eqsp.
\end{equation*}
For $j \geq 2$, define recursively the $j$-th return time to $\closed$
delayed by $\time$ by
\begin{equation}
  \label{eq:def_tpsRetour_j}
\tpsRetour_j^{\closed,\time} = \inf\{s \geq
\tpsRetour_{j-1}^{\closed,\time}+\time \ | \ (\XSDE_s,\YSDE_s) \in \closed \} =
\tpsRetour_{j-1}^{\closed,\time} + \tpsRetour_1^{\closed,\time} \circ \shift_{\tpsRetour_{j-1}^{\closed,\time}} \eqsp,
\end{equation}
where $\shift$ is the shift operator on the canonical space.  By \cite[Proposition 1.5 Chapter 2]{ethier:kurtz:1986},
the sequence $(\tpsRetour_j^{\closed,\time})_{j \geq 1}$ is a
sequence of stopping time with respect to $(\filtrationTilde_t)_{t
  \geq 0}$.


\begin{proposition}
  \label{propo:drift-condition}
  Assume \Cref{assum:drift} and \Cref{assum:drift-simple}.
For all $x,y \in \rset^{d}$, $\epsilon \in \ooint{0,1}$ and $j \geq 1$, we have
  \begin{equation*}
    \PEMb{(x,y)}{\rme^{\tilde{\thetadrift} \tpsRetour_j^{\Thetadrift,\eqspD \Fsmall(\epsilon,\RdriftSS)}}} \leq  \{\boundM(\epsilon)\}^{j-1} \defEns{  (1/2)(\Vdrifta(x) + \Vdrifta(y)) + \rme^{\tilde{\thetadrift}\Fsmall(\epsilon,\RdriftSS)}\tilde{\thetadrift}^{-1}\bdrift } \eqsp,
  \end{equation*}
\begin{equation}
\label{eq:definition_boundM}
 \tilde{\thetadrift} = \thetadrift^2 \delta(2 \bdrift + \thetadrift \delta)^{-1} \eqsp, \ \boundM(\epsilon) = \tilde{\thetadrift}^{-1} \bdrift\left(1+\rme^{\tilde{\thetadrift} \Fsmall(\epsilon,\RdriftSS)}\right)+\deltaSS/2   \eqsp,
\end{equation}
where $\Fsmall$ is defined in \eqref{eq:Fsmall}.
\end{proposition}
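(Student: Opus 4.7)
The plan is to combine the drift condition of \Cref{assum:drift-simple} with the strong Markov property of the reflection coupling, proceeding by induction on $j$. First I would introduce the coupled Lyapunov function $W(x,y)\defeq(\Vdrifta(x)+\Vdrifta(y))/2$ and observe that, since $W$ is a sum of a function of $x$ alone and a function of $y$ alone, the mixed partials $\partial_{x_iy_j}W$ vanish, so the cross-covariation produced by the correlated Brownian noise in \eqref{eq:def_couplage_par_reflection} drops out of the coupled generator. The generator of $(\XSDE,\YSDE)$ applied to $W$ therefore reduces to $(1/2)[\generator\Vdrifta(x)+\generator\Vdrifta(y)] \leq -\thetadrift W(x,y)+\bdrift$ by \eqref{eq:lyapunov-conditionb}. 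On $\Thetadrift^c$ one has by definition $W(x,y)>\thetadrift^{-1}\bdrift+\delta/2$, and the requirement $-\thetadrift W+\bdrift\leq -\tilde\thetadrift W$ on this region is equivalent to $\tilde\thetadrift\leq\thetadrift^2\delta/(2\bdrift+\thetadrift\delta)$; the value of $\tilde\thetadrift$ in the statement is precisely the largest such constant, yielding the \emph{multiplicative} drift $\generator W\leq -\tilde\thetadrift W$ on $\Thetadrift^c$ while the \emph{additive} drift $\generator W \leq -\thetadrift W+\bdrift$ holds everywhere.

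Next I would bound the exponential moment of the undelayed hitting time $\tau\defeq\inf\{s\geq 0:(\XSDE_s,\YSDE_s)\in\Thetadrift\}$ by $W(x,y)$. Applying Itô's formula to $e^{\tilde\thetadrift t}W(\XSDE_t,\YSDE_t)$, localizing the local martingale part and stopping at $t\wedge\tau$ produces a supermartingale since $\generator W+\tilde\thetadrift W\leq 0$ strictly before $\tau$; taking expectations, letting $t\to\infty$ via Fatou and using $W\geq 1$ yields $\PEMb{(x,y)}{e^{\tilde\thetadrift \tau}}\leq W(x,y)$. The strong Markov property at the deterministic time $T\defeq\Fsmall(\epsilon,\RdriftSS)$ then gives
\begin{equation*}
\PEMb{(x,y)}{e^{\tilde\thetadrift\tpsRetour_1^{\Thetadrift,T}}} = e^{\tilde\thetadrift T}\,\PEMb{(x,y)}{\PEMb{(\XSDE_T,\YSDE_T)}{e^{\tilde\thetadrift\tau}}} \leq e^{\tilde\thetadrift T}\,\PEMb{(x,y)}{W(\XSDE_T,\YSDE_T)}.
\end{equation*}
Dynkin's formula applied to $e^{\thetadrift t}W$ with the global additive drift supplies $\PEMb{(x,y)}{W(\XSDE_T,\YSDE_T)}\leq e^{-\thetadrift T}W(x,y)+(1-e^{-\thetadrift T})\thetadrift^{-1}\bdrift$, and combining with $\tilde\thetadrift\leq\thetadrift$ (so that $e^{(\tilde\thetadrift-\thetadrift)T}\leq 1$) together with $\thetadrift^{-1}\bdrift\leq\tilde\thetadrift^{-1}\bdrift$ produces exactly the $j=1$ case.

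For $j\geq 2$ I would iterate using the decomposition $\tpsRetour_j^{\Thetadrift,T} = \tpsRetour_{j-1}^{\Thetadrift,T}+\tpsRetour_1^{\Thetadrift,T}\circ\shift_{\tpsRetour_{j-1}^{\Thetadrift,T}}$ and the strong Markov property at $\tpsRetour_{j-1}^{\Thetadrift,T}$, which is a.s.\ finite by the previous step. Since $\Thetadrift$ is the closed sublevel set $\{\Vdrifta(x)+\Vdrifta(y)\leq 2\thetadrift^{-1}\bdrift+\delta\}$ and the sample paths of $(\XSDE,\YSDE)$ are continuous, $(\XSDE_{\tpsRetour_{j-1}^{\Thetadrift,T}},\YSDE_{\tpsRetour_{j-1}^{\Thetadrift,T}})\in\Thetadrift$, so $W\leq\thetadrift^{-1}\bdrift+\delta/2$ at this point. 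The $j=1$ bound applied from this state yields $\tilde\thetadrift^{-1}\bdrift+\delta/2+e^{\tilde\thetadrift T}\tilde\thetadrift^{-1}\bdrift = \boundM(\epsilon)$ (using once more $\thetadrift^{-1}\bdrift\leq\tilde\thetadrift^{-1}\bdrift$), so that
\begin{equation*}
\PEMb{(x,y)}{e^{\tilde\thetadrift\tpsRetour_j^{\Thetadrift,T}}} \leq \boundM(\epsilon)\,\PEMb{(x,y)}{e^{\tilde\thetadrift\tpsRetour_{j-1}^{\Thetadrift,T}}},
\end{equation*}
and the claim follows by induction. The main technical delicacy I expect is in the Itô/supermartingale step: the coupling diffusion is discontinuous on the diagonal $\diagSet$, but this does not enter the computation since $W$ is $C^2$ on $\rset^{2d}$ and the strong solvability of \eqref{eq:def_couplage_par_reflection} gives a well-defined quadratic variation, so standard localization together with Fatou's lemma suffices.
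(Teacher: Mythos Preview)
Your proposal is correct and follows essentially the same approach as the paper: derive a multiplicative drift $\generator W\leq -\tilde\thetadrift W$ outside $\Thetadrift$ from the additive drift \eqref{eq:lyapunov-conditionb}, use it together with Dynkin's formula to control the exponential moment of the first delayed return time, and iterate via the strong Markov property at $\tpsRetour_{j-1}^{\Thetadrift,T}$ where $W$ is bounded by $\thetadrift^{-1}\bdrift+\delta/2$. The only cosmetic difference is that you split $[0,\tpsRetour_1^{\Thetadrift,T}]$ at the deterministic time $T$ via the Markov property and bound the undelayed hitting time separately, whereas the paper phrases this directly as a supermartingale argument for $t\geq T$ and then invokes $\PEMb{(x,y)}{e^{\tilde\thetadrift t}\Vdrifta(\XSDE_t)}\leq \Vdrifta(x)+\tilde\thetadrift^{-1}\bdrift e^{\tilde\thetadrift t}$ to control the state at time $T$.
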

\begin{proof}
For notational simplicity, set $\tpsRetour_j = \tpsRetour_j^{\Thetadrift,\eqspD \Fsmall(\epsilon,\RdriftSS)}$.
Note that for all $x, y \in \rset^d$,
\begin{equation*}
  \generator \Vdrifta(x) + \generator \Vdrifta(y) \leq -\tilde{\thetadrift}(\Vdrifta(x) + \Vdrifta(y)) + 2\bdrift\1_{\Thetadrift}(x,y) \eqsp.
\end{equation*}
Then by the Dynkin formula (see
  e.g. \cite[Eq. (8)]{meyn:tweedie:1993:III})  the
  process
\begin{equation*}
t \mapsto (1/2)\rme^{\tilde{\thetadrift} \parenthese{\tpsRetour_1 \wedge
    t}} \defEns{\Vdrifta\parenthese{\XSDE_{\tpsRetour_1 \wedge
    t}} + \Vdrifta\parenthese{\YSDE_{\tpsRetour_1 \wedge t}} } \eqsp, \quad  t \geq  \Fsmall(\epsilon,\RdriftSS) \eqsp,
\end{equation*}
is a  positive supermartingale. Using the optional stopping theorem and the
  Markov property, we have, using that for all $t \geq 0$ $\PEMb{(x,y)}{\rme^{\tilde{\thetadrift}t} V(X_t)} \leq V(x) + \beta \tilde{\thetadrift}^{-1} \rme^{\tilde{\thetadrift} t}$,
  \begin{equation*}
    \PEMb{(x,y)}{\rme^{\tilde{\thetadrift} \tpsRetour_1}} \leq  (1/2)(\Vdrifta(x) + \Vdrifta(y)) + \rme^{\tilde{\thetadrift} \Fsmall(\epsilon,\RdriftSS)}\tilde{\thetadrift}^{-1}\bdrift  \eqsp.
  \end{equation*}
 The result  then follows  from this inequality and the strong Markov property.
\end{proof}

\begin{theorem}
  \label{theo:quantitative_bound_SDE}
  Assume \Cref{assum:drift} and \Cref{assum:drift-simple}. Then for all $\epsilon \in \ooint{0,1}$, $t \geq 0$ and $x,y \in \rset^d$,
  \begin{equation*}
    \tvnorm{\PSDE_t(x,\cdot) - \PSDE_t(y,\cdot)} \leq
2 \rme^{-\tilde{\thetadrift} t/2}\defEns{ (1/2)( \Vdrifta(x)+\Vdrifta(y)) + \rme^{\tilde{\thetadrift} \Fsmall(\epsilon,\RdriftSS)}\tilde{\thetadrift}^{-1}\bdrift }
+4\tauConv^t \eqsp, 
  \end{equation*}
where $\Fsmall$ is defined in \eqref{eq:Fsmall}, $\tilde{\thetadrift}, \boundM(\epsilon)$ in \eqref{eq:definition_boundM} and
\begin{equation*}
\log(\tauConv ) = (\tilde{\thetadrift}/2) \log(1-\epsilon) \{\log(\boundM(\epsilon)) -\log(1-\epsilon) \}^{-1} \eqsp.
\end{equation*}
\end{theorem}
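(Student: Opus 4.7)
The plan is to combine the reflection coupling \eqref{eq:def_couplage_par_reflection} with the drift-based exponential moment bound on return times to $\Thetadrift$ just established in \Cref{propo:drift-condition}. Lindvall's inequality applied to the coupling yields $\tvnorm{\PSDE_t(x,\cdot)-\PSDE_t(y,\cdot)}\leq 2\tilde{\mathbb{P}}_{(x,y)}(\tau_c>t)$, so it suffices to bound $\tilde{\mathbb{P}}_{(x,y)}(\tau_c>t)$.

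For a free integer $J\geq 1$ to be chosen later, the natural split using the return times $T_J=\tpsRetour_J^{\Thetadrift,\Fsmall(\epsilon,\RdriftSS)}$ is
\begin{equation*}
\tilde{\mathbb{P}}_{(x,y)}(\tau_c>t) \leq \tilde{\mathbb{P}}_{(x,y)}(T_J>t) + \tilde{\mathbb{P}}_{(x,y)}(\tau_c>T_J).
\end{equation*}
Markov's inequality together with \Cref{propo:drift-condition} immediately bounds the first summand by $e^{-\tilde{\thetadrift}t}\boundM(\epsilon)^{J-1}\Lambda(x,y)$ with $\Lambda(x,y)=(1/2)(\Vdrifta(x)+\Vdrifta(y))+e^{\tilde{\thetadrift}\Fsmall(\epsilon,\RdriftSS)}\tilde{\thetadrift}^{-1}\bdrift$. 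For the second summand, I would iterate the strong Markov property at $T_{J-1},\dots,T_{1}$. At each level $(\XSDE_{T_j},\YSDE_{T_j})\in\Thetadrift\subset\diagSet_{\RdriftSS}$ (by continuity of paths and closedness of $\Thetadrift$), and $T_{j+1}-T_j\geq\Fsmall(\epsilon,\RdriftSS)$ by construction, so the uniform bound \eqref{eq:uniform-bound} applied conditionally via the reflection-coupling interpretation (\Cref{propo:reflection_coupling_SDE} started from $(\XSDE_{T_j},\YSDE_{T_j})$) yields a conditional non-coupling probability on $[T_j,T_{j+1}]$ of at most $1-\epsilon$. Chaining these bounds gives $\tilde{\mathbb{P}}_{(x,y)}(\tau_c>T_J)\leq (1-\epsilon)^{J-1}$.

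The last step is to choose $J$ to balance the two rates. Taking $J-1=\lfloor \tilde{\thetadrift}\, t/\{2(\log\boundM(\epsilon)-\log(1-\epsilon))\}\rfloor$ keeps $\boundM(\epsilon)^{J-1}\leq e^{\tilde{\thetadrift}t/2}$ (since $\log\boundM(\epsilon)\leq \log\boundM(\epsilon)-\log(1-\epsilon)$), so the first summand is at most $e^{-\tilde{\thetadrift}t/2}\Lambda(x,y)$, while a direct calculation gives $(1-\epsilon)^{J-1}\leq (1-\epsilon)^{-1}\tauConv^{t}$ with the $\log\tauConv$ stated in the theorem. Multiplying by the factor $2$ from Lindvall and absorbing the rounding constant into the leading $4$ produces the announced bound $2e^{-\tilde{\thetadrift}t/2}\Lambda(x,y)+4\tauConv^{t}$.

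The main obstacle, I expect, is the rigorous strong-Markov chaining delivering $(1-\epsilon)^{J-1}$ on $\tilde{\mathbb{P}}_{(x,y)}(\tau_c>T_J)$: one has to verify that $(\XSDE_{T_j},\YSDE_{T_j})$ indeed lies in $\Thetadrift$ almost surely, that the mandatory waiting $T_1\circ\shift_{T_j}\geq\Fsmall(\epsilon,\RdriftSS)$ makes the uniform bound applicable conditionally at each level, and that the per-attempt success probability $\epsilon$ of the reflection coupling truly composes across attempts despite the non-independent geometry of consecutive returns. Once that is in place, the optimization over $J$ and the bookkeeping of floor/rounding constants is routine.
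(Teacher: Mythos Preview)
Your strategy---Lindvall's inequality, a split on the $J$-th delayed return time, \Cref{propo:drift-condition} via Markov's inequality, a strong-Markov chaining for the non-coupling probability, and an optimisation over $J$---is exactly the paper's strategy. There is however one missing step that matters for the stated constant. Applying \Cref{propo:drift-condition} directly to $\tilde{\mathbb{P}}_{(x,y)}(\tpsRetour_J>t)$ produces $e^{-\tilde{\thetadrift} t}\boundM(\epsilon)^{J-1}\Lambda(x,y)$, a \emph{product} of $\Lambda(x,y)$ and the growing factor $\boundM(\epsilon)^{J-1}$. Your choice of $J$ then yields the two pieces $e^{-\tilde{\thetadrift} t/2}\Lambda(x,y)$ and $(1-\epsilon)^{-1}\tauConv^{t}$; after the Lindvall factor $2$ the second becomes $2(1-\epsilon)^{-1}\tauConv^{t}$, which is \emph{not} bounded by $4\tauConv^{t}$ when $\epsilon>1/2$. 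So the ``absorbing the rounding constant into the leading $4$'' step is incorrect for $\epsilon\in(1/2,1)$.

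The paper avoids this by one extra split: it writes
\[
\tilde{\mathbb{P}}_{(x,y)}(\tpsRetour_\ell>t)\leq \tilde{\mathbb{P}}_{(x,y)}(\tpsRetour_1>t/2)+\tilde{\mathbb{P}}_{(x,y)}(\tpsRetour_\ell-\tpsRetour_1>t/2),
\]
bounding the first piece by $e^{-\tilde{\thetadrift} t/2}\Lambda(x,y)$ (this is where the initial-condition dependence sits) and the second, via the strong Markov property at $\tpsRetour_1$ together with $(\XSDE_{\tpsRetour_1},\YSDE_{\tpsRetour_1})\in\Thetadrift$ (so $\Lambda(\XSDE_{\tpsRetour_1},\YSDE_{\tpsRetour_1})\leq\boundM(\epsilon)$), by $e^{-\tilde{\thetadrift} t/2}\boundM(\epsilon)^{\ell-1}$ with \emph{no} $\Lambda(x,y)$ factor. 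With $\ell=\lceil (\tilde{\thetadrift} t/2)/(\log\boundM(\epsilon)-\log(1-\epsilon))\rceil$ this last term is at most $\tauConv^{t}$, and together with $(1-\epsilon)^{\ell}\leq\tauConv^{t}$ from the chaining one obtains $e^{-\tilde{\thetadrift} t/2}\Lambda(x,y)+2\tauConv^{t}$, hence $4\tauConv^{t}$ after Lindvall, uniformly in $\epsilon\in(0,1)$. Adding that single split fixes your argument.
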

\begin{proof}
  Let $x,y \in \rset^d$ and $t \geq 0$. For all $\nbTpsR \geq 1$ and $\epsilon \in \ooint{0,1}$,
  \begin{equation}
    \label{eq:decomposition_proba-1}
    \PPMb{(x,y)}{\tau_c > t} \leq \PPMb{(x,y)}{\tau_c > t , \tpsRetour_{\nbTpsR} \leq t} + \PPMb{(x,y)}{ \tpsRetour_{\nbTpsR} > t} \eqsp,
  \end{equation}
where $\tpsRetour_\ell = \tpsRetour_\ell^{\Thetadrift,\eqspD \Fsmall(\epsilon,\RdriftSS)}$.
  We now bound the two terms in the right hand side of this
  equation. For the first term, since $\Thetadrift \subset \diagSet_{\RdriftSS}$, by \eqref{eq:uniform-bound}, we have conditioning successively on $\filtrationTilde_{\tpsRetour_j}$, for $j=\nbTpsR,\dots,1$, and using the strong Markov property,
  \begin{equation}
\label{eq:first_bound_RHS_decomp_proba-1}
    \PPMb{(x,y)}{\tau_c > t , \tpsRetour_{\nbTpsR} \leq t} \leq (1-\epsilon)^{\nbTpsR} \eqsp.
  \end{equation}
For the second term,  using \Cref{propo:drift-condition} and the Markov inequality, we get
\begin{align*}
&   \PPMb{(x,y)}{ \tpsRetour_{\nbTpsR} > t}  \leq    \PPMb{(x,y)}{ \tpsRetour_{1} > t/2} + \PPMb{(x,y)}{ \tpsRetour_{\nbTpsR}-\tpsRetour_{1} > t/2}
\\
& \qquad\leq \rme^{-\tilde{\thetadrift} t/2}\defEns{ (1/2)( \Vdrifta(x)+\Vdrifta(y)) + \rme^{\tilde{\thetadrift} \Fsmall(\epsilon,\RdriftSS)}\tilde{\thetadrift}^{-1}\bdrift }
+ \rme^{-\tilde{\thetadrift} t/2}  \{\boundM(\epsilon)\}^{\nbTpsR-1}  \eqsp.
\end{align*}
The proof is completed combining this inequality and \eqref{eq:first_bound_RHS_decomp_proba-1} in \eqref{eq:decomposition_proba-1} and taking $\nbTpsR = \ceil{2^{-1}\tilde{\thetadrift}t \big / (\log(\boundM(\epsilon)) - \log(1-\epsilon))}$.
\end{proof}
More precise bounds can be obtained under more stringent assumption on the drift $b$; see \cite{bolley:gentil:guillin:2012} and \cite{eberle:2015}.
\begin{assumptionG}
  \label{assum:strongConvexityOutsideBallDrift}
  There exist $\RStD \geq 1$ and $\constStD >0$, such that for all
  $x,y \in \rset^d$, $\norm{x-y} \geq \RStD$,
  \[
  \ps{b(x) -b(y)} {x-y} \leq -\constStD
  \norm[2]{x-y} \eqsp.
  \]
\end{assumptionG}

\begin{proposition}
  \label{propo:moment-exp-return-time-ST}
Assume \Cref{assum:drift} and \Cref{assum:strongConvexityOutsideBallDrift}.
  \begin{enumerate}[label=(\alph*)]
  \item
\label{item:propo:moment-exp-return-time-ST-1}
For all $x,y \in \rset^d$ and $\epsilon \in \ooint{0,1}$
    \begin{equation*}
    \hspace{-1cm}  \PEMb{(x,y)}{\exp\parenthese{\frac{\constStD}{2} \parenthese{\tau_c \wedge \tpsRetour^{\diagSet_{\RStD},\Fsmall(\epsilon,\RStD)}_1}}} \leq 1 +\norm{x-y} + (1+\RStD) \rme^{\constStD \Fsmall(\epsilon,\RStD)/2} \eqsp.
    \end{equation*}
  \item
\label{item:propo:moment-exp-return-time-ST-2}
For all $x,y \in \rset^d$, $\epsilon \in \ooint{0,1}$ and $j \geq 1$
    \begin{multline*}
    \PEMb{(x,y)}{\exp\parenthese{(\constStD/2) \parenthese{\tau_c \wedge \tpsRetour^{\diagSet_{\RStD},\Fsmall(\epsilon,\RStD)}_j}}}\\
 \leq \{ \boundMd(\epsilon)\}^{j-1}\defEns{1 +\norm{x-y} + (1+\RStD) \rme^{\constStD \Fsmall(\epsilon,\RStD)/2}}\eqsp,
    \end{multline*}
\begin{equation}
    \label{eq:definition-borne-ST}
\boundMd(\epsilon) =   (1+\rme^{\constStD \Fsmall(\epsilon,\RStD)/2})(1+\RStD) \eqsp,
\end{equation}
where $\Fsmall$ is given in \eqref{eq:Fsmall}.
  \end{enumerate}
\end{proposition}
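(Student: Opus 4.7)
The plan for part (a) is to exploit the one-dimensional semimartingale decomposition of $r_t = \norm{\XSDE_t-\YSDE_t}$ already obtained in the proof of \Cref{propo:reflection_coupling_SDE}, namely $r_t = \norm{x-y} + \int_0^t \ps{b(\XSDE_s)-b(\YSDE_s)}{e_s}\rmd s + 2 B_t^1$ for $t < \tau_c$, together with the one-sided bound $\ps{b(\XSDE_s)-b(\YSDE_s)}{e_s} \leq -\constStD r_s$ on $\{r_s \geq \RStD\}$, obtained by dividing \Cref{assum:strongConvexityOutsideBallDrift} by $r_s$. Set $\kappa = \constStD/2$ and $\tau = \tau_c \wedge \tpsRetour_1^{\diagSet_{\RStD},\Fsmall(\epsilon,\RStD)}$. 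I would apply It\^o's formula to $N_t = \rme^{\kappa t}(1+r_t)$, for which the drift on $\{t < \tau_c\}$ equals $\rme^{\kappa t}[\kappa(1+r_t) + \ps{b(\XSDE_t)-b(\YSDE_t)}{e_t}]$.

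I would then split according to the position of $r_t$. On $\{r_t \geq \RStD\}$, using $\RStD \geq 1$ the drift is bounded by $\rme^{\kappa t}\kappa(1-r_t) \leq 0$. On $\{r_t < \RStD\}$, the delay $\Fsmall(\epsilon,\RStD)$ built into the definition of $\tpsRetour_1^{\diagSet_{\RStD},\Fsmall(\epsilon,\RStD)}$ forces $t < \Fsmall(\epsilon,\RStD)$ for $t<\tau$, so the drift is at most $\rme^{\kappa t}\kappa(1+\RStD)$ on a set of measure at most $\Fsmall(\epsilon,\RStD)$. After localising the stochastic integral by $\sigma_n = \inf\{t : r_t \geq n\}$, taking expectations in the optional stopping identity and letting $n,t \to \infty$ using monotone convergence (or Fatou), one obtains
\[
\PEMb{(x,y)}{\rme^{\kappa \tau}} \leq 1 + \norm{x-y} + (1+\RStD)\bigl(\rme^{\kappa \Fsmall(\epsilon,\RStD)} - 1\bigr) \eqsp,
\]
which is stronger than the claimed bound of part (a) and, in passing, also yields $\tau < \infty$ almost surely.

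For part (b), I would induct on $j$ using the strong Markov property at $\tpsRetour_{j-1}^{\diagSet_{\RStD},\Fsmall(\epsilon,\RStD)}$. On $\{\tau_c \leq \tpsRetour_{j-1}\}$ one trivially has $\tau_c \wedge \tpsRetour_j = \tau_c \wedge \tpsRetour_{j-1}$; on $\{\tau_c > \tpsRetour_{j-1}\}$, path continuity and the closedness of $\diagSet_{\RStD}$ give $\norm{\XSDE_{\tpsRetour_{j-1}} - \YSDE_{\tpsRetour_{j-1}}} \leq \RStD$, whence the strong Markov property combined with part (a) applied to the shifted process produces the conditional bound $\rme^{\kappa \tpsRetour_{j-1}}(1+\RStD)(1+\rme^{\kappa \Fsmall(\epsilon,\RStD)}) = \rme^{\kappa(\tau_c\wedge\tpsRetour_{j-1})}\boundMd(\epsilon)$. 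Since $\boundMd(\epsilon) \geq 1$, the two cases combine into the one-step recursion $\PEMb{(x,y)}{\rme^{\kappa(\tau_c\wedge\tpsRetour_j)}} \leq \boundMd(\epsilon)\,\PEMb{(x,y)}{\rme^{\kappa(\tau_c\wedge\tpsRetour_{j-1})}}$, and iterating down to $j=1$ and invoking (a) closes the argument.

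The principal technical point sits in (a): one must simultaneously arrange the correct drift sign on $\{r_t \geq \RStD\}$ (which is precisely why the assumption $\RStD \geq 1$ is imposed) and control the exceptional region $\{r_t < \RStD\}$ using only the $\Fsmall(\epsilon,\RStD)$-delay in the definition of the return time. Once (a) is secured, the passage to (b) is essentially strong-Markov bookkeeping.
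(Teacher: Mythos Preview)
Your proposal is correct and follows essentially the same route as the paper: both apply It\^o's formula to $\rme^{(\constStD/2)t}(1+\norm{\XSDE_t-\YSDE_t})$, use \Cref{assum:strongConvexityOutsideBallDrift} together with $\RStD\geq 1$ to make the drift nonpositive on $\{r_t\geq\RStD\}$, and confine the contribution of $\{r_t<\RStD\}$ to the time interval $[0,\Fsmall(\epsilon,\RStD)]$ via the delay built into $\tpsRetour_1$; the strong-Markov induction for (b) is identical. The only differences are cosmetic: the paper organises (a) as a supermartingale argument on $[\Fsmall(\epsilon,\RStD),\infty)$ followed by a direct bound on $[0,\Fsmall(\epsilon,\RStD)]$ (which amounts to your position-split rephrased in time), and it localises with $\tau_k=\inf\{t:r_t\notin[k^{-1},k]\}$ rather than your $\sigma_n$---you should add the lower cutoff $k^{-1}$ too, so that the one-dimensional It\^o decomposition of $r_t$ is used only where $r_t>0$, before passing to the limit $k\to\infty$ by monotone convergence.
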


\begin{proof}
In the proof, we set $\tpsRetour_j = \tpsRetour_j^{\diagSet_{\RStD},\Fsmall(\epsilon,\RStD)}$.
\begin{enumerate}[label=(\alph*),wide=0pt,labelindent=\parindent]
\item
Consider  the sequence of increasing stopping time
$$
\tau_k =  \inf\{ t > 0 \ | \ \norm{\XSDE_t-\YSDE_t}  \not \in \ccint{k^{-1},k} \} \eqsp,  \quad k \geq 1 \eqsp,
$$
and set $\zeta_k = \tau_k \wedge \tpsRetour_1$. We derive a
  bound on $\PEMbLigne{(x,y)}{\exp\{ (\constStD/2)\zeta_k\}}$ independent
  on $k$. Since $\lim_{k \to \plusinfty } \uparrow\tau_k = \tau_c$  almost surely, the monotone convergence theorem implies that the same bound holds for  $\PEMbLigne{(x,y)}{\exp\{(\constStD/2)(\tau_c \wedge
    \tpsRetour_1)\}}$. Set now $\VdriftS(x,y) = 1 + \norm{x-y}$.  Since $\VdriftS \geq
  1$ and $\tau_c < \infty$~\as~by \Cref{propo:reflection_coupling_SDE}, it suffices to give a bound on $\PEMbLigne{(x,y)}{\exp\{ (\constStD/2)\zeta_k\}\VdriftS(\XSDE_{\zeta_k},\YSDE_{\zeta_k})}$. By Itô's formula, we
  have for all $v,t \leq \tau_c$, $v \leq t$
  \begin{multline}
\label{eq:ito-St-base}
\rme^{\constStD t/2}\VdriftS(\XSDE_t,\YSDE_t)
    = \rme^{\constStD v/2} \VdriftS(\XSDE_v,\YSDE_v) +(\constStD/2) \int_v^{t} \rme^{\constStD u/2} \VdriftS(\XSDE_u,\YSDE_u)  \rmd u \\
 +\int_v^{t} \rme^{\constStD u/2}\ps{b(\XSDE_u)-b(\YSDE_u)}{e_u} \rmd u +2\int_v^t \rme^{\constStD u/2}\rmd B_u^1 \eqsp.
\end{multline}
Using \Cref{assum:strongConvexityOutsideBallDrift}$(b)$, we have for all $k \geq 1$ and  $t_{\mathrm{s}}= \Fsmall(\epsilon,\RStD) \leq v \leq t$
  \begin{multline*}
    \rme^{(\constStD/2) (\zeta_k \wedge t)} \VdriftS(\XSDE_{\zeta_k \wedge t},\YSDE_{\zeta_k \wedge t}) \leq \rme^{(\constStD/2) (\zeta_k \wedge v) } \VdriftS(\XSDE_{\zeta_k \wedge v},\YSDE_{\zeta_k \wedge v}) \\+2\int_{\zeta_k \wedge v}^{\zeta_k\wedge t} \rme^{\constStD u/2}\rmd B_u^{1} \eqsp.
  \end{multline*}
So the process
\begin{equation*}
\defEns{\exp\parenthese{(\constStD/2) (\zeta_k \wedge t)} \VdriftS(\XSDE_{\zeta_k \wedge t},\YSDE_{\zeta_k \wedge t}) }_{ t \geq t_{\mathrm{s}}}  \eqsp,
\end{equation*}
is a positive supermartingale and  by the optional stopping theorem, we get
\begin{equation}
  \label{eq:supermartingale-1}
\PEMb{(x,y)}{ \rme^{(\constStD/2) \zeta_k} \VdriftS(\XSDE_{\zeta_k },\YSDE_{\zeta_k })} \leq \PEMb{(x,y)} {\rme^{(\constStD/2) (\tau_k \wedge t_{\mathrm{s}})} \VdriftS(\XSDE_{\tau_k \wedge t_{\mathrm{s}}},\YSDE_{\tau_k \wedge t_{\mathrm{s}} })} \eqsp,
\end{equation}
where we used that   $\zeta_k \wedge t_{\mathrm{s}} = \tau_k \wedge t_{\mathrm{s}}$.
By \eqref{eq:ito-St-base}, \Cref{assum:drift} and \Cref{assum:strongConvexityOutsideBallDrift}, we have
\begin{equation*}
\PEMb{(x,y)} {\rme^{(\constStD/2) (\tau_k \wedge t_{\mathrm{s}})} \VdriftS(\XSDE_{\tau_k \wedge t_{\mathrm{s}}},\YSDE_{\tau_k \wedge t_{\mathrm{s}} })}
 \leq \VdriftS(x,y) + (1+\RStD) \rme^{\constStD t_{\mathrm{s}}/2} \eqsp,
\end{equation*}
and \eqref{eq:supermartingale-1} becomes
\begin{equation*}
  \PEMb{(x,y)}{ \rme^{(\constStD/2) \zeta_k} \VdriftS(\XSDE_{\zeta_k },\YSDE_{\zeta_k })} \leq \VdriftS(x,y) + (1+\RStD) \rme^{\constStD t_{\mathrm{s}}/2} \eqsp.
\end{equation*}
\item
 The proof is by induction. The case $j=1$ has been established above. Now let $j \geq
2$. Since on the event $\{ \tau_c >
\tpsRetour_{j-1} \}$, we have
\[
\tau_c \wedge \tpsRetour_j = \tpsRetour_{j-1} + (\tau_c \wedge \tpsRetour_{1}) \circ \shift_{\tpsRetour_{j-1}} \eqsp,
\]
 where $\shift$ is the shift operator, we have conditioning on $\filtrationTilde_{\tpsRetour_{j-1}}$, using the strong Markov property, \Cref{propo:reflection_coupling_SDE} and the first part,
 \begin{equation*}
   \PEMb{(x,y)}{\1_{\{ \tau_c > \tpsRetour_{j-1} \}} \rme^{(\constStD/2) \parenthese{\tau_c \wedge \tpsRetour_j}}}
\leq \boundMd(\epsilon) \, \PEMb{(x,y)}{\1_{\{ \tau_c > \tpsRetour_{j-1} \}}\rme^{(\constStD/2)\tpsRetour_{j-1}}  } \eqsp.
 \end{equation*}

Then the proof follows since $\boundMd(\epsilon) \geq 1$.
\end{enumerate}
\end{proof}

\begin{theorem}
  \label{theo:quantitative_bound_SDE_2}
  Assume \Cref{assum:drift} and \Cref{assum:strongConvexityOutsideBallDrift}. Then for all $\epsilon \in \ooint{0,1}$, $t \geq 0$ and $x,y \in \rset^d$,
  \begin{equation*}
    \tvnorm{\PSDE_t(x,\cdot) - \PSDE_t(y,\cdot)} \leq 2 \defEns{(1-\epsilon)^{-1}+1 +\norm{x-y} }\tauConvSt^t
  \end{equation*}
\begin{equation*}
\log(\tauConvSt ) = (\constStD/2) \log(1-\epsilon) (\log(\boundMd(\epsilon)) -\log(1-\epsilon) )^{-1} \eqsp,
\end{equation*}
 where $\boundMd(\epsilon)$ is defined in  \eqref{eq:definition-borne-ST}.
\end{theorem}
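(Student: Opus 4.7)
The strategy closely mirrors the proof of \Cref{theo:quantitative_bound_SDE}, replacing the drift-based exponential moment estimate of \Cref{propo:drift-condition} by the sharper, strong-convexity–based estimate of \Cref{propo:moment-exp-return-time-ST}\ref{item:propo:moment-exp-return-time-ST-2}. The entry point is again Lindvall's inequality: since under the reflection coupling \eqref{eq:def_couplage_par_reflection} one has $\XSDE_s = \YSDE_s$ for every $s \geq \tau_c$, it gives
\begin{equation*}
\tvnorm{\PSDE_t(x,\cdot) - \PSDE_t(y,\cdot)} \leq 2\PPMb{(x,y)}{\tau_c > t}\,,
\end{equation*}
and everything reduces to bounding this coupling time tail geometrically in $t$ with a prefactor of the required form.

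Fix $\epsilon \in \ooint{0,1}$ and, for a positive integer $\ell$ to be chosen later, set $\tpsRetour_\ell = \tpsRetour_\ell^{\diagSet_{\RStD},\,\Fsmall(\epsilon,\RStD)}$. I would decompose
\begin{equation*}
\PPMb{(x,y)}{\tau_c > t} \leq \PPMb{(x,y)}{\tau_c > t,\ \tpsRetour_\ell \leq t} + \PPMb{(x,y)}{\tau_c \wedge \tpsRetour_\ell > t}\,.
\end{equation*}
For the first probability, conditioning successively on $\filtrationTilde_{\tpsRetour_j}$ for $j = \ell, \ell-1, \ldots, 1$ and applying \eqref{eq:uniform-bound} to each of the $\ell$ windows $[\tpsRetour_j, \tpsRetour_j + \Fsmall(\epsilon,\RStD)]$ (each starting from a configuration in $\diagSet_{\RStD}$) yields an upper bound of $(1-\epsilon)^\ell$, exactly as in the proof of \Cref{theo:quantitative_bound_SDE}. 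For the second probability, Markov's inequality applied at rate $\constStD/2$ combined with \Cref{propo:moment-exp-return-time-ST}\ref{item:propo:moment-exp-return-time-ST-2} gives
\begin{equation*}
\PPMb{(x,y)}{\tau_c \wedge \tpsRetour_\ell > t} \leq \rme^{-\constStD t/2}\{\boundMd(\epsilon)\}^{\ell-1}\defEns{1+\norm{x-y}+(1+\RStD)\rme^{\constStD \Fsmall(\epsilon,\RStD)/2}}\,.
\end{equation*}

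I would then balance the two terms by choosing $\ell$ close to $\ell^\star = (\constStD t/2)/\log(\boundMd(\epsilon)/(1-\epsilon))$, so that, with the value of $\log\tauConvSt$ given in the statement, both $(1-\epsilon)^\ell$ and $\rme^{-\constStD t/2}\{\boundMd(\epsilon)\}^{\ell-1}$ are controlled by $\tauConvSt^t$ up to a rounding factor. Invoking finally the bound $(1+\RStD)\rme^{\constStD \Fsmall(\epsilon,\RStD)/2} \leq \boundMd(\epsilon)$, which follows directly from the definition \eqref{eq:definition-borne-ST} of $\boundMd(\epsilon)$, the constant multiplying $\tauConvSt^t$ collapses to the announced $(1-\epsilon)^{-1}+1+\norm{x-y}$.

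The one delicate point is the integer rounding in the choice of $\ell$: the real-valued optimum $\ell^\star$ is in general not an integer, and a crude choice of $\lceil\ell^\star\rceil$ or $\lfloor\ell^\star\rfloor$ produces an extra multiplicative factor of either $\boundMd(\epsilon)$ or $(1-\epsilon)^{-1}$ in the prefactor. The $(1-\epsilon)^{-1}$ appearing in the conclusion comes exactly from an appropriate choice of rounding, so a careful bookkeeping of which direction to round in each of the two estimates is where the work lies. Everything else is routine and follows the template of \Cref{theo:quantitative_bound_SDE} verbatim.
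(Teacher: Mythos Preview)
Your proposal is correct and matches the paper's proof essentially line by line: the paper uses the same decomposition into $\{\tau_c>t,\,\tpsRetour_\ell\le t\}$ and $\{\tau_c\wedge\tpsRetour_\ell>t\}$, bounds the first piece by $(1-\epsilon)^\ell$ via \eqref{eq:uniform-bound} and the strong Markov property, bounds the second via Markov's inequality and \Cref{propo:moment-exp-return-time-ST}\ref{item:propo:moment-exp-return-time-ST-2}, and then chooses $\ell=\lfloor(\constStD t/2)/(\log\boundMd(\epsilon)-\log(1-\epsilon))\rfloor$. The observation $(1+\RStD)\rme^{\constStD\Fsmall(\epsilon,\RStD)/2}\le\boundMd(\epsilon)$ (in fact $\boundMd(\epsilon)\ge 1+(1+\RStD)\rme^{\constStD\Fsmall(\epsilon,\RStD)/2}$, since the difference is $\RStD\ge 0$) is exactly what collapses the prefactor, and the floor rounding is what produces the extra $(1-\epsilon)^{-1}$ in the first term while keeping the second term controlled by $(1+\norm{x-y})\tauConvSt^t$.
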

\begin{proof}
The proof is along the same lines as  \Cref{theo:quantitative_bound_SDE}.
Set $\tpsRetour_j = \tpsRetour_j^{\diagSet_{\RStD},\Fsmall(\epsilon,\RStD)}$ for $j \geq 1$.
  Let $x,y \in \rset^d$ and $t \geq 0$. For all $\nbTpsR \geq 1$ and $\epsilon \in \ooint{0,1}$,
  \begin{equation}
    \label{eq:decomposition_proba}
    \PPMb{(x,y)}{\tau_c > t} \leq \PPMb{(x,y)}{\tau_c > t , \tpsRetour_{\nbTpsR} \leq t} + \PPMb{(x,y)}{ \tpsRetour_{\nbTpsR} \wedge \tau_c > t} \eqsp.
  \end{equation}
   For the first term, by \eqref{eq:uniform-bound} we have conditioning successively on $\filtrationTilde_{\tpsRetour_j}$, for $j=\nbTpsR,\cdots,1$, and using the strong Markov property,
  \begin{equation}
\label{eq:first_bound_RHS_decomp_proba}
    \PPMb{(x,y)}{\tau_c > t , \tpsRetour_{\nbTpsR} \leq t} \leq (1-\epsilon)^{\nbTpsR} \eqsp.
  \end{equation}
For the second term, using \Cref{propo:moment-exp-return-time-ST}-\ref{item:propo:moment-exp-return-time-ST-2} and the Markov inequality, we get
\begin{equation}
  \label{eq:second_bound_RHS_decomp_proba}
\hspace{-0.1cm}
   \PPMb{(x,y)}{ \tpsRetour_{\nbTpsR} \wedge \tau_c > t} \leq \rme^{-\frac{\constStD t}{2} }  \{\boundMd(\epsilon)\}^{\nbTpsR-1} \defEns{ 1 +\norm{x-y} + (1+\RStD) \rme^{\frac{\constStD \Fsmall(\epsilon,\RStD)}{2}}} \eqsp.
\end{equation}
Taking $\nbTpsR = \floor{(\constStD t/2)  \big/ (\log(\boundMd(\epsilon)) - \log(1-\epsilon)) }$ and combining \eqref{eq:first_bound_RHS_decomp_proba}-\eqref{eq:second_bound_RHS_decomp_proba} in \eqref{eq:decomposition_proba}   concludes the proof.
\end{proof}

\subsection{Proof of \Cref{theo:quantative-bound-lang-convex} and \Cref{theo:convergence_TV_dec-stepsize-StV}}
\label{ssec:proof:theo:quantative-bound-lang-stong-convex}
\label{sec:proof-crefth-bound-quant-conv}
\label{sec:proof-crefth-steps-1}

Recall that $(\PLang_t)_{t \geq 0}$ is the Markov semigroup
of the Langevin equation associated with $\UD$ and let $\geneLang$ be the
corresponding generator.  Since $(\PLang_t)_{t \geq 0}$ is reversible
with respect to $\pi$, we deduce from
\Cref{theo:quantitative_bound_SDE} and \Cref{theo:quantitative_bound_SDE_2} quantitative bounds for the
exponential convergence of $(\PLang_t)_{t \geq 0}$ to $\pi$ in
total variation noting that if $(\YL_t)_{t \geq 0}$ is a solution of \eqref{eq:langevin}, then $(\YL_{t/2})_{t \geq 0}$ is a weak solution of the  rescaled Langevin diffusion:
\begin{equation}
\label{eq:langevin-scale}
  \rmd \YLtilde_t= -(1/2)\nabla U(\YLtilde_t) \rmd t + \rmd \BM^d_t \eqsp.
\end{equation}

\begin{proof}[Proof of \Cref{theo:quantative-bound-lang-convex}]
Since the generator associated with the SDE  \eqref{eq:langevin-scale} is $(1/2) \generatorL$, \Cref{propo:drift_exp_conv} shows that
\eqref{eq:lyapunov-conditionb}  holds for $\VOne$
with constants $\thetaOne/2$ and $\bOne/2$.
Using that for all $a_1,a_2 \in \rset$, $\rme^{(a_1+a_2)/2} \leq (1/2)(\rme^{a_1}+\rme^{a_2})$, \Cref{assum:drift-simple}-\ref{item:2} holds for $\deltaSS=2 \thetaOne^{-1}\bOne$ and $\RdriftSS = (8/\rhoU) \log( 4 \thetaOne^{-1} \bOne)$.
By  \Cref{theo:quantitative_bound_SDE} with $\epsilon = 1/2$, we get for all $x,y \in \rset^d$ and $t \geq 0$

\begin{multline}
\label{eq:9}
    \tvnorm{\PLang_t(x,\cdot) - \PLang_t(y,\cdot)} \leq 4\varpi^t \\
+  
2 \rme^{-\thetaOne t/4}\defEns{ (1/2)( \VOne(x)+\VOne(y)) 
+ 2 \thetaOne^{-1} \bOne\rme^{4 \thetaOne^{-1} \Fsmall(2^{-1},(8/\rhoU) \log( 4 \thetaOne^{-1} \bOne))} }
 \eqsp,
\end{multline}
where $\varpi$ is defined in \eqref{eq:kappa-convex}.
By \cite[Theorem 4.3-(ii)]{meyn:tweedie:1993:III}, \eqref{eq:lyapunov-conditionb} implies that $\int_{\rset^d} \VOne(y) \pi(\rmd y ) \leq \bOne \thetaOne^{-1}$.  The proof  is then concluded using this bound, \eqref{eq:9} and that $\pi$ is invariant for $(\PL_t)_{t \geq 0}$
\end{proof}

\begin{proof}[Proof of \Cref{theo:convergence_TV_dec-stepsize-StV}]
By applying \Cref{theo:quantitative_bound_SDE_2} with $\epsilon = 1/2$, the triangle inequality and using that $\pi$ is invariant for $(\PL_t)_{t \geq 0}$, we have
\begin{equation*}
    \tvnorm{\PLang_t(x,\cdot) -\pi} \leq
    2 \defEns{3+\norm{x-\xstar}+\int_{\rset^d}\norm{y-\xstar}\rmd \pi(y) }\tauConvSt^{t} \eqsp.
  \end{equation*}
It remains to show that  $\int_{\rset^d}\norm{y-\xstar}\rmd \pi(y) \leq (d/\constSt + \RSt^2)^{1/2}$. For this, we establish a drift inequality for the generator $\generatorL$ of the Langevin SDE associated with $\UD$. Consider the function $\VStD(x) = \norm[2]{x-\xstar}$. For all $x \in \rset^d$, we have using $\nabla U(\xstar)=0$,
\begin{equation*}
  \generatorL \VStD(x) \leq 2(d -\ps{\nabla U(x)-\nabla U(\xstar)}{x-\xstar} ) \eqsp.
\end{equation*}
Therefore by \Cref{assum:strongConvexityOutsideBallDrift}, for all $x \in \rset^d$, $\norm{x-\xstar} \geq \RSt$, we get
\begin{equation*}
  \generatorL \VStD(x) \leq -2\constSt\VStD(x) + 2d \eqsp,
\end{equation*}
and for all $x \in \rset^d$,
\begin{equation*}
  \generatorL \VStD(x) \leq  -2 \constSt\VStD(x) + 2 (d + \constSt \RSt^2) \eqsp.
\end{equation*}
By \cite[Theorem 4.3-(ii)]{meyn:tweedie:1993:III}, we get $ \int_{\rset^d}\VStD(y)\rmd \pi(y) \leq d/\constSt +  \RSt^2$.
The bound on $C(\delta_x \QKer_{\gaStep}^n)$ is a consequence of the Cauchy-Schwarz inequality, \Cref{propo:drift_R_gamma_Stconv} and \Cref{lem:bound_inho_moment}.
The bound for $A(\gaStep,x)$ similarly follows from \Cref{assum:regularity}, \Cref{propo:drift_R_gamma_Stconv} and \Cref{lem:bound_inho_moment}.

\end{proof}

\section*{Acknowledgements}
The authors are indebted to Arnaud Guillin for sharing his knowledge of Poincaré and log-Sobolev inequalities.
The authors are grateful to Andreas Eberle for very careful readings and many useful comments.
The author thank the anonymous referees for their constructive feedback.
The work of A.D. and E.M. is supported by the Agence Nationale de la Recherche, under grant ANR-14-CE23-0012 (COSMOS).
\bibliographystyle{plain}
\bibliography{biblio}

\begin{thebibliography}{10}

\bibitem{andrieu:defreitas:doucet:jordan:2003}
C.~Andrieu, N.~De~Freitas, A.~Doucet, and M.~I Jordan.
\newblock An introduction to {MCMC} for machine learning.
\newblock {\em Machine learning}, 50(1-2):5--43, 2003.

\bibitem{bakry:barthe:cattiaux:guillin:2008}
D.~Bakry, F.~Barthe, P.~Cattiaux, and A.~Guillin.
\newblock A simple proof of the { P}oincar\'e inequality for a large class of
  probability measures.
\newblock {\em Electronic Communications in Probability [electronic only]},
  13:60--66, 2008.

\bibitem{bakry:cattiaux:guillin:2008}
D.~Bakry, P.~Cattiaux, and A.~Guillin.
\newblock Rate of convergence for ergodic continuous {M}arkov processes:
  {L}yapunov versus {P}oincar\'e.
\newblock {\em J. Funct. Anal.}, 254(3):727--759, 2008.

\bibitem{bakry:gentil:ledoux:2014}
D.~Bakry, I.~Gentil, and M.~Ledoux.
\newblock {\em Analysis and geometry of {M}arkov diffusion operators}, volume
  348 of {\em Grundlehren der Mathematischen Wissenschaften [Fundamental
  Principles of Mathematical Sciences]}.
\newblock Springer, Cham, 2014.

\bibitem{bobkov:1999}
S.~G. Bobkov.
\newblock Isoperimetric and analytic inequalities for log-concave probability
  measures.
\newblock {\em Ann. Probab.}, 27(4):1903--1921, 10 1999.

\bibitem{bolley:gentil:guillin:2012}
F.~Bolley, I.~Gentil, and A.~Guillin.
\newblock Convergence to equilibrium in {W}asserstein distance for
  {F}okker-{P}lanck equations.
\newblock {\em J. Funct. Anal.}, 263(8):2430--2457, 2012.

\bibitem{boucheron:lugosi:massart:2013}
S.~Boucheron, G.~Lugosi, and P.~Massart.
\newblock {\em Concentration inequalities}.
\newblock Oxford University Press, Oxford, 2013.
\newblock A nonasymptotic theory of independence, With a foreword by Michel
  Ledoux.

\bibitem{bubley:dyer:jerrum:1998}
R.~Bubley, M.~Dyer, and M.~Jerrum.
\newblock An elementary analysis of a procedure for sampling points in a convex
  body.
\newblock {\em Random Structures Algorithms}, 12(3):213--235, 1998.

\bibitem{cattiaux:guillin:2009}
P.~Cattiaux and A.~Guillin.
\newblock Trends to equilibrium in total variation distance.
\newblock {\em Ann. Inst. Henri Poincar\'e Probab. Stat.}, 45(1):117--145,
  2009.

\bibitem{chen:shao:1989}
M.~F. Chen and S.~F. Li.
\newblock Coupling methods for multidimensional diffusion processes.
\newblock {\em Ann. Probab.}, 17(1):151--177, 1989.

\bibitem{cotter:roberts:stuart:white:2013}
S.~L. Cotter, G.~O. Roberts, A.~M. Stuart, and D.~White.
\newblock M{CMC} methods for functions: modifying old algorithms to make them
  faster.
\newblock {\em Statist. Sci.}, 28(3):424--446, 2013.

\bibitem{dalalyan:2014}
A.~S. Dalalyan.
\newblock Theoretical guarantees for approximate sampling from smooth and
  log-concave densities.
\newblock {\em Journal of the Royal Statistical Society: Series B (Statistical
  Methodology)}, pages n/a--n/a, 2016.

\bibitem{dalalyan:tsybakov:2012}
A.~S. Dalalyan and A.~B. Tsybakov.
\newblock Sparse regression learning by aggregation and {L}angevin
  {M}onte-{C}arlo.
\newblock {\em J. Comput. System Sci.}, 78(5):1423--1443, 2012.

\bibitem{durmus:moulines:pereyra:2016}
A.~Durmus, \'E. Moulines, and M.~Pereyra.
\newblock Sampling from convex non continuously differentiable functions, when
  {Moreau meets Langevin}.
\newblock In preparation.

\bibitem{eberle:2015}
A.~Eberle.
\newblock Reflection couplings and contraction rates for diffusions.
\newblock {\em Probab. Theory Related Fields}, pages 1--36, 2015.

\bibitem{ethier:kurtz:1986}
S.N. Ethier and T.G. Kurtz.
\newblock {\em Markov processes: characterization and convergence}.
\newblock Wiley series in probability and mathematical statistics. Probability
  and mathematical statistics. Wiley, 1986.

\bibitem{grenander:1983}
U.~Grenander.
\newblock Tutorial in pattern theory.
\newblock Division of Applied Mathematics, Brown University, Providence, 1983.

\bibitem{grenander:miller:1994}
U.~Grenander and M.~I. Miller.
\newblock Representations of knowledge in complex systems.
\newblock {\em J. Roy. Statist. Soc. Ser. B}, 56(4):549--603, 1994.
\newblock With discussion and a reply by the authors.

\bibitem{holley:stroock:1987}
R.~Holley and D.~Stroock.
\newblock Logarithmic sobolev inequalities and stochastic ising models.
\newblock {\em Journal of statistical physics}, 46(5):1159--1194, 1987.

\bibitem{ikeda:watanabe:1989}
N.~Ikeda and S.~Watanabe.
\newblock {\em Stochastic Differential Equations and Diffusion Processes}.
\newblock North-Holland Mathematical Library. Elsevier Science, 1989.

\bibitem{karatzas:shreve:1991}
I.~Karatzas and S.E. Shreve.
\newblock {\em Brownian Motion and Stochastic Calculus}.
\newblock Graduate Texts in Mathematics. Springer New York, 1991.

\bibitem{kullback:1997}
S.~Kullback.
\newblock {\em Information theory and statistics}.
\newblock Dover Publications, Inc., Mineola, NY, 1997.
\newblock Reprint of the second (1968) edition.

\bibitem{lamberton:pages:2002}
D.~Lamberton and G.~Pag{\`e}s.
\newblock Recursive computation of the invariant distribution of a diffusion.
\newblock {\em Bernoulli}, 8(3):367--405, 2002.

\bibitem{lamberton:pages:2003}
D.~Lamberton and G.~Pag{\`e}s.
\newblock Recursive computation of the invariant distribution of a diffusion:
  the case of a weakly mean reverting drift.
\newblock {\em Stoch. Dyn.}, 3(4):435--451, 2003.

\bibitem{lemaire:2005}
V.~Lemaire.
\newblock {\em Estimation de la mesure invariante d'un processus de diffusion}.
\newblock PhD thesis, Université Paris-Est, 2005.

\bibitem{lemaire:menozzi:2010}
V.~Lemaire and S.~Menozzi.
\newblock On some non asymptotic bounds for the {E}uler scheme.
\newblock {\em Electron. J. Probab.}, 15:no. 53, 1645--1681, 2010.

\bibitem{lindvall:rogers:1986}
T.~Lindvall and L.~C.~G. Rogers.
\newblock Coupling of multidimensional diffusions by reflection.
\newblock {\em Ann. Probab.}, 14(3):860--872, 1986.

\bibitem{lovasz:vempala:2007}
L.~Lov{\'a}sz and S.~Vempala.
\newblock The geometry of logconcave functions and sampling algorithms.
\newblock {\em Random Structures Algorithms}, 30(3):307--358, 2007.

\bibitem{mattingly:stuart:higham:2002}
J.~C. Mattingly, A.~M. Stuart, and D.~J. Higham.
\newblock Ergodicity for {SDE}s and approximations: locally {L}ipschitz vector
  fields and degenerate noise.
\newblock {\em Stochastic Process. Appl.}, 101(2):185--232, 2002.

\bibitem{meyn:tweedie:2009}
S.~Meyn and R.~Tweedie.
\newblock {\em {M}arkov Chains and Stochastic Stability}.
\newblock Cambridge University Press, New York, NY, USA, 2nd edition, 2009.

\bibitem{meyn:tweedie:1993:3}
S.~P. Meyn and R.~L. Tweedie.
\newblock {Stability of Markovian Processes III: Foster-Lyapunov Criteria for
  Continuous-Time Processes}.
\newblock {\em Advances in Applied Probability}, 25(3), 1993.

\bibitem{meyn:tweedie:1993:III}
S.~P. Meyn and R.~L. Tweedie.
\newblock Stability of {M}arkovian processes. {III}. {F}oster-{L}yapunov
  criteria for continuous-time processes.
\newblock {\em Adv. in Appl. Probab.}, 25(3):518--548, 1993.

\bibitem{nesterov:2004}
Y.~Nesterov.
\newblock {\em Introductory Lectures on Convex Optimization: A Basic Course}.
\newblock Applied Optimization. Springer, 2004.

\bibitem{parisi:1981}
G.~Parisi.
\newblock Correlation functions and computer simulations.
\newblock {\em Nuclear Physics B}, 180:378--384, 1981.

\bibitem{roberts:tweedie:1996}
G.~O. Roberts and R.~L. Tweedie.
\newblock Exponential convergence of {L}angevin distributions and their
  discrete approximations.
\newblock {\em Bernoulli}, 2(4):341--363, 1996.

\bibitem{roberts:tweedie:2000}
G.~O. Roberts and R.~L. Tweedie.
\newblock Rates of convergence of stochastically monotone and continuous time
  {M}arkov models.
\newblock {\em J. Appl. Probab.}, 37(2):359--373, 2000.

\bibitem{talay:tubaro:1991}
D.~Talay and L.~Tubaro.
\newblock Expansion of the global error for numerical schemes solving
  stochastic differential equations.
\newblock {\em Stochastic Anal. Appl.}, 8(4):483--509 (1991), 1990.

\end{thebibliography}

\end{document}